\begin{document}

\newtheorem{Dfn}{Definition}
\newtheorem{Theo}{Theorem}
\newtheorem{Lemma}[Theo]{Lemma}
\newtheorem{Prop}[Theo]{Proposition}
\newtheorem{Coro}[Theo]{Corollary}
\newtheorem{Rem}[Theo]{Remark}

\title{Multiplicative loops of $2$-dimensional topological quasifields}
\author{Giovanni Falcone, \'Agota Figula and Karl Strambach}
\date{}
\maketitle

\begin{abstract}
We determine the algebraic structure of the multiplicative loops for locally compact $2$-dimensional topological connected quasifields.
In particular, our attention turns to multiplicative loops which have either a normal subloop of positive dimension or which contain a $1$-dimensional compact subgroup. In the last section we determine explicitly the quasifields which coordinatize locally compact translation planes of  dimension $4$ admitting an at least $7$-dimensional Lie group as collineation group.
\end{abstract}

\noindent
{\small {\bf Keywords:} Multiplicative loops of locally compact quasifields, sections in Lie groups, collineation groups, translation planes and speads}

\noindent
{\small {\bf 2010 Mathematics Subject Classification:} 20N05, 22A30, 12K99, 51A40, 57M60}

\bigskip
\centerline{\bf 1. Introduction}

\bigskip
\noindent
Locally compact connected topological non-desarguesian translation planes have been a popular subject of geometrical research since the seventies of the last century (\cite{salzmann}, \cite{betten1}-\cite{betten7}, \cite{knarr}, \cite{ortleb}). These planes are coordinatized by locally compact quasifields $Q$ such that the kernel of $Q$ is either the field $\mathbb R$ of real numbers or the field
$\mathbb C$ of complex numbers (cf. \cite{grundhoefer}, IX.5.5 Theorem, p. 323). If the quasifield $Q$ is $2$-dimensional, then its kernel is
$\mathbb R$.

The classification of  topological  translation planes $\mathcal{A}$ was accomplished by reconstructing the spreads  corresponding to $\mathcal{A}$ from the translation complement which is the stabilizer of a point in the collineation group of $\mathcal{A}$. In this way all planes $\mathcal{A}$ having an at least $7$-dimensional collineation group have been determined (\cite{betten}-\cite{betten6}, \cite{ortleb}).

Although any spread gives the lines through the origin and hence the multiplication in a $2$-dimensional quasifield $Q$ coordinatizing the plane $\mathcal{A}$, to the algebraic structure of the multiplicative loop $Q^{\ast }$ of a proper quasifield $Q$  is not given special attention apart from the facts that the group topologically generated by the left translations of $Q^{\ast }$ is the connected component  of
${\mathrm{GL}}_2(\mathbb R)$, the group topologically generated by the right translations of $Q^{\ast }$ is an infinite-dimensional Lie group
(cf. \cite{loops}, Section 29, p. 345) and any locally compact $2$-dimensional semifield is the field of complex numbers 
(\cite{strambachplaumann}).

Since in the meantime some progress in the classification of compact differentiable loops on the $1$-sphere has been achieved (cf. \cite{figulastrambach}), we believe that loops could have more space in the research concerning $4$-dimensional translation planes.
Using the images of differentiable sections $\sigma :G/H \to G$,  where ${\textstyle H=\left\{ \left( \begin{array}{cc}
a & b \\
0 & a^{-1} \end{array} \right), a> 0, b \in \mathbb R \right\}}$, we classify the $C^1$-differentiable multiplicative loops $Q^{\ast }$ of $2$-dimensional locally compact quasifields $Q$ by functions, the Fourier series of which are described in \cite{figulastrambach}.

The multiplicative loops $Q^{\ast }$ of $2$-dimensional locally compact left quasifields $Q$ for which the set of the left translations of  $Q^{\ast }$ is the product ${\mathcal T} {\mathcal K}$ with $|{\mathcal T} \cap {\mathcal K}| \le 2$, where ${\mathcal T}$ is the set of the left translations of a $1$-dimensional compact loop and ${\mathcal K}$ is the set of the left translations of $Q^{\ast }$ corresponding to the kernel $K_r$ of $Q$, form an important subclass of loops, that we call decomposable loops.
Namely, if $Q^{\ast }$ has a normal subloop of positive dimension or if it contains the group ${\mathrm{SO}}_2(\mathbb R)$, then $Q^{\ast }$ is decomposable. Moreover, we show that any $1$-dimensional $C^1$-differentiable compact loop is a factor of a decomposable  multiplicative loop of a locally compact connected quasifield coordinatizing a $4$-dimensional translation plane. A $2$-dimensional locally compact quasifield $Q$ is the field of complex numbers if and only if the multiplicative loop $Q^{\ast }$ contains a $1$-dimensional normal compact subloop.

Till now mainly those simple loops have been  studied for which the  group generated by their left translations is a simple group. If the group generated by the left translations of a loop $L$ is simple, then $L$ is also simple
(cf. Lemma 1.7 in \cite{loops}).
The multiplicative loops  $Q^{\ast }$ of $2$-dimensional locally compact quasifields show that there are many interesting $2$-dimensional locally compact quasi-simple loops for which the group generated by their left translations has a one-dimensional centre.

In the last section we use Betten's classification to determine in our framework the multiplicative loops $Q^{\ast }$ of the quasifields which coordinatize the $4$-dimensional non-desarguesian translation planes $\mathcal{A}$ admitting an at least seven-dimensional collineation group and to  study their properties. The results obtained there yield the following

\noindent
{\bf Theorem} {\it Let $\mathcal{A}$ be a  $4$-dimensional locally compact non-desarguesian translation plane which admits an at least $7$-dimensional collineation group $\Gamma$. If the quasifield $Q$ coordinatizing  $\mathcal{A}$  is constructed with respect to two lines such that their intersection points with the line at infinity are contained in the $1$-dimensional orbit of $\Gamma $ or contain the set of the fixed points of $\Gamma$, then the multiplicative loop $Q^{\ast }$ of  $Q$ is  decomposable if and only if one of the following cases occurs:\\
\noindent
(a) $\Gamma$ is $8$-dimensional, the translation complement $C$ is the group ${\mathrm{GL}}_2(\mathbb R)$ and acts reducibly on the translation group $\mathbb R^4$;
\newline
\noindent
(b) $\Gamma$ is $7$-dimensional, the translation complement $C$ fixes two distinct lines of $\mathcal{A}$ and leaves on one of them, one or two $1$-dimensional subspaces invariant;
\newline
\noindent
(c) $\Gamma$ is $7$-dimensional, the translation complement $C$ fixes two distinct lines $\{ S, W \}$ through the origin and acts transitively on the spaces $P_S$ and $P_W$ but does not act transitively on the product space $P_S \times P_W$, where
$P_S$ and $P_W$ are the sets of all $1$-dimensional subspaces of $S$, respectively of $W$. }

\bigskip
\centerline{\bf 2. Preliminaries}

\bigskip
\noindent
A binary system $(L, \cdot )$ is called a quasigroup if for any given $a, b \in L$ the
equations $a \cdot y=b$ and $x \cdot a=b$ have unique solutions which we denote by $y=a \backslash b$ and $x=b/a$. If a quasigroup $L$ has an element $1$ such that $x=1 \cdot x=x \cdot 1$ holds for all $x \in L$, then it is called a loop and $1$ is the identity element of $L$.  The left translations
$\lambda _a: L \to L, x \mapsto a \cdot x$ and the right translations $\rho _a: L \to L, x \mapsto x \cdot a$, $a \in L$, are bijections of $L$.
Two loops $(L_1, \circ)$ and $(L_2, \ast )$ are called isotopic if there exist three bijections
$\alpha , \beta , \gamma :L_1 \to L_2$ such that
$\alpha (x) \ast \beta (y)= \gamma (x \circ y)$ holds for all $x,y \in L_1$.
A binary system $(K, \cdot )$ is called a subloop of $(L, \cdot )$ if $K \subset L$, for any given $a, b \in K$ the
equations $a \cdot y=b$ and $x \cdot a=b$ have unique solutions in $K$ and $1 \in K$. The kernel of a homomorphism $\alpha :(L, \cdot ) \to (L', \ast )$ of a loop $L$ into a loop $L'$ is a normal subloop $N$ of $L$, i.e. a subloop of $L$ such that
\begin{equation} \label{normalloop} x \cdot N=N \cdot x, \ (x \cdot N) \cdot y= x \cdot (N \cdot y), \ (N \cdot x) \cdot y =N \cdot (x \cdot y)  \end{equation}
hold for all $x,y \in L$. A loop $L$ is called simple if $\{ 1 \}$ and $L$ are its only normal subloops. A loop $L$ is proper if it is not a group. 

\noindent
A loop $L$ is called topological, if it is a topological space and the binary operations $(a,b) \mapsto a \cdot b$,
$(a,b) \mapsto a \backslash b$, $(a,b) \mapsto b/a: L \times L \to L$ are continuous. Then the left and right translations of $L$ are homeomorphisms of $L$. If $L$ is a connected differentiable manifold such that the loop multiplication and the left division are continuously differentiable mappings, then we call $L$ an almost $\mathcal{C}^1$-differentiable loop. If also the right division of $L$ is continuously differentiable, then $L$ is a $\mathcal{C}^1$-differentiable loop.
A connected topological loop is quasi-simple if it contains no normal subloop of positive dimension.

\noindent
Every topological, respectively almost $\mathcal{C}^1$-differentiable, connected loop $L$ having a Lie group $G$ as the group topologically generated by the left translations of $L$ corresponds to a sharply transitive continuous, respectively  $\mathcal{C}^1$-differentiable section
$\sigma :G/H \to G$, where $G/H=\{ x H| x \in G\}$ consists of the left cosets of the stabilizer $H$
of $1 \in L$ such that $\sigma(H)=1_G$ and $\sigma (G/H)$ generates $G$.
The section $\sigma $ is sharply transitive if the image $\sigma (G/H)$ acts sharply transitively on the factor space $G/H$, i.e. for given left cosets $x H$, $y H$ there exists precisely one $z \in \sigma (G/H)$ which satisfies the equation $z x H=y H$.

A (left) quasifield  is an algebraic structure $(Q,+, \cdot )$ such that $(Q,+)$ is an abelian group with neutral element $0$,
$(Q \setminus \{ 0 \}, \cdot )$ is a loop with identity element $1$ and between these operations the (left) distributive law
$x \cdot (y+z) = x \cdot y + x \cdot z $ holds. A locally compact connected topological quasifield is a locally compact connected topological space $Q$ such that $(Q,+)$ is a topological group, $(Q \setminus \{ 0 \}, \cdot )$ is a topological loop, the multiplication $\cdot :Q \times Q \to Q$ is continuous and the mappings $\lambda_a: x \mapsto a \cdot x$ and $\rho _a: x \mapsto x \cdot a$ with $0 \neq a \in Q$ are homeomorphisms of $Q$.
If for any given $a,b,c \in Q$ the equation $x \cdot a  +  x \cdot b =c$ with $a+b \neq 0$ has  precisely one solution, then $Q$ is called planar. A translation plane is an affine plane with transitive group of translations; this is coordinatized by a planar quasifield (cf. \cite{pickert}, Kap. 8).

The kernel $K_r$ of a (left) quasifield $Q$ is a skewfield defined by
\begin{equation} \label{kernel}
K_r=\{k \in Q; \ (x+y) \cdot k = x \cdot k+ y \cdot k \ \hbox{and} \ (x \cdot y) \cdot k= x \cdot (y \cdot k) \ \hbox{for all} \ x,y \in Q\}. \nonumber \end{equation}

In this paper we consider left quasifields $Q$. Then $Q$ is a right vector space over $K_r$. Moreover, for all $a \in Q$ the map
$\lambda _a: Q \to Q, x \mapsto a \cdot x$ is $K_r$-linear.
According to \cite{hughes}, Theorem 7.3, p. 160, every quasifield that has finite dimension over its kernel is planar.

Let $F$ be a skewfield and let $V$ be a vector space over $F$. A collection ${\cal B}$ of subspaces of $V$ with $|{\cal B}| \ge 3$ is called a spread of $V$ if for any two different elements $U_1$, $U_2 \in {\cal B}$ we have $V= U_1 \oplus U_2$ and every vector of $V$ is contained in an element of ${\cal B}$.

If $S$ and $W$ are different subspaces of the spread ${\cal B}$, then $V$ can be coordinatized in such a way that 
$S= \{ 0 \} \times X$
 and $W=X \times \{ 0 \}$.  Any spread of $V=X \times X$ can be described by a collection ${\mathcal M}$ of linear mappings
$X \to X$ satisfying the following conditions:
\newline
\noindent
$(M_1)$ For any $\omega_1 \neq \omega_2 \in {\mathcal M}$ the mapping $\omega_1- \omega_2$ is bijective.
\newline
\noindent
$(M_2)$ For all $x \in X \setminus \{ 0 \}$ the mapping $\phi_x: {\mathcal M} \to X: \omega \mapsto \omega(x)$ is surjective.
\newline
\noindent
Namely, if ${\mathcal M}$ is a collection of linear mappings satisfying $(M_1)$ and $(M_2)$, then the sets 
$U_{\omega }=\{(x, \omega(x)), x \in X \}$ and $\{0 \} \times X$ yield a spread of $V=X \times X$. Conversely, every component $U \in {\cal B} \setminus \{ S \}$ of $V$ is the graph of a linear mapping $\omega_U: W \to S$ and the set of 
$\omega_U$ gives a collection  ${\mathcal M}$ of linear mappings of $X$ satisfying $(M_1)$ and $(M_2)$ 
(cf. \cite{knarr}, Proposition 1.11.). The mapping $\omega_W$ is the zero mapping.
For this reason any collection ${\mathcal M}$ of linear mappings of $X$ which satisfy $(M_1)$ and $M_2$ is called a spread set of $X$.
\newline
\noindent
Every translation plane can be obtained from a spread set of a suitable vector space $V=X \times X$ (cf. \cite{knarr}, Theorem 1.5, p. 7, and \cite{andre}). As every translation plane can be coordinatized by a quasifield and a quasifield contains $0$ and $1$, the associated spread set contains the zero endomorphism and the identity map.
This is not true for arbitrary spread sets ${\mathcal M}$, but if $\omega_0, \omega_1 \in {\mathcal M}$ are distinct, then
${\mathcal M}'=\{ (\omega -\omega_0)(\omega_1- \omega_0)^{-1}, \ \omega \in {\mathcal M}\}$ is  a normalized spread set of $X$, i.e. a spread set which contains the zero and the identity map. The translation planes obtained from 
${\mathcal M}$ and ${\mathcal M}'$ are isomorphic (cf. \cite{knarr}, Lemma 1.15, p. 13). 
Let ${\mathcal M}$ be a normalized spread set of $X$, $e \in X \setminus \{ 0 \}$ and let $\phi_e: {\mathcal M} \to X$ be defined by $\phi_e(\omega )= \omega(e)$. Then the multiplication $\circ: X \times X \to X$ defined by
$m \circ x= (\phi_e^{-1}(m))(x)$ yields a multiplicative loop of a left quasifield $Q$ coordinatizing the translation plane ${\cal A}$ belonging to the spread ${\mathcal M}$ of $X$.

\noindent
If we fix a basis of $Q$ over its kernel $K_r$ and identify $X$ with the vector space of pairs 
$\{(x,y)^t, \ x,y \in K_r \}$, then  the set
${\mathcal M}$ consists of matrices $C(\alpha, \beta, \gamma, \delta)=\left(\begin{array}{cc}
\alpha & \beta \\
\gamma & \delta \end{array} \right), \alpha, \beta, \gamma, \delta \in K_r$. If $e=(1,0)^t$, then we get $\phi_e( C(\alpha, \beta, \gamma,  \delta))=C(\alpha, \beta, \gamma, \delta)(e)=(\alpha, \gamma)^t$. Since ${\mathcal M}$ is a spread of $X$ the set of vectors
$(\alpha, \gamma)^t$ consists of all vectors of $X$. Hence if $(\alpha, \gamma)^t$ is an element of $X$, then there exists a unique matrix of ${\mathcal M}$ having $(\alpha, \gamma)^t$ as the first column.

We consider multiplicative loops of
locally compact connected  topological quasifields $Q$ of dimension $2$ coordinatizing $4$-dimensional non-desarguesian topological translation planes.
Then the kernel $K_r$ of $Q$
is isomorphic to the field of the real numbers, $(Q, +)$ is the vector group $\mathbb R^2$ and the multiplicative loop
$(Q \backslash \{ 0 \}, \cdot )$ is  homeomorphic to $\mathbb R \times S^1$,  where $S^1$ is the circle.

\bigskip
\noindent
\centerline{ \bf 4. Multiplicative loops of $2$-dimensional quasifields}

\bigskip
\noindent
Let $(Q,+, \ast )$ be a real topological (left) quasifield of dimension $2$. Let $e_1$ be the identity element of
the multiplicative loop $Q^{\ast }=(Q \setminus \{ 0 \}, \ast )$ of $Q$, which generates the kernel $K_r= \mathbb R$ of 
$Q$ as a vector space and let $B=\{e_1,e_2\}$ be a basis of the right vector space $Q$ over $K_r$. Once we fix $B$, we identify $Q$ with the vector space of pairs $(x,y)^t \in \mathbb R^2$
and $K_r$ with the subspace of pairs
$(x, 0)^t$. The element $(1, 0)^t$ is the identity element of $Q^{\ast }$.
According to \cite{loops}, Theorem 29.1, p. 345, the group $G$ topologically generated by the left translations of 
$Q^{\ast }$ is the connected component of the group ${\mathrm{GL}}_2(\mathbb R)$. As $\hbox{dim} \ Q^{\ast }=2$ and the stabilizer of the identity element of  $Q^{\ast }$ in $G$ does not contain any non-trivial normal subgroup  of $G$  we 
may replace the stabilizer of the identity 
by the subgroup
$H=\left\{ \left( \begin{array}{cc}
a & b \\
0 & a^{-1} \end{array} \right), a>0, b \in \mathbb R \right\}$.
The elements $g$ of $G$ have a unique decomposition as the product
\begin{equation}  g= \left( \begin{array}{cc}
u \cos t & u \sin t \\
-u \sin t & u \cos t \end{array} \right) \left( \begin{array}{cc}
k & l \\
0 & k^{-1} \end{array} \right) \nonumber \end{equation}  with suitable elements 
$u >0$, $k >0$,
$l \in \mathbb R$,
$t \in [0, 2 \pi )$. Hence the loop $Q^{\ast }$ corresponds to a continuous section
$\sigma : G/H \to G$;
\begin{equation}  \label{section} \left( \begin{array}{cc}
u \cos t & u \sin t \\
-u \sin t & u \cos t \end{array} \right) H \mapsto \left( \begin{array}{cc}
u \cos t & u \sin t \\
-u \sin t & u \cos t \end{array} \right) \left( \begin{array}{cc}
a(u, t) & b(u, t) \\
0 & a^{-1}(u, t) \end{array} \right) \end{equation}
\noindent
where the pair of continuous functions $a(u, t), b(u, t): \mathbb R_{>0} \times \mathbb [0,2 \pi) \to \mathbb R$, where 
$\mathbb R_{>0} $ is the set of positive numbers, satisfies the following conditions:
\begin{equation} a(u,t)>0, \ \ a(1,0)=1, \ \ b(1,0)=0. \nonumber \end{equation}

As $Q$ is a left quasifield, any $(x, y )^t \in Q^{\ast }$ induces a linear transformation $M_{(x,y)}\in \sigma(G/H)$. More precisely one has {\tiny \begin{equation}\label{quasifieldmultiplication} \left( \begin{array}{c}
x\\
y
\end{array} \right)* \left( \begin{array}{c}
u\\
v
\end{array} \right)=M_{(x,y)}\left( \begin{array}{c}
u\\
v
\end{array} \right)=
\left( \begin{array}{cc}
r \cos \varphi & r \sin \varphi \\
-r \sin \varphi & r \cos \varphi \end{array} \right)  \left( \begin{array}{cc}
a(r, \varphi ) & b(r, \varphi) \\
0 & a^{-1}(r, \varphi) \end{array} \right) \left(\begin{array}{c}
u\\
v
\end{array} \right), \end{equation}}
\noindent
where $x= r \cos(\varphi ) a(r, \varphi )$, \ $y=-r \sin(\varphi ) a(r, \varphi )$.
The kernel $K_r$ of $Q$ consists of $(0,0)^t$ and $(r \cos(k \pi) a(r,k \pi), 0)^t$, $r >0$, $k \in \{0,1\}$, such that the matrices corresponding to the elements $(r \cos(k \pi) a(r,k \pi), 0)^t$ have the form
\begin{equation} \label{quasifieldmultiplicationmas}
M(r \cos(k \pi) a(r,k \pi),0)=  \left( \begin{array}{cc}
r \cos(k \pi) a(r,k \pi) &  r \cos(k \pi) b(r,k \pi) \\
0 &  r \cos(k \pi) a^{-1}(r,k \pi) \end{array} \right).  \nonumber  \end{equation} 
The identity matrix $I$ corresponds to the identity $(1,0)^t$ of $Q^{\ast }$.
As to each real number $r \cos(k \pi) a(r,k \pi)$ belongs 
precisely one matrix 
\begin{equation} \label{matrixm} M(r \cos(k \pi) a(r,k \pi),0) \nonumber \end{equation}  the functions $f_1(r)=r a(r,0), r >0$ and $f_2(r)=-r a(r, \pi ), r >0$ are strictly monotone. If the functions $a(r,0)$, 
$a(r, \pi )$ are differentiable, then for every $r >0$ the derivatives $a(r,0) + r a'(r,0)$ and 
$-a(r, \pi )-r a'(r, \pi )$ are either always  positive or negative. This is equivalent to the fact that the derivatives 
$[\ln (a(r,0))]'$, $[\ln (a(r, \pi ))]'$ are always greater or smaller than $-r^{-1}$.

\begin{Rem} \label{diagonaluj}
The set ${\mathcal K}=\{ M(r \cos(k \pi) a(r,k \pi),0); r >0, k \in \{0,1\} \}$ of the left translations of $Q^{\ast }$ corresponding to the kernel $K_r$ of $Q$ is $$\left\{ \left( \begin{array}{cc}
r & 0 \\
0 & r \end{array} \right), r \in \mathbb R \setminus \{ 0 \} \right\}$$ if and only if one has $a(r,k \pi)=1$, 
$b(r,k \pi)=0$ for all $r >0$, $k \in \{0,1\}$.
\end{Rem}

The section $\sigma $ given by (\ref{section}) is sharply transitive precisely if for all pairs $(u_1, t_1)$, 
$(u_2, t_2)$ in
$\mathbb R_{>0} \times [0, 2 \pi )$ there exists precisely one $(u, t) \in \mathbb R_{>0} \times [0, 2 \pi )$ and $k>0$,
$l \in \mathbb R$ such that
\begin{equation} \label{equationmasodikuj} \left( \begin{array}{cc}
u \cos t & u \sin t \\
-u \sin t & u \cos t \end{array} \right)  \left( \begin{array}{cc}
a(u, t) & b(u, t) \\
0 & a^{-1}(u, t) \end{array} \right)  \left( \begin{array}{cc}
u_1 \cos t_1 & u_1 \sin t_1 \\
-u_1 \sin t_1 & u_1 \cos t_1 \end{array} \right)= \nonumber \end{equation}
\begin{equation} \left( \begin{array}{cc}
u_2 \cos t_2 & u_2 \sin t_2 \\
-u_2 \sin t_2 & u_2 \cos t_2 \end{array} \right) \left( \begin{array}{cc}
k & l \\
0 & k^{-1} \end{array} \right). \end{equation}
As the determinant of the matrices on both sides of (\ref{equationmasodikuj}) are equal we get that $u = u_1^{-1} u_2$.
Therefore the system (\ref{equationmasodikuj}) of equations is uniquely solvable if and only if for any fixed
$u >0$ the mapping
\begin{equation} \label{sectionu} \sigma _u: \left( \begin{array}{cc}
\cos t & \sin t \\
-\sin t & \cos t \end{array} \right) H \mapsto \left( \begin{array}{cc}
\cos t & \sin t \\
-\sin t & \cos t \end{array} \right)  \left( \begin{array}{cc}
a(u, t) & b(u, t) \\
0 & a^{-1}(u, t) \end{array} \right) \nonumber \end{equation}
determines a quasigroup $F_u$  homeomorphic to $S^1$.
One may take as the points of $F_u$ the vectors $(u a(u, t) a^{-1}(u, 0) \cos t, -u a(u, t) a^{-1}(u, 0) \sin t)^t$ and as the section the mapping
{\scriptsize \begin{equation} \label{sectionuuj} \sigma _u:  \left( \begin{array}{c}
 u a(u, t) a^{-1}(u, 0) \cos t \\
 -u a(u, t)a^{-1}(u, 0) \sin t \end{array} \right) \mapsto
\left( \begin{array}{cc}
\cos t & \sin t \\
-\sin t & \cos t \end{array} \right)  \left( \begin{array}{cc}
a(u, t) a^{-1}(u,0) & b(u, t) \\
0 & a^{-1}(u, t) a(u,0) \end{array} \right) = \nonumber \end{equation}
\begin{equation} \left( \begin{array}{cc}
a(u, t) a^{-1}(u, 0) \cos t & b(u,t) \cos t + a^{-1}(u, t) a(u, 0) \sin t \\
-a(u, t) a^{-1}(u, 0) \sin t & - b(u,t) \sin t +  a^{-1}(u, t) a(u, 0) \cos t \end{array} \right).
 \end{equation}}
In this way we see that the quasigroup $F_u$ has the right identity $(u, 0)^t$ since
{\scriptsize \begin{equation} \sigma _u \left( \begin{array}{c}
 u a(u, t) a^{-1}(u, 0) \cos t\\
 -u a(u, t)a^{-1}(u, 0) \sin t \end{array} \right) \cdot \left( \begin{array}{c} u\\ 0 \end{array} \right ) = \left( \begin{array}{c}
 u a(u, t) a^{-1}(u, 0) \cos t\\
 -u a(u, t)a^{-1}(u, 0) \sin t \end{array} \right). \nonumber \end{equation} }
The quasigroup $F_u$ is a loop, i.e. $(u, 0)^t$ is the left identity of $F_u$, if and only if
{\scriptsize $$\sigma _u \left( \begin{array}{c}
 u \\
0\end{array} \right)  = \left( \begin{array}{cc}
a(u, 0) a^{-1}(u, 0) \cos 0 & b(u,0) \cos 0  \\
0 &   a^{-1}(u, 0) a(u, 0) \cos 0 \end{array} \right) = \left( \begin{array}{cc}
1 & b(u,0)   \\
0 &  1\end{array} \right) =  \left( \begin{array}{cc}
1 & 0   \\
0 &  1\end{array} \right)$$}
\noindent
which means $b(u,0) = 0$ for all $u >0$. The almost ${\mathcal C}^1$-differentiable loop $Q^{\ast }$ belonging to the sharply transitive
${\mathcal C}^1$-differentiable section $\sigma $ given by (\ref{section}) is ${\mathcal C}^1$-differentiable precisely if the mapping
$(xH, yH) \mapsto z: G/H \times G/H \to \sigma(G/H)$ determined by $z xH=yH$ is
${\mathcal C}^1$-differentiable (cf. \cite{loops}, p. 32), i.e.
the solutions $u >0$, $t \in [0, 2 \pi )$ of the matrix equation  (\ref{equationmasodikuj}) are continuously differentiable functions of $u_1, u_2 \in \mathbb R_{>0}$, 
$t_1, t_2 \in [0,2 \pi)$. The function $u=u_1^{-1} u_2$ is continuously differentiable. If for each fixed
$u >0$ the section $\sigma_u$ given by (\ref{sectionuuj}) yields a $1$-dimensional 
$\mathcal{C}^1$-differentiable compact loop, then 
the function
$t(u_1,u_2,t_1,t_2)=t_{(u_1,u_2)}(t_1,t_2)$ is continuously differentiable (cf. \cite{loops}, Examples 20.3, p. 258). Indeed, the function $t_{(u_1,u_2)}(t_1,t_2)$ is determined implicitly by equations which depend continuously differentiably also on the parameters $u_1$ and $u_2$. 
Applying the above discussion we can prove the following:

\begin{Theo} \label{compactkernel}
Let $Q^{\ast }$ be the ${\mathcal C}^1$-differentiable multiplicative loop of a locally compact $2$-dimensional connected topological quasifield $Q$.
Then $Q^{\ast }$ is diffeomorphic to $S^1 \times \mathbb R$ and belongs to a ${\mathcal C}^1$-differentiable sharply transitive section $\sigma $ of the form
\begin{equation}  \left( \begin{array}{cc}
u \cos t & u \sin t \\
-u \sin t & u \cos t \end{array} \right) H \mapsto \left( \begin{array}{cc}
u \cos t & u \sin t \\
-u \sin t & u \cos t \end{array} \right) \cdot \left( \begin{array}{cc}
a(u, t) & b(u, t) \\
0 & a^{-1}(u, t) \end{array} \right), \nonumber \end{equation} with $b(u, 0)=0$ for all $u >0$
if and only if
for each fixed $u >0$ the function $a_u^{-1}(t) := a(u,0)a^{-1}(u, t)$ has the shape

\medskip
\noindent
\begin{equation}
a_u^{-1}(t)=  e^{t }(1- \int \limits _0^{t} R(s) e^{-s} \ ds )  \nonumber \end{equation}
\noindent
where $R(s)$ is a continuous function,
the Fourier series of which is contained in the set ${\cal F}$ of Definition 1 in \cite{figulastrambach} and converges
uniformly to $R$. Moreover, $b_u(t):=b(u, t)$ is a periodic ${\cal C}^1$-differentiable
function with $b_u(0)=b_u(2 \pi )=0$ such that

\noindent
\begin{equation}
b_u(t)> -a_u(t)  \int \limits _0^t \frac{(a^2_u(s)- a'^2_u(s))}{a^4_u(s)} \ d s \ \ \hbox{for \ all} \ \ 
t \in (0, 2 \pi ).  \nonumber \end{equation}
\end{Theo}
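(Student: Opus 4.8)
The diffeomorphism $Q^{\ast }\cong S^1\times\mathbb R$ is immediate, since $(Q,+)=\mathbb R^2$ and $Q^{\ast }=\mathbb R^2\setminus\{0\}$; the substance of the statement lies elsewhere. The plan is first to peel off the radial direction and then to recognize the remaining circle factor as one of the $1$-dimensional compact loops classified in \cite{figulastrambach}. Comparing determinants in (\ref{equationmasodikuj}) forces $u=u_1^{-1}u_2$, so the dilation group $\mathbb R_{>0}$ splits off; and the computation preceding the statement shows that the circle quasigroup $F_u$ of (\ref{sectionuuj}) has left identity $(u,0)^t$ precisely when $b(u,0)=0$. Together these reduce the sharp transitivity of $\sigma$ to that of $\sigma_u$ for each fixed $u>0$ and fix the normalization $b(u,0)=0$.

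Next I would analyse the circle factor group-theoretically. A determinant computation shows that every matrix in the image of $\sigma_u$ lies in $\mathrm{SL}_2(\mathbb R)$, that $H$ is precisely the stabilizer in $\mathrm{SL}_2(\mathbb R)$ of the ray $\mathbb R_{>0}(1,0)^t$, and that $\mathrm{SL}_2(\mathbb R)/H\cong S^1$. Hence for each fixed $u$ the loop $F_u$ is a $1$-dimensional compact $\mathcal C^1$-differentiable loop whose left translations generate $\mathrm{SL}_2(\mathbb R)$, so the classification of \cite{figulastrambach} applies. Writing $a_u(t)=a(u,t)a^{-1}(u,0)$, I would match the shape (\ref{sectionuuj}) to the canonical description there; this identifies the datum of $F_u$ with the function $R(t)=a_u^{-1}(t)-\frac{d}{dt}a_u^{-1}(t)$, whose admissible Fourier series are exactly those in $\mathcal F$. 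Solving the linear equation $y'-y=-R$ under the initial condition $a_u^{-1}(0)=1$ (which holds because $a_u(0)=1$) reproduces
\[
a_u^{-1}(t)=e^{t}\Bigl(1-\int_0^t R(s)e^{-s}\,ds\Bigr),
\]
and uniform convergence of the Fourier series transports $\mathcal C^1$-regularity in both directions.

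It remains to characterize the shear $b_u(t)=b(u,t)$. Because $F_u$ lives on $S^1$, its parametrizing functions are $2\pi$-periodic and $\mathcal C^1$, which with $b(u,0)=0$ gives $b_u(0)=b_u(2\pi)=0$. For the inequality I would express sharp transitivity of $\sigma_u$ as the requirement that, for every coset $R(s)H$ with $v_s=(\cos s,-\sin s)^t$, the angle of the first column of $\sigma_u(t)R(s)$ be a strictly monotone function of $t$. Computing this angular velocity through the logarithmic derivative $A(t)=\sigma_u(t)^{-1}\sigma_u'(t)\in\mathfrak{sl}_2(\mathbb R)$, and using $\det\sigma_u(t)=1$ to write it as $\det\bigl(v_s,\,A(t)v_s\bigr)/|\sigma_u(t)v_s|^2$, the monotonicity for all $s$ becomes the negative-definiteness, for every $t$, of the quadratic form $s\mapsto\det\bigl(v_s,\,A(t)v_s\bigr)$; this reduces to the strict positivity of
\[
1+a_u(t)b_u'(t)-a_u'(t)b_u(t)-\frac{a_u'^2(t)}{a_u^2(t)},
\]
which is $(b_u/a_u)'+\dfrac{a_u^2-a_u'^2}{a_u^4}>0$ after dividing by $a_u^2$. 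Integrating from $0$, where $b_u(0)=0$, yields exactly
\[
b_u(t)>-a_u(t)\int_0^t\frac{a_u^2(s)-a_u'^2(s)}{a_u^4(s)}\,ds,\qquad t\in(0,2\pi).
\]
The converse then follows by reading these steps backwards: any $R\in\mathcal F$ with uniformly convergent Fourier series, together with a periodic $\mathcal C^1$ function $b_u$ satisfying the boundary conditions and the inequality, reconstitutes through (\ref{sectionuuj}) and $u=u_1^{-1}u_2$ a $\mathcal C^1$ sharply transitive section of the form (\ref{section}).

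I expect the main obstacle to be the last step: turning sharp transitivity into the explicit closed-form inequality requires controlling the angular velocity \emph{uniformly in the coset parameter} $s$ and isolating the extremal direction, and then checking that the resulting monotonicity condition is faithfully recorded by the integrated bound on $b_u$ with the initial value $b_u(0)=0$. A closely related delicate point is making the correspondence with \cite{figulastrambach} precise enough that the function $R=a_u^{-1}-(a_u^{-1})'$ is literally the one whose Fourier series is required to lie in $\mathcal F$, rather than merely an affine reparametrization of it.
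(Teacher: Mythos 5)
Your overall architecture matches the paper's: reduce sharp transitivity of $\sigma$ to that of the circle sections $\sigma_u$ via the determinant identity $u=u_1^{-1}u_2$, extract $b(u,0)=0$ from the left-identity computation for $F_u$, and then recognize $F_u$ as a $1$-dimensional compact $\mathcal C^1$ loop with left translation group ${\mathrm{SL}}_2(\mathbb R)$. The difference is that the paper's proof at this point is essentially a citation: it records the differential inequalities (\ref{diffinequality}) for $\bar a_u=a(u,0)a^{-1}(u,t)$, $\bar b_u=-b(u,t)$ from \cite{loops} and \cite{figulastrambach}, and then invokes Theorem 6 of \cite{figulastrambach} for the fact that the solution set of these inequalities is exactly the family described in the statement ($a_u^{-1}=e^t(1-\int_0^tR(s)e^{-s}ds)$ with $R$ governed by $\mathcal F$, plus the integrated bound on $b_u$). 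You instead rederive the differential inequality from an angular-velocity computation; your inequality $1+a_ub_u'-a_u'b_u-a_u'^2/a_u^2>0$ is, after the substitution $\bar a_u=a_u^{-1}$, $\bar b_u=-b_u$, precisely (\ref{diffinequality}), so the forward direction of your argument is a sound reconstruction of the cited material.

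The genuine gap is in your converse. You obtain the theorem's condition by integrating $(b_u/a_u)'>-\,(a_u^2-a_u'^2)/a_u^4$ from $0$ to $t$, and then assert that the converse "follows by reading these steps backwards". It does not: the integrated inequality is strictly weaker than the pointwise differential one. Already in the model case $a_u\equiv 1$ the differential condition is $b_u'>-1$ while the integrated condition is $b_u(t)>-t$; the periodic function $b_u(t)=\sin 2t$ satisfies $b_u(0)=b_u(2\pi)=0$ and $\sin 2t>-t$ on $(0,2\pi)$, yet $b_u'(\pi/2)=-2$. So a pair $(a_u,b_u)$ meeting all the conditions in the statement need not satisfy your pointwise inequality, and your chain of equivalences breaks at the integration step. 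Closing this requires exactly the content of Theorem 6 in \cite{figulastrambach} (which the paper imports wholesale): one must work with the global two-point form of sharp transitivity --- no element $\sigma_u(t_1)\sigma_u(t_2)^{-1}$, $t_1\neq t_2$, may fix an oriented line --- rather than only its infinitesimal version at coinciding parameters. A related, smaller imprecision: you identify sharp transitivity with everywhere-positive angular velocity, but strict monotonicity of a $\mathcal C^1$ circle map does not force a nonvanishing derivative; the strict pointwise inequality is needed for the $\mathcal C^1$-ness of the divisions (implicit function theorem), not for bijectivity, and this distinction is precisely what makes the two inequalities inequivalent.
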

\begin{proof}
The section $\sigma_u$ given by (\ref{sectionuuj}) yields a $1$-dimensional $\mathcal{C}^1$-differentiable compact loop having the group ${\mathrm{SL}}_2(\mathbb R)$ as the group topologically generated by its left translations if and only if for each fixed
$u >0$ the continuously differentiable functions $a(u,0) a^{-1}(u, t):={\bar a}_u(t)$,
$-b(u, t):={\bar b}_u(t)$ satisfy the conditions
\begin{equation} \label{diffinequality}
{\bar a}'^2_u (t)+{\bar b}_u(t) {\bar a}'_u(t)+ {\bar b}'_u(t) {\bar a}_u(t) -{\bar a}^2_u(t) <0, 
\ {\bar b}'_u(0)< 1- {\bar a}'^2_u(0)  \end{equation}
\noindent
(cf. \cite{loops}, Section 18, (C), p. 238, \cite{figulastrambach}, pp. 132-139).
The solution of the differential inequalities (\ref{diffinequality}) is given by Theorem 6 in \cite{figulastrambach}, 
pp. 138-139. This proves the assertion.
\end{proof}

\begin{Prop}
Let $Q^{\ast }$ be the ${\mathcal C}^1$-differentiable multiplicative loop of a locally compact $2$-dimensional connected topological quasifield $Q$.
Assume that for each fixed $u >0$ the function $a_u(t):=a^{-1}(u,0) a(u, t)$ is the constant function $1$ and
that $b(u, 0) =0$ is satisfied for all
$u >0$.
Then $Q^{\ast }$ belongs to a ${\mathcal C}^1$-differentiable sharply transitive section $\sigma $ of the form 
(\ref{section}) if and only if for each fixed $u >0$ one has $b_u(t):= b(u,t) > -t$ 
for all 
$0 < t < 2 \pi $.
\end{Prop}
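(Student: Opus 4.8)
The plan is to obtain this Proposition as the special case $a_u\equiv 1$ of Theorem~\ref{compactkernel} and its proof, since the hard analytic work is already encapsulated there. Recall that, by the reduction carried out before Theorem~\ref{compactkernel}, a section $\sigma$ of the form (\ref{section}) with $b(u,0)=0$ is $\mathcal{C}^1$-differentiable and sharply transitive exactly when, for each fixed $u>0$, the associated section $\sigma_u$ of (\ref{sectionuuj}) yields a $1$-dimensional $\mathcal{C}^1$-differentiable compact loop whose left translations topologically generate $\mathrm{SL}_2(\mathbb R)$; and this holds precisely when the differential inequalities (\ref{diffinequality}) are satisfied by $\bar a_u(t)=a(u,0)a^{-1}(u,t)$ and $\bar b_u(t)=-b(u,t)$. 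So I would simply feed the hypothesis $a_u\equiv 1$ into this equivalence.

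First I would carry out the specialization. Since $a_u(t)=a^{-1}(u,0)a(u,t)$ and $\bar a_u(t)=a(u,0)a^{-1}(u,t)$ are mutually reciprocal, the assumption $a_u\equiv 1$ is the same as $\bar a_u\equiv 1$, hence $\bar a_u'\equiv 0$; differentiating the shape $a_u^{-1}(t)=e^{t}\bigl(1-\int_0^t R(s)e^{-s}\,ds\bigr)=1$ shows this corresponds exactly to $R\equiv 1$. With $\bar a_u\equiv 1$ and $\bar a_u'\equiv 0$ the first inequality of (\ref{diffinequality}) degenerates to $\bar b_u'(t)<1$ and the second to $\bar b_u'(0)<1$, which is absorbed into it. Using $\bar b_u=-b_u$ and $\bar b_u(0)=-b(u,0)=0$, integration of $\bar b_u'(s)<1$ over $[0,t]$ gives $\bar b_u(t)<t$, that is $b_u(t)>-t$ for all $t\in(0,2\pi)$. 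This is precisely the inequality of Theorem~\ref{compactkernel} once one computes the integrand to be $\frac{a_u^2-a_u'^2}{a_u^4}\equiv 1$, so that $-a_u(t)\int_0^t 1\,ds=-t$; thus the necessity of $b_u(t)>-t$ is immediate.

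For the reverse implication I would not re-integrate (which only recovers necessity), but invoke, exactly as in the proof of Theorem~\ref{compactkernel}, the description of the full solution set of (\ref{diffinequality}) provided by Theorem~6 of \cite{figulastrambach}, specialized to $\bar a_u\equiv 1$: it identifies the admissible $b_u$ with the $\mathcal{C}^1$-periodic functions satisfying $b_u(0)=b_u(2\pi)=0$ and $b_u(t)>-t$ on $(0,2\pi)$. The main obstacle is precisely this sufficiency step, since passing from the integrated bound back to sharp transitivity of $\sigma_u$ is the genuine content; concretely it reduces to checking that the constant profile $R\equiv 1$ lies in the Fourier class $\mathcal F$ of Definition~1 in \cite{figulastrambach} (its trivial Fourier series converging uniformly), so that Theorem~\ref{compactkernel} and the cited Theorem~6 apply verbatim. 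Granting this, the equivalence ``$\sigma$ sharply transitive $\iff b_u(t)>-t$'' follows at once from Theorem~\ref{compactkernel}.
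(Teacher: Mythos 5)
Your argument is correct and follows essentially the same route as the paper: the paper likewise specializes the differential inequalities (\ref{diffinequality}) to $\bar a_u\equiv 1$, reduces to $\bar b_u'(t)<1$ with $\bar b_u'(0)<1$, and identifies the admissible functions with those satisfying $b_u(t)>-t$ on $(0,2\pi)$. Your explicit appeal to Theorem~6 of \cite{figulastrambach} for the sufficiency direction (and the check that $R\equiv 1$, resp.\ that the integrand in Theorem~\ref{compactkernel} reduces to $1$) only makes explicit what the paper's terse ``this is the case precisely if'' leaves implicit.
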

\begin{proof}
If for each fixed  $u > 0$ the function $a(u,0) a^{-1}(u, t)= a^{-1}_u(t)={\bar a}_u(t)$ is constant with value $1$, then
the section $\sigma_u$ given by (\ref{sectionuuj}) yields a ${\cal C}^1$-differentiable compact loop $L$  if and only if for each fixed 
$u >0$ the continuously differentiable function ${\bar b}_u(t):= -b_u(t)$ satisfies the differential inequality
${\bar b}'_u(t) < 1$ with the initial condition ${\bar b}'_u(0) < 1$
(cf. (\ref{diffinequality})). This is the case precisely if one has $b_u(t) >- t$ for all $0 < t < 2 \pi $.
\end{proof}

\begin{Prop}
Let $Q^{\ast }$ be the ${\mathcal C}^1$-differentiable multiplicative loop of a locally compact $2$-dimensional connected topological quasifield $Q$.
Assume that for each fixed $u > 0$ the function $b(u, t)$ is the constant function $0$.
Then $Q^{\ast }$ belongs to a ${\mathcal C}^1$-differentiable sharply transitive section $\sigma $ of the form 
(\ref{section}) precisely if for each fixed $u > 0$ one has 
$e^{-t}< a(u,t) a^{-1}(u,0) < e^{t}$ for all $0 < t < 2 \pi $.
\end{Prop}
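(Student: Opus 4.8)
The plan is to reduce the statement to the fibrewise criterion already used in the proof of Theorem \ref{compactkernel} and then to integrate the resulting differential inequality. First I would recall the reduction carried out just before Theorem \ref{compactkernel}: a section $\sigma$ of the form (\ref{section}) is $\mathcal C^1$-differentiable and sharply transitive precisely when, for each fixed $u>0$, the fibre section $\sigma_u$ of (\ref{sectionuuj}) yields a $1$-dimensional $\mathcal C^1$-differentiable compact loop on $S^1$ having $\mathrm{SL}_2(\mathbb R)$ as the group topologically generated by its left translations. By the criterion quoted there (cf. \cite{loops}, Section 18, (C), p. 238, and \cite{figulastrambach}, pp. 132--139) this is the case if and only if $\bar a_u(t)=a(u,0)a^{-1}(u,t)$ and $\bar b_u(t)=-b(u,t)$ satisfy the two inequalities (\ref{diffinequality}); the whole problem is thereby transferred to (\ref{diffinequality}).

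Next I would substitute the hypothesis $b(u,t)\equiv 0$, so that $\bar b_u\equiv 0$ and $\bar b_u'\equiv 0$. The first inequality in (\ref{diffinequality}) then reads $\bar a_u'(t)^2-\bar a_u(t)^2<0$ and the second reads $\bar a_u'(0)^2<1$. Since $\bar a_u(t)=a(u,0)/a(u,t)>0$ and $\bar a_u(0)=1$, the second inequality is merely the first one evaluated at $t=0$ and can be dropped, while the first is equivalent to $|\bar a_u'(t)|<\bar a_u(t)$, that is, to $\bigl|\frac{d}{dt}\ln\bar a_u(t)\bigr|<1$. Integrating this from $0$ and using $\ln\bar a_u(0)=0$ gives the strict estimates $-t<\ln\bar a_u(t)<t$ on $(0,2\pi)$, hence $e^{-t}<\bar a_u(t)<e^{t}$. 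Because the interval $(e^{-t},e^{t})$ is invariant under $x\mapsto x^{-1}$ and $\bar a_u(t)^{-1}=a(u,t)a^{-1}(u,0)$, this is precisely $e^{-t}<a(u,t)a^{-1}(u,0)<e^{t}$, which proves the implication from the section to the bound exactly as in the preceding proposition.

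The hard part will be the converse, which I expect to be the main obstacle. The integrated bound is genuinely weaker than the pointwise inequality $\bigl|\frac{d}{dt}\ln\bar a_u\bigr|<1$ carrying sharp transitivity, since in the interior of $(0,2\pi)$ the tube $(e^{-t},e^{t})$ is wide and a function can cross it with arbitrarily large logarithmic derivative; thus no elementary pointwise argument recovers (\ref{diffinequality}) from the bound. To obtain a genuine equivalence I would read it off the explicit description of the solutions of (\ref{diffinequality}): specialising the parametrised solution of \cite{figulastrambach}, Theorem 6, to the degenerate case $\bar b_u\equiv 0$ --- where the auxiliary datum becomes $R=\bar a_u-\bar a_u'$ --- and checking that the admissible functions $\bar a_u$ are described by exactly the stated estimate and no more. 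Justifying this identification, i.e. that the passage from the differential inequality to its integrated form is here an equivalence rather than a mere implication, is the delicate point, and the place where the argument must lean on the reference rather than on plain integration.
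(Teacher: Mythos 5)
Your forward direction is sound and agrees with the paper: with $\bar b_u\equiv 0$ the inequalities (\ref{diffinequality}) collapse to $(\bar a_u'(t)-\bar a_u(t))(\bar a_u'(t)+\bar a_u(t))<0$, and since $\bar a_u>0$ only the case $|\bar a_u'|<\bar a_u$ survives (the paper spells out the two sign cases and discards the second as vacuous; your formulation $\bigl|\tfrac{d}{dt}\ln\bar a_u\bigr|<1$ handles this automatically), after which integration from $t=0$ gives $e^{-t}<\bar a_u(t)<e^{t}$ and hence, by the symmetry of the tube under inversion, the stated bound on $a(u,t)a^{-1}(u,0)$.

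The converse, however, is a genuine gap in your proposal, and you are right to flag it as the obstacle: you never produce an argument, only the hope that Theorem 6 of \cite{figulastrambach} specializes to the degenerate case $\bar b_u\equiv 0$ in exactly the right way. The paper closes this direction by a different device: it asserts, citing the theory of sub- and upper functions in \cite{walter}, p.~66, that $\bar a_u$ determines a compact loop \emph{if and only if} it is a subfunction of the solution of $h'=h$, $h(0)=1$, and an upper function of the solution of $l'=-l$, $l(0)=1$ --- i.e.\ precisely the two-sided exponential bound --- so its converse rests entirely on that one ``if and only if'' rather than on the Fourier-series description you propose to invoke. Your scepticism about this step is substantive: a $\mathcal C^1$ function equal to $1$ near $t=0$ and $t=2\pi$ but oscillating rapidly in the middle of $(0,2\pi)$, where the tube $(e^{-t},e^{t})$ is wide, satisfies the integrated bound while violating $|\bar a_u'|<\bar a_u$, so no pointwise or integration argument can recover (\ref{diffinequality}) from the bound alone; whatever justifies the converse must be the comparison-function equivalence the paper invokes (or the explicit parametrization of solutions you allude to), and your text does not supply it. As written, your proof establishes only one implication of the ``precisely if''.
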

\begin{proof}
If for each fixed  $u >0$ the function $b(u, t)=-{\bar b}_u(t)$ is constant with value $0$, then the section $\sigma_u$ given by (\ref{sectionu}) determines a ${\cal C}^1$-differentiable compact loop $L$  if and only if for each fixed
$u > 0$ 
the following inequalities are satisfied:
\begin{equation} \label{equequ4} ({\bar a}'_u(t) -{\bar a}_u(t))({\bar a}'_u(t) + {\bar a}_u(t)) <0,  \quad 0< 1- {\bar a}'^2_u(0), \nonumber  \end{equation}
where ${\bar a}_u(t)=a(u,0) a^{-1}(u,t)$.
This is the case precisely if either one has ${\bar a}'_u(t) -{\bar a}_u(t)<0$ and ${\bar a}'_u(t) + {\bar a}_u(t)>0$ or one has
${\bar a}'_u(t) -{\bar a}_u(t)>0$ and ${\bar a}'_u(t) + {\bar a}_u(t)<0$.
 Now we consider the first case. Then the function ${\bar a}_u(t)$ determines a loop if and only if for each fixed
 $u >0$ it is a subfunction of a differentiable function $h_u(t):=h(u,t)$ with $h_u(0)=1$, $h'^2_u(0)=1$,  
$h'_u(t)=h_u(t)$ and an upper function of a differentiable function $l_u(t):=l(u,t)$ with $l_u(0)=1$, $l'^2_u(0)=1$, 
$l'_u(t)=-l_u(t)$
 (cf. \cite{walter}, p. 66). Hence for each fixed $u > 0$ the function
${\bar a}_u(t)$ is a subfunction of the function $e^{t}$ and an upper function of the function $e^{-t}$ for all
$t \in (0, 2 \pi )$. Therefore, any continuously differentiable function ${\bar a}_u(t)$ such that for each fixed
$u > 0$ and for all
$t \in (0, 2 \pi )$ one has
$e^{-t}< {\bar a}_u(t)^{-1} < e^{t}$  determines a ${\mathcal C}^1$-differentiable compact loop $L$.

In the second case an analogous consideration as in the first case gives that for all fixed $u >0$ the function
$a(u,t) a^{-1}(u,0)$ must be a subfunction of the function $e^{-t}$ and an upper function of the function $e^{t}$ for all $t \in (0, 2 \pi )$. Hence in this case the function $a(u,t) a^{-1}(u,0)$ does not exist.
\end{proof}

\begin{Prop} \label{ujprop}
Let
{\tiny \begin{equation} \label{ujpropsection}
\left( \begin{array}{cc}
u \cos t & u \sin t \\
-u \sin t & u \cos t \end{array} \right) H \mapsto  \left( \begin{array}{cc}
u & 0 \\
0 & u \end{array} \right)
\left( \begin{array}{cc}
\cos t & \sin t \\
-\sin t & \cos t \end{array} \right) \left( \begin{array}{cc}
a(1, t) & b(u, t) \\
0 & a^{-1}(1, t) \end{array} \right), u >0, t \in \mathbb [0,2 \pi)  \end{equation} }
with $b(u, 0)=0$ for all $u >0$ be a section belonging to a multiplicative loop $Q^{\ast }$ of a locally compact 
$2$-dimensional connected topological quasifield $Q$. Then $Q^{\ast }$ contains for any
$u >0$ a $1$-dimensional compact subloop.
\end{Prop}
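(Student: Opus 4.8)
The plan is to treat the claim fibrewise: for each fixed $u>0$ I would extract from the global section (\ref{ujpropsection}) the associated circle section $\sigma_u$ of (\ref{sectionuuj}) and show that, under the present hypotheses, it determines a $1$-dimensional compact loop sitting inside $Q^{\ast}$. First I would record the determinant bookkeeping. Writing $\varrho(t)$ for the rotation matrix and $U_u(t)=\left(\begin{smallmatrix}a(1,t)&b(u,t)\\0&a^{-1}(1,t)\end{smallmatrix}\right)$, the upper triangular factor appearing in (\ref{sectionuuj}) has determinant $a(u,t)a^{-1}(u,0)\,a^{-1}(u,t)a(u,0)=1$, so $\sigma_u$ always maps into $\mathrm{SL}_2(\mathbb R)$; because the special shape (\ref{ujpropsection}) forces $a(u,t)=a(1,t)$ and hence $a(u,0)=a(1,0)=1$, this factor is exactly $U_u(t)\in H$ and $\sigma_u(\varrho(t)H)=\varrho(t)U_u(t)$.

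Next I would establish that each $\sigma_u$ is sharply transitive and yields a compact loop. Since the section (\ref{ujpropsection}) belongs to the loop $Q^{\ast}$, it is sharply transitive; by the analysis preceding Theorem~\ref{compactkernel}, where equality of determinants in (\ref{equationmasodikuj}) pins the radial component to $u=u_1^{-1}u_2$, this is equivalent to each $\sigma_u$ determining a quasigroup $F_u$ homeomorphic to $S^1$. The hypothesis $b(u,0)=0$ gives $\sigma_u(u,0)^t=I$, so $(u,0)^t$ is a two-sided identity; hence $F_u$ is a $1$-dimensional compact loop whose left translations topologically generate $\mathrm{SL}_2(\mathbb R)$. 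Up to this point nothing beyond $b(u,0)=0$ has been used, and these facts are the routine part.

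Finally I would show that $F_u$ is a genuine subloop of $Q^{\ast}$, and this is where the special form of (\ref{ujpropsection}) is decisive and where I expect the real difficulty to lie. The key local observation is that the point $(u,0)^t$ has left translation $\lambda_{(u,0)}=u\,\varrho(0)U_u(0)=uI$, a central scalar, while the $u$-independence of the diagonal part $a(1,t)$ means that the whole one-parameter family of fibres shares a single diagonal profile, so that multiplication by the central scalar identifies the fibres compatibly. Using this I would verify that the point set carried by $\sigma_u$ is invariant under the left translations $M_{(x,y)}$ attached to its own elements, i.e. closed under the quasifield multiplication of (\ref{quasifieldmultiplication}); that closure is exactly the statement that $F_u$ is a $1$-dimensional compact subloop. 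The main obstacle is precisely this closure check: the global product $p\ast q=M_p q$ differs a priori from the internal product of $F_u$ by the unipotent shear encoded in $b(u,t)$, and one must show that the identity $a(u,t)=a(1,t)$ makes the two products agree along the fibre, so that $F_u$ embeds in $Q^{\ast}$ rather than merely existing as an abstract circle loop — everything else reduces to the determinant computation above and to the already available description of the compact fibres.
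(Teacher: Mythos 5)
Your first two steps are sound and coincide in substance with the paper's own argument: the paper deduces from the sharp transitivity of the section image on $\mathbb R^2\setminus\{(0,0)^t\}$, together with the observation that the scalar factor $\left(\begin{smallmatrix}u&0\\0&u\end{smallmatrix}\right)$ fixes every line through the origin, that for each fixed $u>0$ the set ${\mathcal T}$ of (\ref{compactsection}) acts sharply transitively on the oriented lines through the origin, and then uses $b(u,0)=0$ to conclude that ${\mathcal T}$ corresponds to a $1$-dimensional compact loop. That is exactly the determinant bookkeeping you perform via $\sigma_u$ and $F_u$, and the paper's proof stops there.

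Your third step, however, rests on a false claim and would fail. The point set of $F_u$ is not closed under the quasifield multiplication $p\ast q=M_pq$ of (\ref{quasifieldmultiplication}) when $u\neq 1$: since $\det$ is constant on cosets of $H$ and $\sigma(xH)\in xH$, one has $\det M_{p\ast q}=\det(M_pM_q)=u^4$ for $p,q\in F_u$, so $p\ast q$ lies on the fibre $F_{u^2}$, not on $F_u$. Already $(u,0)^t\ast(u,0)^t=(u^2,0)^t$, so $(u,0)^t$ is not even idempotent for $\ast$, and the identity $a(u,t)=a(1,t)$ cannot repair a discrepancy that is purely one of scale. The structure that makes $F_u$ a compact loop with identity $(u,0)^t$ is the one induced by the section $\sigma_u$ of (\ref{sectionuuj}), whose multiplication is the normalized operation $p\cdot q=\sigma_u(p)\,q=u^{-1}M_pq$ read on the circle of oriented lines; this is an isotope of the restriction of $\ast$, and only for $u=1$ is $F_u$ a subloop of $Q^{\ast}$ in the literal sense of containing $(1,0)^t$ and being closed under $\ast$ --- which is why the paper immediately afterwards fixes $u=1$. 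You should therefore delete the closure check and read the conclusion, as the paper does, as the assertion that for each $u>0$ the set ${\mathcal T}$ is the set of left translations of a $1$-dimensional compact loop carried by the corresponding fibre of $Q^{\ast}$; your steps one and two already prove exactly that.
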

\begin{proof} The image of the section (\ref{ujpropsection}) acts sharply transitively on the point set 
$\mathbb R^2 \setminus \{ (0,0)^t \}$. Since the subgroup $\left\{ \left( \begin{array}{cc}
u & 0 \\
0 & u \end{array} \right), u > 0 \right\}$ leaves any line through $(0,0)^t$ fixed, the subset
\begin{equation} \label{compactsection} {\mathcal T}=\left\{\left( \begin{array}{cc}
\cos t & \sin t \\
-\sin t & \cos t \end{array} \right) \left( \begin{array}{cc}
a(1, t) & b(u, t) \\
0 & a^{-1}(1, t) \end{array} \right), t \in \mathbb [0,2 \pi) \right\} \end{equation}
acts sharply transitively on the oriented lines through $(0,0)^t$ for any $u > 0$.
Therefore ${\mathcal T}$ corresponds to a $1$-dimensional compact loop since $b(u,0)=0$ for all $u >0$. \end{proof}

\medskip
\noindent
As ${\mathcal T}$ given by (\ref{compactsection}) is the image of a section corresponding to a $1$-dimensional compact subloop of $Q^{\ast }$, every element of ${\mathcal T}$ is elliptic.

\begin{Prop}
Every element of the set ${\mathcal T}$ given by (\ref{compactsection}) is elliptic if and only if the following holds:  
$a(1, k \pi)=1$ for $k \in \{0, 1\}$:
\newline
\noindent
1) if for all $t \in [0, 2 \pi)$ and $u > 0$ one has $b(u,t)=0$, then the function $a(1,t)$ satisfies the inequalities:
{ \begin{equation} \label{ineqcompact} \frac{1- |\sin(t)|}{|\cos(t)|} \le a(1,t) \le \frac{1+ |\sin(t)|}{|\cos(t)|}, 
\end{equation} }
2) if the function  $b(u,t)$ is different from the constant function $0$, then for $\sin(t)>0$ one has
{\small\begin{equation} \label{inequj} \frac{(a(1,t)+ a(1,t)^{-1}) \cos (t)-2}{\sin(t) } < b(u,t) <  \frac{(a(1,t)+ a(1,t)^{-1}) \cos (t)+2}{\sin(t) }, \  \end{equation} }
for $\sin(t)<0$ we have
{\small\begin{equation} \label{inequjuj} \frac{(a(1,t)+ a(1,t)^{-1}) \cos (t)+2}{\sin(t) } < b(u,t) <  \frac{(a(1,t)+ 
a(1,t)^{-1}) \cos (t)-2}{\sin(t) }. \end{equation} }
\end{Prop}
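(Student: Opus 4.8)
The plan is to reduce the geometric condition that a matrix $M$ of $\mathcal{T}$ is elliptic to an arithmetic condition on its trace, and then to read off the inequalities (\ref{ineqcompact}), (\ref{inequj}), (\ref{inequjuj}) by elementary algebra. First I would record the two invariants of a general element
\[ M=\left(\begin{array}{cc} \cos t & \sin t \\ -\sin t & \cos t \end{array}\right)\left(\begin{array}{cc} a(1,t) & b(u,t) \\ 0 & a^{-1}(1,t) \end{array}\right) \]
of $\mathcal{T}$. The first factor lies in $\mathrm{SO}_2(\mathbb{R})$ and the second is upper triangular with determinant $a(1,t)\,a^{-1}(1,t)=1$, so $\det M=1$ and $M\in\mathrm{SL}_2(\mathbb{R})$. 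Multiplying out, the diagonal entries of $M$ sum to
\[ \mathrm{tr}\,M=\cos(t)\bigl(a(1,t)+a(1,t)^{-1}\bigr)-\sin(t)\,b(u,t). \]
An element of $\mathrm{SL}_2(\mathbb{R})$ has both eigenvalues on the unit circle exactly when the characteristic polynomial $\lambda^2-(\mathrm{tr}\,M)\lambda+1$ has non-real or coincident roots, i.e. when $(\mathrm{tr}\,M)^2\le 4\det M=4$, equivalently $|\mathrm{tr}\,M|\le 2$; the strict inequality $|\mathrm{tr}\,M|<2$ singles out the proper elliptic elements, while $|\mathrm{tr}\,M|=2$ leaves only the central elements $\pm I$ together with the parabolic boundary. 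Hence the whole proposition is the translation of this trace condition into inequalities for $a(1,t)$ and $b(u,t)$.

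Next I would isolate the values $t=0$ and $t=\pi$, where $\sin t=0$. There the trace collapses to $\pm\bigl(a(1,k\pi)+a(1,k\pi)^{-1}\bigr)$, and since $a(1,k\pi)>0$ forces $a(1,k\pi)+a(1,k\pi)^{-1}\ge 2$ with equality only at $a(1,k\pi)=1$, the eigenvalues can lie on the unit circle only if $a(1,k\pi)=1$. This produces the common necessary condition $a(1,k\pi)=1$ for $k\in\{0,1\}$ recorded in the statement and, at the same time, fixes the values of the endpoint bounds appearing in part~1).

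For part~1) I set $b(u,t)\equiv 0$, so with $a=a(1,t)$ one has $\mathrm{tr}\,M=\cos(t)\bigl(a+a^{-1}\bigr)$. The condition $|\mathrm{tr}\,M|\le 2$ reads $|\cos t|\,(a+a^{-1})\le 2$; multiplying by the positive factor $a$ turns this into the quadratic inequality $|\cos t|\,a^{2}-2a+|\cos t|\le 0$, whose roots are $\tfrac{1\pm|\sin t|}{|\cos t|}$ by $1-\cos^{2}t=\sin^{2}t$, and whose leading coefficient is positive. Ellipticity is therefore equivalent to $\tfrac{1-|\sin t|}{|\cos t|}\le a(1,t)\le\tfrac{1+|\sin t|}{|\cos t|}$, which is (\ref{ineqcompact}); at $\cos t=0$ one has $\mathrm{tr}\,M=0$, so $M$ is elliptic for every $a$, in agreement with the degeneration of the two bounds at $t=\tfrac{\pi}{2},\tfrac{3\pi}{2}$. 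For part~2), with $b(u,t)$ not identically $0$ and $\sin t\neq 0$, the condition $-2<\mathrm{tr}\,M<2$ becomes
\[ \cos(t)\bigl(a+a^{-1}\bigr)-2<\sin(t)\,b(u,t)<\cos(t)\bigl(a+a^{-1}\bigr)+2, \]
and dividing by $\sin t$ yields (\ref{inequj}) when $\sin t>0$ and, after reversing the inequalities, (\ref{inequjuj}) when $\sin t<0$.

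The step I expect to require the most care is not any single computation but the bookkeeping at the boundary, namely deciding in what sense the elements at $t=0,\pi$ (and at $\cos t=0$) are to count as elliptic, since there the eigenvalues are genuinely real and $M$ sits on the locus $|\mathrm{tr}\,M|=2$. This is precisely why part~1) is phrased with $\le$ rather than $<$: the closed inequality is needed to absorb the endpoints $t=0,\pi$, where it enforces $a(1,k\pi)=1$, whereas in part~2) the strict inequalities are legitimate because the arcs $\sin t>0$ and $\sin t<0$ already exclude those endpoints. Keeping the open arcs separated from the discrete boundary is the only delicate point of the argument.
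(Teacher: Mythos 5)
Your proof is correct and follows essentially the same route as the paper: both reduce ellipticity to the trace condition $|\cos(t)(a(1,t)+a(1,t)^{-1})-\sin(t)b(u,t)|\le 2$ with equality only at $t=k\pi$ (forcing $a(1,k\pi)=1$), then obtain (\ref{ineqcompact}) from the quadratic $|\cos t|\,a^2-2a+|\cos t|\le 0$ and (\ref{inequj}), (\ref{inequjuj}) by isolating $b(u,t)$. The only cosmetic difference is that the paper reaches the bounds on $b(u,t)$ by solving the associated quadratic equation and factoring, whereas you divide the linear trace inequality by $\sin t$ directly.
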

\begin{proof}
Any element of (\ref{compactsection}) is elliptic if and only if the inequality
\begin{equation} \label{ineqelso} |\cos (t) (a(1,t)+a(1,t)^{-1}) -\sin (t) b(u,t)| \le 2 \end{equation}
holds, where the equality sign occurs only for $t=k \pi $, $k \in \{0, 1\}$. Hence $a(1,k \pi)=1$. 
If $b(u,t)=0$, then  inequality (\ref{ineqelso}) reduces to $a^2(1,t) |\cos (t)|- 2 a(1,t) +|\cos (t)| \le 0$ which is equivalent to inequalities (\ref{ineqcompact}).
If $b(u,t) \neq 0$, then inequality (\ref{ineqelso}) is equivalent for all $t \neq k \pi$, $k \in \{0, 1\}$, to
{\tiny \begin{equation} \label{ineqmasodik}
(a(1,t)+ a(1,t)^{-1})^2 \cos^2(t) -2 (a(1,t)+ a(1,t)^{-1})  \sin (t) \cos (t) b(u,t) + \sin^2(t) b^2(u,t) < 4. 
\end{equation} }
Solving the quadratic equation
{\tiny \begin{equation} \label{ineqquadratic}
(a(1,t)+ a(1,t)^{-1})^2 \cos^2(t) -2 (a(1,t)+ a(1,t)^{-1}) \sin (t) \cos (t) x + \sin^2(t) x^2 = 4 \end{equation} }
we get
{\tiny \begin{equation} \label{solution}
x= \frac{2 (a(1,t)+ a(1,t)^{-1}) \cos(t) \sin (t) \pm 4 \sin (t)}{2 \sin^2(t)}= 
\frac{(a(1,t)+ a(1,t)^{-1}) \cos(t) \pm 2}{\sin (t)}. \nonumber \end{equation} }
Comparing (\ref{ineqmasodik}) and (\ref{ineqquadratic}) one obtains
{\tiny \begin{equation} \label{ineqharmadik} \left( b(u,t)- \frac{(a(1,t)+ a(1,t)^{-1}) \cos(t) - 2}{\sin (t)} \right)
\left( b(u,t)- \frac{(a(1,t)+ a(1,t)^{-1}) \cos(t) + 2}{\sin (t)} \right) < 0 \nonumber \end{equation} }
which yields inequalities (\ref{inequj}) and (\ref{inequjuj}).
\end{proof}

\begin{Prop} \label{complex} Let $Q^{\ast }$ be the multiplicative loop of a  locally compact quasifield $Q$ of dimension $2$ containing a $1$-dimensional compact normal subloop.
The quasifield $Q$ is the field $\mathbb C$ of complex numbers if and only if ${\mathcal T}$  is a normal subset in the set of all left translations of $Q^{\ast }$.
\end{Prop}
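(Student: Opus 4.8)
The plan is to read ``$\mathcal{T}$ is a normal subset in the set of all left translations of $Q^{\ast}$'' as the assertion that the $1$-dimensional compact subloop $N$ whose set of left translations is $\mathcal{T}$ (the subloop furnished by Proposition \ref{ujprop}) is a \emph{normal} subloop of $Q^{\ast}$ in the sense of (\ref{normalloop}); the strategy is to convert this loop-theoretic normality into conjugation invariance inside the group $G$ generated by all left translations, and then to confront that with the structure of $G$. For the easy direction I would argue directly: if $Q=\mathbb{C}$, then $Q^{\ast}=\mathbb{C}^{\ast}$ is a commutative group whose left translations are exactly the scaled rotations $\mathbb{R}_{>0}\,{\mathrm{SO}}_2(\mathbb{R})$, so here $a(u,t)\equiv 1$, $b(u,t)\equiv 0$ and $\mathcal{T}$ from (\ref{compactsection}) reduces to the circle group ${\mathrm{SO}}_2(\mathbb{R})$. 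Since the group generated by the left translations is the abelian group $\mathbb{C}^{\ast}=\mathbb{R}_{>0}\,{\mathrm{SO}}_2(\mathbb{R})$, the subgroup ${\mathrm{SO}}_2(\mathbb{R})$ is normal, and hence $\mathcal{T}$ is a normal subset.

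For the converse I would first translate (\ref{normalloop}) into matrix language. Writing $\lambda_q=M_{\vec q}$ for the (linear) left translation by $q$, the point set $q\cdot N$ equals $M_{\vec q}(N)$, while $N\cdot q$ equals the orbit $\mathcal{T}\vec q$; thus the first condition $x\cdot N=N\cdot x$ reads $M_{\vec q}(N)=\mathcal{T}\vec q$. Applying the two remaining identities of (\ref{normalloop}) to an arbitrary $y$ and reading off the left translations of the points involved, I expect the second condition to give $\{\lambda_{x n}:n\in N\}=M_{\vec q}\,\mathcal{T}$ and the third to give $\{\lambda_{n x}:n\in N\}=\mathcal{T}\,M_{\vec q}$. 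Because the first condition already forces the point sets $x\cdot N$ and $N\cdot x$ to coincide, the sets of left translations they induce coincide as well, and comparing the two previous displays yields the matrix identity $M_{\vec q}\,\mathcal{T}=\mathcal{T}\,M_{\vec q}$ for every $q$. As the $M_{\vec q}$ topologically generate $G$, this says precisely that $\mathcal{T}$ is invariant under conjugation by $G$, i.e. a union of $G$-conjugacy classes.

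Now comes the decisive step. By the remark following Proposition \ref{ujprop} every element of $\mathcal{T}$ is elliptic, and $\mathcal{T}$ is one-dimensional (homeomorphic to $S^{1}$). In the connected component of ${\mathrm{GL}}_2(\mathbb{R})$ every non-central element, in particular every non-trivial elliptic one, has a two-dimensional conjugacy class, since its centralizer is the two-dimensional torus $\cong\mathbb{C}^{\ast}$. Hence, were $G$ all of the connected component of ${\mathrm{GL}}_2(\mathbb{R})$, a normal $\mathcal{T}$ would have to contain the entire two-dimensional class of each of its non-trivial (hence non-scalar, elliptic) elements, contradicting $\dim\mathcal{T}=1$. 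Therefore $G$ is a proper subgroup of the connected component of ${\mathrm{GL}}_2(\mathbb{R})$. Since for a \emph{proper} $2$-dimensional quasifield the left translations generate the whole connected component (the theorem of \cite{loops}, Section 29, quoted at the start of Section 4), $Q$ cannot be proper; being a $2$-dimensional locally compact connected quasifield with kernel $\mathbb{R}$ which is a field, it must be $\mathbb{C}$.

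The step I expect to demand the most care is the passage from the three normality conditions (\ref{normalloop}) to the clean identity $M_{\vec q}\,\mathcal{T}=\mathcal{T}\,M_{\vec q}$: the right translations are nonlinear, so one cannot conjugate directly but must argue through the equality of the point sets $x\cdot N=N\cdot x$ and of the left translations they determine. Once conjugation invariance in $G$ is secured, the dimension-versus-conjugacy-class count is routine; the only extra point to verify is that $\mathcal{T}$ really contains non-central elements, which is immediate because its elements are elliptic and non-scalar, so that the clash with $\dim\mathcal{T}=1$ genuinely forces $G$ to be a proper subgroup and hence $Q=\mathbb{C}$.
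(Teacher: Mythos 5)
Your argument is correct and reaches the same contradiction as the paper, but the decisive step is genuinely different. On the easy direction you agree with the paper verbatim. For the converse, the paper reads ``$\mathcal{T}$ is a normal subset of the set of left translations'' directly as conjugation-invariance of $\mathcal{T}$ under every left translation, hence under the group $G={\mathrm{GL}}_2^+(\mathbb R)$ they topologically generate; your detour through the three identities of (\ref{normalloop}) is therefore unnecessary for the statement as posed (and is the shakiest part of your write-up, since the matching element $n'$ in a setwise equality such as $(x\cdot N)\cdot y=x\cdot(N\cdot y)$ could a priori depend on $y$, so extracting the clean operator identity $M_{\vec q}\,\mathcal{T}=\mathcal{T}\,M_{\vec q}$ needs a word of justification). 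Once conjugation-invariance in $G$ is in hand, the paper computes explicitly that invariance under conjugation by ${\mathrm{SO}}_2(\mathbb R)$ already forces $a(1,t)=1$ and $b(1,t)=0$, i.e.\ $\mathcal{T}={\mathrm{SO}}_2(\mathbb R)$, and then observes that ${\mathrm{SO}}_2(\mathbb R)$ is not normal in ${\mathrm{GL}}_2^+(\mathbb R)$; you instead avoid identifying $\mathcal{T}$ altogether and argue that any non-central (elliptic, non-scalar) element of ${\mathrm{GL}}_2^+(\mathbb R)$ has a two-dimensional (indeed non-compact) conjugacy class, which cannot fit inside the compact one-dimensional set $\mathcal{T}$. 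Your route is more conceptual and arguably more robust: it uses only set-invariance of $\mathcal{T}$, whereas the paper's displayed computation implicitly matches each conjugated element with the element of $\mathcal{T}$ having the same rotation part. Both versions then conclude identically that $G$ cannot be all of ${\mathrm{GL}}_2^+(\mathbb R)$, so $Q^{\ast}$ is not a proper loop and $Q=\mathbb C$.
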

\begin{proof}
If $Q$ is the field of complex numbers, then $Q^{\ast }$ is the group
${\mathrm{SO}}_2(\mathbb R) \times \mathbb R$ and the assertion is true. If the set ${\mathcal T}$ is a normal subset in the set of the left translations of a proper loop $Q^{\ast }$, then it is normal in the connected component 
${\mathrm{GL}}_2^+(\mathbb R)$ of the group 
${\mathrm{GL}}_2(\mathbb R)$ because ${\mathrm{GL}}_2^+(\mathbb R)$ is the group topologically generated by the left translations of 
$Q^{\ast }$ (cf. \cite{loops}, Section 29, p. 345).
If ${\mathcal T}$ is normal in ${\mathrm{GL}}_2^+(\mathbb R)$, then  for $D=\left( \begin{array}{cc}
\cos \varphi & \sin \varphi \\
-\sin \varphi & \cos \varphi \end{array} \right)$ one has that
{\scriptsize \begin{equation}
D^{-1} \left( \begin{array}{cc}
\cos t & \sin t \\
-\sin t & \cos t \end{array} \right) D D^{-1} \left( \begin{array}{cc}
a(1,t) & b(1,t) \\
0 & a^{-1}(1,t) \end{array} \right) D= \nonumber \end{equation}
\begin{equation} \left( \begin{array}{cc}
\cos t & \sin t \\
-\sin t & \cos t \end{array} \right) D^{-1} \left( \begin{array}{cc}
a(1,t) & b(1,t) \\
0 & a^{-1}(1,t) \end{array} \right) D=
\left( \begin{array}{cc}
\cos t & \sin t \\
-\sin t & \cos t \end{array} \right)
\left( \begin{array}{cc}
a(1,t) & b(1,t) \\
0 & a^{-1}(1,t) \end{array} \right) \nonumber \end{equation} }
is satisfied for all $\varphi \in [0, 2 \pi)$ if and only if $a(1,t)=1$ and $b(1,t)=0$ or equivalently 
${\mathcal T}={\mathrm{SO}}_2(\mathbb R)$.
But the compact group ${\mathrm{SO}}_2(\mathbb R)$ is not normal in ${\mathrm{GL}}_2^+(\mathbb R)$. Hence 
$Q^{\ast }$ is not proper and the assertion follows. \end{proof}

\begin{Lemma} \label{nonquasi} If the multiplicative proper loop $Q^{\ast }$ of a $2$-dimensional locally compact connected topological quasifield $Q$ is not quasi-simple, then the set 
${\mathcal K}=\{M(r \cos (k \pi) a(r, k \pi), 0); r > 0, k \in \{0,1 \} \}$ of the  left translations
of $Q^{\ast }$ belonging to the kernel $K_r$ of $Q$ has the form
$\left\{ \left( \begin{array}{cc}
r  & 0 \\
0 &  r   \end{array} \right), 0 \neq r\in \mathbb R \right\}$, which is a normal subgroup of the set 
$\Lambda _{Q^{\ast }}$ of all left translations of $Q^{\ast }$.
\end{Lemma}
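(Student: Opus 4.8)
The plan is to exploit the correspondence between normal subloops of $Q^{\ast}$ and normal subgroups of the group $G=\mathrm{GL}_2^+(\mathbb R)$ topologically generated by the left translations of $Q^{\ast}$. Since $Q^{\ast}$ is not quasi-simple it contains a proper closed normal subloop of positive dimension; as $Q^{\ast}$ is connected and $\dim Q^{\ast}=2$, such a subloop is $1$-dimensional, and replacing it by its identity component I may assume that $N$ is a connected normal subloop homeomorphic to $S^1$ or to $\mathbb R$. First I would rule out the compact case: if $N$ were a $1$-dimensional compact normal subloop, its set of left translations would be a normal subset $\mathcal T$ of $\Lambda_{Q^{\ast}}$, and by Proposition \ref{complex} this would force $Q=\mathbb C$, contradicting that $Q^{\ast}$ is a proper loop. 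Hence $N$ is non-compact, homeomorphic to $\mathbb R$.

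Next I would use that the only proper non-trivial connected normal subgroups of $G=\mathrm{GL}_2^+(\mathbb R)$ are the identity component $Z=\{rI:\ r>0\}$ of its centre and $\mathrm{SL}_2(\mathbb R)$, since $\mathfrak{gl}_2(\mathbb R)=\mathbb R I\oplus\mathfrak{sl}_2(\mathbb R)$ with $\mathfrak{sl}_2(\mathbb R)$ simple. By the correspondence of \cite{loops} the connected normal subloop $N$ determines one of these subgroups $M$, with $\dim(MH/H)=\dim N=1$. Because every matrix in $H$ has determinant $1$, one has $H\subseteq\mathrm{SL}_2(\mathbb R)$; thus if $M=\mathrm{SL}_2(\mathbb R)$ then $MH/H=\mathrm{SL}_2(\mathbb R)/H$ is the space of oriented lines through the origin, which is homeomorphic to $S^1$, and $N$ would be compact, the case already excluded. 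Therefore $M=Z$, so the left translations of the elements of $N$ are scalar matrices $rI$ with $r>0$.

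Since such a scalar matrix sends the identity $(1,0)^t$ to $(r,0)^t$, which lies on the $x$-axis and hence in the kernel $K_r$, the subloop $N$ is precisely the connected component $\{(r,0)^t:\ r>0\}$ of $K_r^{\ast}$, and its left translations are the matrices $M(r\,a(r,0),0)$ of Remark \ref{diagonaluj} for $k=0$. Their being scalar yields $a(r,0)=1$ and $b(r,0)=0$ for all $r>0$. To obtain the same for $k=1$ I would combine this with the fact --- valid without any hypothesis, since $Q$ is a right vector space over $K_r$ --- that right multiplication by a kernel element is the scalar map $\rho_{(\xi,0)}=\xi I$; together with the group structure of $K_r^{\ast}$ and the normality already established, one deduces $\lambda_{-e_1}=-I$, i.e. $a(r,\pi)=1$ and $b(r,\pi)=0$ for all $r>0$. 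By Remark \ref{diagonaluj} this is exactly the statement that $\mathcal K=\{rI:\ r\in\mathbb R\setminus\{0\}\}$. Finally, as scalar matrices are central in $\mathrm{GL}_2(\mathbb R)$, one has $g\,\mathcal K\,g^{-1}=\mathcal K$ for every $g\in\Lambda_{Q^{\ast}}$, so $\mathcal K$ is a normal subgroup of $\Lambda_{Q^{\ast}}$.

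The step I expect to be the main obstacle is making the subloop--subgroup correspondence sufficiently precise to guarantee that the left translations of $N$ are the \emph{central} scalars and that $N$ is a component of the kernel. The closely related difficulty is the passage from the connected component $\{(r,0)^t:\ r>0\}$ of $K_r^{\ast}$ to its second component: the defining axioms of the kernel give only the right-associativity $(x\cdot y)\cdot k=x\cdot(y\cdot k)$, so the equality $\lambda_{-e_1}=-I$ cannot be read off from the quasifield axioms alone and must be forced by the normality of $N$.
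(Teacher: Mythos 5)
Your skeleton is the same as the paper's: Lemma 1.7 of \cite{loops} turns the positive--dimensional normal subloop into a normal subgroup of $\mathrm{GL}_2^+(\mathbb R)$, the short list of connected normal subgroups forces this subgroup to be the central ray $\{uI:\ u>0\}$, and sharp transitivity then identifies $uI$ as the left translation of $(u,0)^t$, giving $a(r,0)=1$ and $b(r,0)=0$. One caveat on that first half: your exclusion of the compact case via Proposition \ref{complex} is not quite licensed, because that proposition needs $\mathcal T$ to be a normal \emph{subset} of $\Lambda_{Q^{\ast}}$, which is strictly stronger than $N$ being a normal subloop (Lemma 1.7 only makes the \emph{generated group} normal in $G$); this step is better run through the orbit description $N=MH/H$ that you use immediately afterwards, where $M=\mathrm{SL}_2(\mathbb R)$ is discarded. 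That part is repairable.

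The genuine gap is the one you flag yourself: the passage from $k=0$ to $k=1$, i.e.\ the claim $\lambda_{(-s,0)}=-sI$, equivalently $a(r,\pi)=1$ and $b(r,\pi)=0$. Writing that ``together with the group structure of $K_r^{\ast}$ and the normality already established, one deduces $\lambda_{-e_1}=-I$'' is an announcement, not a deduction, and nothing established up to that point yields it. Indeed, for the connected subloop $P=\{(s,0)^t:\ s>0\}$ the three conditions (\ref{normalloop}) reduce to $\lambda_{(s,0)}=sI$ and $\lambda_{sx}=s\lambda_x$, that is, to $a(r,0)=1$, $b(r,0)=0$ and to $a,b$ being independent of $r$; they never constrain the values $a(1,\pi)$, $b(1,\pi)$, and they are perfectly compatible with $\lambda_{(-1,0)}=\mathrm{diag}\left(-1,\,-a(1,\pi)^{-2}\right)\neq -I$. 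The auxiliary facts you invoke ($\rho_{(\xi,0)}=\xi I$, $(-e_1)\ast(-e_1)=e_1$, $K_r^{\ast}$ a group) also hold for such a $\lambda_{(-1,0)}$. So an additional argument is required: one must show either that the normal subloop necessarily contains the negative half-axis with scalar left translations, or that a sharply transitive section with $r$-independent $a,b$ and $a(1,\pi)\neq 1$ cannot coordinatize a quasifield. As it stands your proposal establishes only the $k=0$ half of $\mathcal K$. (To be fair, the paper's own proof is equally terse here, dispatching the point with ``It follows that \dots''; you have located exactly the step that carries the real content of Lemma \ref{nonquasi}.)
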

\begin{proof} 
By Lemma 1.7, p. 19, in \cite{loops}, the left translations of a normal subloop of $Q^{\ast }$ generate a normal subgroup $N$ of  ${\mathrm{GL}}_2^+(\mathbb R)$ which is the group topologically generated by all left translations of 
$Q^{\ast }$. Hence the set 
$\Lambda _{Q^{\ast }}$ of the left translations of 
$Q^{\ast }$ must contain the group $C =\left\{ \left( \begin{array}{cc}
u & 0  \\
0 & u \end{array} \right), \ 0 < u \in \mathbb R  \right\}$ as a normal subgroup. It follows that the set ${\mathcal K}$ of the left translations corresponding to the elements of the kernel
$K_r$ of $Q$ contained in  $\Lambda _{Q^{\ast }}$ has the form given in the assertion and 
${\cal K} \cap \Lambda _{Q^{\ast }}$ is a normal subgroup of $\Lambda _{Q^{\ast }}$.
\end{proof}

\noindent
Assume that the set ${\cal K}$ of the left translations of the loop $Q^{\ast }$ having $(1,0)^t$ as identity corresponding to the elements of the kernel $K_r$ of $Q$ has the form given in Lemma \ref{nonquasi}. According to 
(\ref{quasifieldmultiplication}) the element
$$ \left( \begin{array}{cc}
ra(r, \varphi ) \cos \varphi & r b(r, \varphi) \cos \varphi + ra^{-1}(r, \varphi) \sin \varphi \\
-ra(r, \varphi ) \sin \varphi & -r b(r, \varphi)\sin \varphi + ra^{-1}(r, \varphi)\cos \varphi \end{array} \right) $$

\noindent
corresponds to the left translation of  $(r a(r, \varphi ) \cos \varphi ,-r a(r, \varphi ) \sin \varphi )^t$, $r >0$, 
$\varphi \in [0, 2 \pi)$. 
Let $N^{\ast}$ be the subgroup of $Q^{\ast}$ corresponding to the normal subgroup
${\mathcal K}$ of $\Lambda _{Q^{\ast }}$. We show that
$N^{\ast }:=\{ ({\hat s}, 0)^t, {\hat s} \in \mathbb R \setminus \{ 0 \} \}$ is normal in $Q^{\ast}$.
For all elements $x:= (\cos \varphi , -\sin \varphi )^t$, $y:=(u,v)^t$ of $Q^{\ast}$ the condition
$(N^{\ast } \ast x) \ast y=N^{\ast } \ast (x \ast y)$ of (\ref{normalloop}) is satisfied if and only if
we have
{\tiny \begin{equation} \left[ \left( \begin{array}{c}
{\hat s} \\
0 \end{array} \right) \ast \left( \begin{array}{c}
 cos \varphi \\
-sin \varphi  \end{array} \right)\right]
\ast \left( \begin{array}{c}
u \\
v\end{array} \right) = \left( \begin{array}{c}
{\hat s}' \\
0 \end{array} \right) \ast \left[ \left( \begin{array}{c}
 cos \varphi \\
-sin \varphi  \end{array} \right)
\ast \left( \begin{array}{c}
u \\
v\end{array} \right)\right] \nonumber \end{equation}} for all $\varphi \in [0, 2 \pi)$, 
$(u,v) \in \mathbb R^2 \setminus \{(0,0) \}$ with suitable ${\hat s}, {\hat s}' \in \mathbb R \setminus \{ 0 \}$ such that ${\hat s}=s \cos(k \pi)$, 
${\hat s}'=s' \cos(k \pi)$, where $s, s'>0$, $k \in \{ 0, 1\}$.
This is the case precisely if  one has
{\tiny \begin{equation} \left( \begin{array}{c}
u s a(s, \varphi + k \pi) \cos (\varphi + k \pi)  + vs b(s, \varphi+ k \pi) \cos (\varphi +k \pi)+ 
v s a^{-1}(s, \varphi +k \pi) \sin (\varphi + k \pi) \\
-u s a(s, \varphi + k \pi ) \sin (\varphi + k \pi) -vs b(s, \varphi + k \pi)\sin (\varphi + k \pi) + 
v s a^{-1}(s, \varphi + k \pi ) \cos (\varphi + k \pi) \end{array} \right) = \nonumber \end{equation} 
\begin{equation} \left( \begin{array}{c}
s' \cos (k \pi )(u a(1, \varphi ) \cos \varphi + v b(1, \varphi) \cos \varphi + v a^{-1}(1, \varphi) \sin \varphi ) \\
s' \cos (k \pi )(-u a(1, \varphi ) \sin \varphi -v b(1, \varphi) \sin \varphi + v a^{-1}(1, \varphi) \cos \varphi ) 
\end{array} \right) \nonumber \end{equation} }
or equivalently for all $\varphi \in [0, 2 \pi )$, $(u,v) \in \mathbb R^2 \setminus \{(0,0) \}$ we have
{\tiny \begin{equation}
[(u a(s, \varphi + k \pi)+ v b(s, \varphi + k \pi)) \cos (\varphi+ k \pi) + 
v a^{-1}(s, \varphi+ k \pi) \sin (\varphi+ k \pi)] \cdot [(-u a(1, \varphi ) 
-v b(1, \varphi))\sin \varphi + v a^{-1}(1, \varphi)\cos \varphi] = \nonumber \end{equation}
\begin{equation}
[(-u a(s, \varphi + k \pi)-v b(s, \varphi+ k \pi)) \sin (\varphi+ k \pi) + 
v a^{-1}(s, \varphi+ k \pi) \cos (\varphi+ k \pi)] \cdot [(u a(1, \varphi ) + v b(1, \varphi)) \cos \varphi + 
v a^{-1}(1, \varphi) \sin \varphi ]. \nonumber \end{equation}}
The last equation holds if and only if one has 
{\tiny \begin{equation}
(a(s, \varphi+ k \pi)a^{-1}(1, \varphi) - a^{-1}(s, \varphi+ k \pi)a(1, \varphi) )uv + 
(b(s, \varphi+ k \pi)a^{-1}(1, \varphi) - a^{-1}(s, \varphi+ k \pi)b(1, \varphi ))v^{2} = 0. \nonumber \end{equation} }
Therefore we obtain $a(s, \varphi+ k \pi) a^{-1}(1, \varphi) - a^{-1}(s, \varphi+ k \pi) a(1, \varphi )= 0$ and 
$b(s, \varphi+ k \pi) a^{-1}(1, \varphi) - a^{-1}(s, \varphi+ k \pi) b(1, \varphi) =0$. 
As $a(s, \varphi )$ is positive  we have $a(s, \varphi+ k \pi) =a(1, \varphi)$,  
$b(s, \varphi + k \pi) =b(1, \varphi )$ for all
$s >0$, $\varphi \in [0, 2 \pi)$, $k \in \{0, 1\}$. 
By (\ref{normalloop}) the group $N^{\ast }$ is a normal subgroup of $Q^{\ast }$ if and only if for all 
$\varphi \in [0, 2 \pi)$, 
$(u,v) \in \mathbb R^2 \setminus \{(0,0)\}$ one has
{\tiny \begin{equation} \left[\left( \begin{array}{c}
 \cos \varphi \\
- \sin \varphi \end{array} \right)
\ast \left( \begin{array}{c}
{\hat s} \\
0 \end{array} \right)\right] \ast \left( \begin{array}{c}
 u \\
v \end{array} \right) =  \left( \begin{array}{c}
 \cos \varphi \\
- \sin \varphi \end{array} \right) \ast \left[ \left( \begin{array}{c}
{\hat s}' \\
0 \end{array} \right) \ast \left( \begin{array}{c}
u \\
v \end{array} \right) \right ] \ \ \hbox{or} \nonumber \end{equation} }
{\tiny \begin{equation} \left( \begin{array}{cc}
s a(1, \varphi)a(1, \varphi) \cos (\varphi+k \pi) & s a(1, \varphi)b(1, \varphi) \cos (\varphi+k \pi) + 
s sin (\varphi+k \pi)\\
-sa(1, \varphi)a(1, \varphi) \sin (\varphi+ k \pi) & -s a(1, \varphi)b(1, \varphi) \sin (\varphi+ k \pi) + 
s cos (\varphi+ k \pi) \end{array} \right)
\left( \begin{array}{c}
u  \\
v \end{array} \right) = \nonumber \end{equation} 
\begin{equation} \left( \begin{array}{cc}
a(1, \varphi)\cos \varphi & b(1, \varphi)\cos \varphi + a^{-1}(1, \varphi) sin \varphi\\
-a(1, \varphi) \sin \varphi & - b(1, \varphi)\sin \varphi + a^{-1}(1, \varphi) cos \varphi \end{array} \right)
\left( \begin{array}{c}
s' \cos(k \pi) u \\
s' \cos(k \pi) v \end{array} \right) \nonumber \end{equation} }
for suitable ${\hat s}, {\hat s}' \in \mathbb R \setminus \{0 \}$ such that ${\hat s}=s \cos(k \pi)$, 
${\hat s}'=s' \cos (k \pi)$, where $s, s' >0$, $k \in \{0, 1\}$. This is equivalent to
{\tiny \begin{equation}\left( \begin{array}{c}
sua(1, \varphi )^{2} \cos (\varphi+ k \pi) + sv[a(1, \varphi ) b(1, \varphi ) \cos (\varphi+k \pi) +  
\sin (\varphi+k \pi) ]\\
-su a(1, \varphi )^{2} \sin (\varphi+ k \pi) + sv[-a(1, \varphi ) b(1, \varphi ) \sin (\varphi+ k \pi) +  
\cos (\varphi+k \pi) ]\end{array} \right)
  = \nonumber \end{equation} 
\begin{equation}	\left( \begin{array}{c}
us'a(1, \varphi)  \cos (\varphi+k \pi)  + s'v[b(1, \varphi) \cos (\varphi+k \pi) + a^{-1}(1, \varphi) 
\sin (\varphi+k \pi)] \\
-us'a(1, \varphi) \sin (\varphi+k \pi) + s'v[-b(1, \varphi) \sin (\varphi+k \pi) + a^{-1}(1, \varphi) 
\cos (\varphi+k \pi)] \end{array} \right). \nonumber \end{equation} }
A direct computation yields that the above equality is true for all $(u,v) \in \mathbb R^2 \setminus \{(0,0)\}$, 
$\varphi \in [0, 2 \pi )$, $k \in \{0, 1 \}$. 

\medskip
\noindent 
Using Proposition \ref{complex}, Lemma \ref{nonquasi} and the discussion above we have the following

\begin{Theo} \label{normal}
The multiplicative  loop $Q^{\ast }$ of a locally compact $2$-dimensional quasifield $Q$ with $(1,0)^t$ as identity 
of $Q^{\ast }$ is proper and not quasi-simple if and only if  for all
$r > 0$, $\varphi \in [0, 2 \pi)$, $k \in \{0, 1\}$ one has $a(r,k \pi) = 1$, $b(r,k \pi) = 0$, 
$a(r, \varphi+k \pi) = a(1, \varphi)$ and 
$b(r, \varphi+k \pi) = b(1, \varphi)$.
Then $Q^{\ast }$ is a split extension of a $1$-dimensional normal subgroup $N^{\ast }$ isomorphic to 
$\mathbb R \times Z_2$, where $Z_2$ is the group of order $2$ by a subloop homeomorphic to the 
$1$-sphere.  
\end{Theo}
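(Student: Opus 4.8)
The plan is to establish the biconditional by assembling three ingredients that are already available, namely Lemma~\ref{nonquasi}, Remark~\ref{diagonaluj}, and the explicit normality computation carried out in the paragraphs immediately preceding the statement; the structural assertion is then read off from the resulting shape of the section. For the implication from left to right I would assume that $Q^{\ast}$ is proper and not quasi-simple. By Lemma~\ref{nonquasi} the presence of a positive-dimensional normal subloop forces the set $\mathcal{K}$ of those left translations that come from the kernel $K_r$ to equal the scalar group $\{rI:0\neq r\in\mathbb{R}\}$ and to be normal in $\Lambda_{Q^{\ast}}$; by Remark~\ref{diagonaluj} this is precisely the pair of identities $a(r,k\pi)=1$ and $b(r,k\pi)=0$ for all $r>0$ and $k\in\{0,1\}$, which gives the first two asserted equations.

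For the two remaining equations I would use that, once $\mathcal{K}$ is the scalar group, the nonzero kernel elements form the subgroup $N^{\ast}=\{(\hat{s},0)^{t}:\hat{s}\neq 0\}$, and that the positive-dimensional normal subloop of $Q^{\ast}$ is exactly the subloop whose left translations constitute $\mathcal{K}$, i.e. $N^{\ast}$ itself. Hence being not quasi-simple is equivalent to $N^{\ast}$ being normal in $Q^{\ast}$. Expressing the relations (\ref{normalloop}) for $N^{\ast}$ through the matrix form (\ref{quasifieldmultiplication}) and comparing the coefficients of $uv$ and $v^{2}$ — which is exactly the computation displayed before the theorem — yields $a(s,\varphi+k\pi)=a(1,\varphi)$ and $b(s,\varphi+k\pi)=b(1,\varphi)$, while the complementary relation of (\ref{normalloop}) is confirmed by the direct calculation given there. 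This simultaneously settles the converse: if the four identities hold, then $\mathcal{K}$ is scalar by Remark~\ref{diagonaluj} and $N^{\ast}$ is normal by the same computation, so $Q^{\ast}$ carries a $1$-dimensional normal subloop and fails to be quasi-simple.

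The point I expect to be the main obstacle is to keep properness correctly tied to the equivalence, since $\mathbb{C}$ also satisfies the four identities. Here I would use that $Q^{\ast}$ is a group exactly when $Q$ is an associative semifield, hence — by the result quoted in the introduction — exactly when $Q=\mathbb{C}$, equivalently, by Proposition~\ref{complex}, when $\mathcal{T}=\mathrm{SO}_2(\mathbb{R})$, that is $a(1,\varphi)\equiv 1$ and $b(1,\varphi)\equiv 0$. Thus among the loops satisfying the four identities the improper one is precisely $\mathbb{C}^{\ast}$, and under the standing assumption that $Q$ is a proper quasifield the angular data $\mathcal{T}$ is a proper compact loop, so $Q^{\ast}$ is proper.

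Finally, for the structural statement I would observe that the four identities make the factor $\left(\begin{smallmatrix}a&b\\0&a^{-1}\end{smallmatrix}\right)$ independent of the radius $r$, so that every left translation (\ref{quasifieldmultiplication}) factors as $rI$ times an angular matrix depending only on $\varphi$. On the kernel the multiplication is ordinary real multiplication, whence $N^{\ast}=K_r\setminus\{0\}\cong\mathbb{R}^{\ast}\cong\mathbb{R}\times Z_2$. The angular matrices taken at $u=1$ form the compact subloop $\mathcal{T}$ of Proposition~\ref{ujprop}, which is homeomorphic to $S^{1}$, complements $N^{\ast}$, and meets it only in $\{\pm(1,0)^{t}\}$; therefore $Q^{\ast}$ is the split extension of $N^{\ast}$ by this $1$-sphere subloop, as asserted.
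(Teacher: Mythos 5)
Your proposal is correct and follows essentially the same route as the paper: the biconditional is obtained from Lemma~\ref{nonquasi} together with Remark~\ref{diagonaluj} and the explicit normality computation for $N^{\ast}$ carried out just before the theorem, and the split-extension claim from the fact that the compact subloop of Proposition~\ref{ujprop} meets $N^{\ast}\cong\mathbb R\times Z_2$ in exactly two elements. Your explicit discussion of why properness must be invoked to exclude $Q=\mathbb C$ (which also satisfies the four identities) is in fact more careful than the paper's own treatment, which leaves this point implicit.
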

\begin{proof} We have only to prove the last claim.
According to Lemma \ref{nonquasi} and the above discussion the only possibility for a normal subloop of positive dimension is the group $N^{\ast }$. The intersection of a compact subloop of $Q^{\ast }$ with $N^{\ast }$ has cardinality $2$ (cf. Proposition \ref{ujprop}  and Lemma \ref{nonquasi}). Hence the claim is proved. \end{proof}

\medskip
\noindent 
From Proposition \ref{complex} and Theorem \ref{normal} it follows: 

\begin{Coro} 
The multiplicative  loop $Q^{\ast }$ of a locally compact $2$-dimensional quasifield $Q$ with $(1,0)^t$ as identity 
of $Q^{\ast }$ is the direct product of the group $\mathbb R$ and a subloop homeomorphic to the 
$1$-sphere if and only if $Q$ is the field of complex numbers. 
\end{Coro}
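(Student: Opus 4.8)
The plan is to prove the two implications separately, settling the direction $Q=\mathbb C\Rightarrow$ direct product by hand and reducing the converse to Proposition~\ref{complex} after using Theorem~\ref{normal} to exclude the proper case. For the first direction, if $Q=\mathbb C$ then $(Q^{\ast},\cdot)=\mathbb C^{\ast}$, and the polar decomposition $z\mapsto(|z|,z|z|^{-1})$ composed with $\log\colon\mathbb R_{>0}\to\mathbb R$ realizes $\mathbb C^{\ast}$ as the internal direct product of the subgroup $\mathbb R_{>0}\cong\mathbb R$ and the circle group $\mathrm{SO}_2(\mathbb R)$, the latter being a subloop homeomorphic to $S^1$. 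This gives the required decomposition at once.

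For the converse, suppose $Q^{\ast}=\mathbb R\times L$ is a direct product with $L$ homeomorphic to $S^1$. Since the loop operation is componentwise, both factors $\mathbb R\times\{1\}$ and $\{0\}\times L$ are normal subloops of positive dimension, so $Q^{\ast}$ is in particular not quasi-simple. The central step is to show that $Q^{\ast}$ cannot be proper. Were it proper, Theorem~\ref{normal} would apply and force every normal subloop of positive dimension to be the single group $N^{\ast}\cong\mathbb R\times Z_2$, with any compact subloop meeting $N^{\ast}$ in exactly two points. But the compact factor $L\cong S^1$ is itself a connected, compact, positive-dimensional normal subloop, hence cannot coincide with the non-compact, disconnected group $N^{\ast}$; equivalently, $L$ meets the factor $\mathbb R\times\{1\}$ only in the identity rather than in a pair of points. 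This contradicts the uniqueness asserted by Theorem~\ref{normal}, so $Q^{\ast}$ must be a group.

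Once $Q^{\ast}$ is a group, the direct product $\mathbb R\times L$ of the group $\mathbb R$ with $L$ is associative precisely when $L$ is; thus $L$ is a $1$-dimensional compact connected group, i.e. $L=\mathrm{SO}_2(\mathbb R)$, and it is normal in $Q^{\ast}$ as a direct factor. Consequently the set $\mathcal T$ of left translations of the compact subloop $L=\mathrm{SO}_2(\mathbb R)$ is a normal subset of the set of all left translations of $Q^{\ast}$, since for the group loop one has $\lambda_h\lambda_g\lambda_h^{-1}=\lambda_{hgh^{-1}}$ and $\mathrm{SO}_2(\mathbb R)$ is normal in $Q^{\ast}$. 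Proposition~\ref{complex} then yields $Q=\mathbb C$, completing the argument.

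I expect the middle step to be the main obstacle, because Theorem~\ref{normal} only governs \emph{proper} loops: the work is to argue that a genuine direct product with a connected one-dimensional complement is incompatible with the disconnected group $N^{\ast}$, equivalently with the cardinality-$2$ intersection of the circle and $N^{\ast}$ recorded in the proof of Theorem~\ref{normal}. A secondary point requiring care is to confirm that the compact direct factor $L$ produced by the decomposition is exactly the subloop whose left translations constitute the distinguished set $\mathcal T$, so that Proposition~\ref{complex} is literally applicable.
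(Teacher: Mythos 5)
Your proof is correct and follows essentially the route the paper intends: the paper derives this corollary in one line from Proposition~\ref{complex} and Theorem~\ref{normal}, and your argument --- using the uniqueness of the positive-dimensional normal subloop $N^{\ast}\cong\mathbb R\times Z_2$ asserted in (the proof of) Theorem~\ref{normal} to exclude the proper case, then applying Proposition~\ref{complex} to the resulting abelian group, where normality of $\mathcal T$ is automatic --- is precisely the intended derivation. The two points you flag at the end (the incompatibility of a compact connected normal factor with the non-compact, disconnected $N^{\ast}$, and the identification of $L$ with the subloop whose left translations form $\mathcal T$) are handled adequately for the level of detail the paper itself supplies.
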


\bigskip 
\noindent
\centerline{ \bf 5. Decomposable multiplicative loops of $2$-dimensional quasifields}

\bigskip
\noindent
\begin{Dfn}
We call the multiplicative loop $Q^{\ast }$ of a locally compact connected topological $2$-dimensional quasifield $Q$
{\em decomposable}, if the set of all left translations of $Q^{\ast }$ is a product ${\mathcal T} {\mathcal K}$
with  $|{\mathcal T} \cap {\mathcal K}| \leq 2$, where
${\mathcal T}$ is the set of all left translations of a $1$-dimensional compact loop of the form \emph{(\ref{compactsection})} and ${\mathcal K}$ is the set of all left translations of $Q^{\ast }$ belonging to the kernel $K_r$ of 
$Q$.
\end{Dfn}

\noindent
If the loop $Q^{\ast }$ is decomposable, then it contains compact subloops for any
$u >0$ corresponding to the section (\ref{ujpropsection}). From now on we choose $u=1$. Then the compact subloop $F_u$ of 
$Q^{\ast }$ has identity $(1,0)^t$ and one has
{\tiny \begin{equation} \label{equproduct}
\left( \begin{array}{rr}
\cos t a(1,t) & \cos t b(1,t)+ \sin t a^{-1}(1,t) \\
- \sin t  a(1,t) & - \sin t b(1,t)+ \cos t a^{-1}(1,t) \end{array} \right)
\left[ \left( \begin{array}{cc}
r \cos (k \pi ) a(r, k \pi ) & r \cos (k \pi ) b(r,k \pi ) \\
0 & r \cos (k \pi ) a^{-1}(r, k \pi ) \end{array} \right) \left(\!\!\begin{array}{c}
1\\
0
\end{array}\!\!\right) \right] =  \nonumber \end{equation}
\begin{equation}
=\left( \begin{array}{rr}
r \cos (t+k \pi ) a(r,t+k \pi ) & r \cos (t+k \pi ) b(r,t+k \pi )+ r \sin (t+k \pi ) a^{-1}(r,t+k \pi ) \\
- r \sin (t+k \pi ) a(r,t+k \pi ) & - r \sin (t+k \pi ) b(r,t+k \pi )+ r \cos (t+k \pi ) a^{-1}(r,t+k \pi ) \end{array} \right)
\left(\!\!\begin{array}{c}
1\\
0
\end{array}\!\!\right).  \end{equation} }
Equation (\ref{equproduct}) yields that $a(r,t+k \pi )= a(1,t) a(r, k \pi)$.

\noindent
Now we give sufficient and necessary conditions for the loop $Q^{\ast }$ to be decomposable.

\begin{Prop} \label{productloop}
The multiplicative loop $Q^{\ast }$ of a locally compact connected topological $2$-dimensional quasifield $Q$ with 
$(1,0)^t$ as identity of $Q^{\ast }$ is decomposable if and only if for all $r > 0$,
$t \in [0, 2 \pi)$, $k \in \{0, 1\}$ one has 
\begin{equation} a(r,t+k \pi)= a(1,t) a(r, k \pi), \  
b(r,t+ k \pi)=a(1,t) b(r,k \pi)+ a^{-1}(r, k \pi) b(1,t). \nonumber \end{equation}
\end{Prop}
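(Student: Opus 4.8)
The plan is to read decomposability straight off the unique decomposition $g=u\,R(t)\left(\begin{smallmatrix} k & l \\ 0 & k^{-1}\end{smallmatrix}\right)$ (with $u>0$, $t\in[0,2\pi)$, $k>0$, $l\in\mathbb R$) of elements of $\mathrm{GL}_2^+(\mathbb R)$ recalled before (\ref{section}), where $R(t)=\left(\begin{smallmatrix}\cos t & \sin t \\ -\sin t & \cos t\end{smallmatrix}\right)$. Under this decomposition a left translation of $Q^{\ast}$ with parameters $(r,\varphi)$ is precisely the matrix $M(r,\varphi)$ of (\ref{quasifieldmultiplication}), for which $u=r$, $t=\varphi$, $k=a(r,\varphi)$, $l=b(r,\varphi)$; in particular distinct pairs $(r,\varphi)\in\mathbb R_{>0}\times[0,2\pi)$ give distinct left translations, so the left translations are parametrized bijectively by $(r,\varphi)$. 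Decomposability, i.e.\ $\Lambda_{Q^{\ast}}=\mathcal T\mathcal K$, then means that the products $T(t)K(r,k)$, with $T(t)\in\mathcal T$ as in (\ref{compactsection}) (taking $u=1$) and $K(r,k)\in\mathcal K$, exhaust and are exhausted by the matrices $M(s,\psi)$.

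First I would compute the product $T(t)K(r,k)$ explicitly. Each element of $\mathcal K$ is $r\cos(k\pi)$ times an upper triangular matrix of determinant $1$ whose diagonal entries $a(r,k\pi)^{\pm1}$ are positive, and $\cos(k\pi)=(-1)^{k}$ turns $R(t)$ into $R(t+k\pi)$; a short calculation — the one already carried through in (\ref{equproduct}) — yields
\begin{equation}
T(t)K(r,k)=r\,R(t+k\pi)\begin{pmatrix} a(1,t)a(r,k\pi) & a(1,t)b(r,k\pi)+a^{-1}(r,k\pi)b(1,t) \\ 0 & \big(a(1,t)a(r,k\pi)\big)^{-1}\end{pmatrix}. \nonumber
\end{equation}
This is already in the normalized form of a candidate left translation with scalar $r$ and rotation angle $t+k\pi$. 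The left translation of $Q^{\ast}$ carrying that same scalar and angle is $M(r,t+k\pi)$, whose upper triangular part has entries $a(r,t+k\pi)$ and $b(r,t+k\pi)$. By uniqueness of the decomposition, $T(t)K(r,k)=M(r,t+k\pi)$ holds exactly when the $(1,1)$- and $(1,2)$-entries of the two upper triangular parts agree, and these are precisely the two displayed functional equations (the $(2,2)$-entries then match automatically as reciprocals of the $(1,1)$-entries).

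For the forward implication I would argue that, if $Q^{\ast}$ is decomposable, then each $T(t)K(r,k)$ lies in $\Lambda_{Q^{\ast}}$; since its decomposition has scalar $r$ and angle $t+k\pi$, the bijective parametrization forces it to be the left translation $M(r,t+k\pi)$, and the entrywise comparison gives both equations. For the converse, assuming the equations, the computation above shows $T(t)K(r,k)=M(r,t+k\pi)$ for all admissible $t,r,k$, so $\mathcal T\mathcal K\subseteq\Lambda_{Q^{\ast}}$; conversely every $M(s,\psi)$ is recovered by writing $\psi=t+k\pi$ with $t\in[0,\pi)$ and $k\in\{0,1\}$, whence $\Lambda_{Q^{\ast}}=\mathcal T\mathcal K$. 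The remaining condition $|\mathcal T\cap\mathcal K|\le 2$ of the definition follows from the compact-subloop structure of Proposition \ref{ujprop}: $\mathcal T$ is the translation set of a $1$-dimensional compact loop, and a matrix $T(t)$ is (up to sign) upper triangular — hence possibly in $\mathcal K$ — only when $\sin t=0$, i.e.\ at $t\in\{0,\pi\}$, giving at most two common elements.

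The genuinely delicate point, which I expect to be the main obstacle, is the sign bookkeeping in the middle paragraph: one must check that the factor $\cos(k\pi)=(-1)^{k}$ is correctly absorbed into the rotation so that the upper triangular part retains positive diagonal entries and the product genuinely sits in normalized left-translation form — only then may uniqueness of the $\mathrm{GL}_2^+(\mathbb R)$-decomposition be invoked to pass from the matrix identity to the scalar functional equations. The equation for $a$ is already extracted in the discussion preceding the statement via (\ref{equproduct}), which tracks only the action on $(1,0)^{t}$; the new content here is the equation for $b$, which emerges from the full second column of the product.
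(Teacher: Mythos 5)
Your argument is correct and is essentially the paper's own proof: the paper's equation (\ref{equproductmas}) asserts $T(t)K(r,k)=M(r,t+k\pi)$ as linear maps (tested on an arbitrary point of $Q^{\ast}$), and reading off the identities for $a$ and $b$ from that matrix equality is exactly your entrywise comparison via the unique decomposition $g=u\,R(t)\bigl(\begin{smallmatrix}k & l\\ 0 & k^{-1}\end{smallmatrix}\bigr)$. Your additional checks (the sign absorption $\cos(k\pi)R(t)=R(t+k\pi)$, the sharply transitive parametrization forcing the product to be $M(r,t+k\pi)$, and $|\mathcal T\cap\mathcal K|\le 2$) only make explicit what the paper leaves implicit.
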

\begin{proof} The point $(x,y)^t$ is the image of the point $(1,0)^t$ under the linear mapping $M_{(x,y)}$ and the set
$\{ M _{(x,y)}; (x,y)^t \in Q^{\ast } \}$ acts sharply transitively on
$Q^{\ast }$.
The matrix equation 
{\tiny \begin{equation} \label{equproductmas}
 \left(\!\!\begin{array}{rr}
\cos t a(1,t) & \cos t b(1,t)+ \sin t a^{-1}(1,t) \\
- \sin t  a(1,t) & - \sin t b(1,t)+ \cos t a^{-1}(1,t) \end{array}\!\!\right)
\left[\!\!\left(\!\!\begin{array}{cc}
r \cos (k \pi) a(r, k \pi) & r \cos (k \pi) b(r,k \pi) \\
0 & r \cos (k \pi) a^{-1}(r, k \pi) \end{array}\!\!\right)
\left(\!\!\begin{array}{r}
u \cos \varphi a(u, \varphi )\\
- u \sin \varphi a(u, \varphi ) \end{array}\!\!\right)\!\!\right]= \nonumber \end{equation}
\begin{equation}
=\left(\!\!\begin{array}{rr}
r \cos (t+ k \pi ) a(r,t+ k \pi ) & r \cos (t+ k \pi ) b(r,t+ k \pi )+ r \sin (t+ k \pi ) a^{-1}(r,t+ k \pi ) \\
- r \sin (t+ k \pi )  a(r,t+ k \pi) & - r \sin (t+ k \pi ) b(r,t+ k \pi )+ r \cos (t+ k \pi ) a^{-1}(r,t+k \pi ) 
\end{array}\!\!\right) \left(\!\!\begin{array}{r}
u \cos \varphi a(u, \varphi )\\
- u \sin \varphi a(u, \varphi ) \end{array}\!\!\right) \end{equation} }
holds precisely if the identities of the assertion are satisfied.
\end{proof}

\noindent
If $Q^{\ast }$ is decomposable such that its compact subloop has identity $(1,0)^t$,  then 
$|{\mathcal T} \cap {\mathcal K}| = 2$ because one has 
${\mathcal T} \cap {\mathcal K}=\left\{ I, \left(\begin{array}{cc}
-1 & - b(1,\pi) \\
0 & -1 \end{array} \right) \right\}$. 
In this case the set of all left translations of $Q^{\ast }$ is a product ${\mathcal T} {\cal W}$
with  ${\mathcal T} \cap {\cal W} = I$, where  $\cal{W}$ is the set of all left translations corresponding to the connected component of the kernel $K_r$ of $Q$. 
\begin{Theo} \label{quasisimple}
If the multiplicative loop $Q^{\ast }$ of a locally compact connected topological $2$-dimensional quasifield $Q$ with 
$(1,0)^t$ as identity of $Q^{\ast }$ is not quasi-simple, then
$Q^{\ast }$ is decomposable.
\end{Theo}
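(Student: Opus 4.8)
The plan is to read the assertion as a direct consequence of the two results that precede it. Both Theorem \ref{normal}, which characterises non-quasi-simplicity, and Proposition \ref{productloop}, which characterises decomposability, are stated as equivalences in terms of the same structure functions $a(u,t)$ and $b(u,t)$; hence the whole proof amounts to checking that the identities produced by the former are a special case of the identities required by the latter, after a separate treatment of the degenerate situation in which $Q^{\ast}$ is associative.

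First I would treat the principal case, in which $Q^{\ast}$ is a proper loop. Since $Q^{\ast}$ is not quasi-simple, Theorem \ref{normal} supplies, for all $r>0$, $\varphi \in [0,2\pi)$ and $k \in \{0,1\}$, the identities $a(r,k\pi)=1$, $b(r,k\pi)=0$, $a(r,\varphi+k\pi)=a(1,\varphi)$ and $b(r,\varphi+k\pi)=b(1,\varphi)$. Writing $\varphi=t$ and substituting into the two equations of Proposition \ref{productloop} gives the claim at once: the first equation $a(r,t+k\pi)=a(1,t)a(r,k\pi)$ has left-hand side $a(1,t)$ and right-hand side $a(1,t)\cdot 1$, while the second equation $b(r,t+k\pi)=a(1,t)b(r,k\pi)+a^{-1}(r,k\pi)b(1,t)$ has left-hand side $b(1,t)$ and right-hand side $a(1,t)\cdot 0+1\cdot b(1,t)$. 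Both identities therefore hold for all admissible $r,t,k$, and Proposition \ref{productloop} yields that $Q^{\ast}$ is decomposable.

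It remains to dispose of the possibility that $Q^{\ast}$ is a group. Then $Q^{\ast}$ is a connected locally compact $2$-dimensional topological group homeomorphic to $S^1 \times \mathbb R$; among the connected $2$-dimensional Lie groups only the abelian cylinder $\mathrm{SO}_2(\mathbb R) \times \mathbb R_{>0}$ has this homeomorphism type, so $Q^{\ast}$ is commutative and $Q$ is a $2$-dimensional locally compact division ring, hence the field $\mathbb C$ (cf. \cite{strambachplaumann} and the Corollary above). In this situation $a \equiv 1$ and $b \equiv 0$, so the compact factor ${\mathcal T}$ of (\ref{compactsection}) equals $\mathrm{SO}_2(\mathbb R)$, the set ${\mathcal K}$ consists of the real scalar matrices, their product exhausts the left translations of $Q^{\ast}$, and ${\mathcal T} \cap {\mathcal K}=\{ I,-I\}$ has two elements; thus $Q^{\ast}$ is decomposable by the Definition.

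The mathematical content is slight: no new estimate, differential inequality, or section-theoretic construction enters. The only points that need care are bookkeeping ones, namely matching the angular variable $\varphi$ of Theorem \ref{normal} to the variable $t$ of Proposition \ref{productloop} and controlling the boundary values at $t=k\pi$ through $a(r,k\pi)=1$ and $b(r,k\pi)=0$, and confirming that the associative alternative genuinely collapses to $\mathbb C$ so that no loop escapes the dichotomy. I therefore expect the proof to be essentially a two-line verification, preceded by the elimination of the group case.
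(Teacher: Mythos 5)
Your proof is correct and follows the paper's own route: the paper likewise deduces the identities $a(r,k\pi)=1$, $b(r,k\pi)=0$, $a(r,t+k\pi)=a(1,t)$, $b(r,t+k\pi)=b(1,t)$ from Theorem \ref{normal} and observes that they instantiate the conditions of Proposition \ref{productloop}. Your separate elimination of the associative case is a point of care the paper's two-line proof silently skips (Theorem \ref{normal} characterises only \emph{proper} non-quasi-simple loops), and it is handled correctly.
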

\begin{proof}
By Theorem \ref{normal} the loop $Q^{\ast }$ is not quasi-simple if and only if for all
$r > 0$, 
$t \in [0, 2 \pi)$, $k \in \{0, 1 \}$ one has  $a(r, k \pi) = 1$, $b(r, k \pi) = 0$, $a(r, t+ k \pi) = a(1, t)$ and 
$b(r, t+ k \pi) = b(1, t)$.
Therefore the identities given in the assertion of Proposition \ref{productloop} are satisfied.   
\end{proof}

According to Theorems \ref{normal}, \ref{quasisimple} and the above discussion  
the set $\Lambda _{Q^{\ast }}$ of the left translations of $Q^{\ast }$ with a normal subloop of positive dimension and with $(1,0)^t$ as identity can be written into the form
\begin{equation} \label{diagonal}
\left\{ \left( \begin{array}{cc}
\cos t & \sin t \\
-\sin t & \cos t \end{array} \right) \left( \begin{array}{cc}
u a(1,t) & u b(1,t)  \\
0 & u a^{-1}(1,t) \end{array} \right), u > 0, t \in [0, 2 \pi ) \right\} \end{equation}
with $a(1, k \pi )=1$, $b(1, k \pi)=0$, $k \in \{0, 1 \}$.

\begin{Prop}
The ${\cal C}^1$-differentiable multiplicative loop $Q^{\ast }$ of a locally compact connected topological $2$-dimensional quasifield $Q$ with 
$(1,0)^t$ as identity of $Q^{\ast }$ is decomposable precisely if for the inverse function
$\bar{a}(1, t)=a^{-1}(1, t)$ and for $\bar{b}(1, t)=- b(1, t)$ the differential inequalities
\begin{equation} \label{diffinequalityuj}
\bar{a}'^2 (1,t)+\bar{b}(1,t) \bar{a}'(1,t)+\bar{b}'(1,t) \bar{a}(1,t) -\bar{a}^2(1,t) <0,\ 
\bar{b}'(1, 0)< 1- \bar{a}'^2(1, 0)  \end{equation}
are satisfied.
\end{Prop}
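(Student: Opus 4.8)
The plan is to reduce the whole statement to the behaviour of the compact factor $\mathcal{T}$ of (\ref{compactsection}) at $u=1$, and then to quote the criterion for a $1$-dimensional $\mathcal{C}^1$-differentiable compact loop that was already isolated in the proof of Theorem \ref{compactkernel}. First I would use the definition of decomposability together with Proposition \ref{productloop}: the loop $Q^{\ast }$ is decomposable exactly when its left translations factor as $\mathcal{T}\mathcal{K}$ with $|\mathcal{T}\cap\mathcal{K}|\le 2$, and then the set $\mathcal{T}$ given by (\ref{compactsection}) at $u=1$ must itself be the set of all left translations of a genuine $1$-dimensional compact subloop $F_1$ of $Q^{\ast }$ with identity $(1,0)^t$. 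Because $Q^{\ast }$ is $\mathcal{C}^1$-differentiable, this subloop $F_1$ is $\mathcal{C}^1$-differentiable; conversely, once $F_1$ is a $\mathcal{C}^1$-differentiable compact loop the product $\mathcal{T}\mathcal{K}$ exhausts $\Lambda_{Q^{\ast }}$ in view of the product relations of Proposition \ref{productloop}. Thus the assertion collapses to the single equivalence that $F_1$ is a $1$-dimensional $\mathcal{C}^1$-differentiable compact loop if and only if (\ref{diffinequalityuj}) holds.

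For this equivalence I would take the section $\sigma_u$ of (\ref{sectionuuj}) at $u=1$, which is precisely the section attached to $F_1$; note that the matrices occurring there have determinant $1$, so the group topologically generated by the left translations of $F_1$ is ${\mathrm{SL}}_2(\mathbb R)$, exactly the setting of the criterion (\ref{diffinequality}). That criterion, established in the proof of Theorem \ref{compactkernel}, says that $\sigma_u$ yields a $1$-dimensional $\mathcal{C}^1$-differentiable compact loop if and only if the functions $\bar{a}_u(t)=a(u,0)a^{-1}(u,t)$ and $\bar{b}_u(t)=-b(u,t)$ satisfy (\ref{diffinequality}). Specializing to $u=1$ and invoking the normalization $a(1,0)=1$, one has $\bar{a}_1(t)=a^{-1}(1,t)=\bar{a}(1,t)$ and $\bar{b}_1(t)=-b(1,t)=\bar{b}(1,t)$, so that (\ref{diffinequality}) at $u=1$ is word for word the pair of inequalities (\ref{diffinequalityuj}). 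Both implications of the biconditional then follow simultaneously.

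The matching of (\ref{diffinequality}) with (\ref{diffinequalityuj}) is routine once the normalization $a(1,0)=1$ is used. The step that requires care, and which I expect to be the main obstacle, is the reduction in the first paragraph: one must verify that decomposability imposes no analytic condition beyond the differential inequalities on the single slice $u=1$, i.e. that the kernel factor $\mathcal{K}$ and the requirement $\mathcal{T}\mathcal{K}=\Lambda_{Q^{\ast }}$ contribute nothing new. This is exactly what the product relations of Proposition \ref{productloop} guarantee: they carry the entire $t$-dependence of every left translation through $a(1,t)$ and $b(1,t)$, so that a sharply transitive $\mathcal{C}^1$-differentiable section for $Q^{\ast }$ exists precisely when the one-parameter compact factor $F_1$ does, with no loss of information at values $u\neq 1$.
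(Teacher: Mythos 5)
Your proposal is correct and follows essentially the same route as the paper: decomposability is translated via Proposition \ref{productloop} into the product relations, and the criterion (\ref{diffinequality}) from the proof of Theorem \ref{compactkernel} is then specialized to the slice $u=1$, where the normalization $a(1,0)=1$ turns $\bar a_1(t)=a(1,0)a^{-1}(1,t)$ and $\bar b_1(t)=-b(1,t)$ into exactly the functions of (\ref{diffinequalityuj}). The paper merely inserts one intermediate step you omit, namely writing the $u=1$ inequality in the cleared-denominator form (\ref{diffinequalitymas}) before observing its equivalence with (\ref{diffinequalityuj}).
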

\begin{proof}
If $Q^{\ast }$ is a ${\cal C}^1$-differentiable multiplicative loop of a quasifield $Q$ with 
$(1,0)^t$ as identity, then the continuously differentiable functions 
${\bar a}_u(t):=a(u,0) a^{-1}(u, t)$, ${\bar b}_u(t):=-b(u,t)$ satisfy the conditions in (\ref{diffinequality}).
The set of all left translations of $Q^{\ast }$ is a product ${\mathcal T} {\mathcal K}$ if and only if one has 
$a(u, t+k \pi)= a(u,k \pi) a(1, t)$ and
$b(u, t+k \pi)=a(1, t) b(u,k \pi)+ a^{-1}(u,k \pi) b(1, t)$ for all $u>0$, $t \in [0,2 \pi)$, $k\in \{0,1\}$ 
(cf. Proposition \ref{productloop}). Putting this into 
(\ref{diffinequality}) we get for $u=1$ 
\begin{equation} \label{diffinequalitymas}
a'^2 (1,t)+ b(1,t) a'(1,t) a^2(1,t)- b'(1,t) a^3(1,t) - a^2(1,t) <0 \ \hbox{and} \  \nonumber \end{equation}
\begin{equation} b'(1,0)> a'^2(1,0)-1  \end{equation}
since $a(1,k \pi)=1$, $k\in \{0,1\}$. Inequalities (\ref{diffinequalitymas}) are equivalent to the inequalities 
(\ref{diffinequalityuj}) with 
$\bar{a}(1,t)=a^{-1}(1,t)$ and
$\bar{b}(1,t)=- b(1,t)$.
\end{proof}

\begin{Coro}
Let $T$ be any  $1$-dimensional ${\cal C}^1$-differentiable connected compact loop such that the set ${\mathcal T}$ of its left translations has the form \emph{(\ref{compactsection})} and let ${\mathcal K}$ be any set of matrices of the form
\begin{equation}
{\mathcal K}=  \left\{\left( \begin{array}{cc}
u \cos (k \pi) a(u, k \pi) & u \cos (k \pi ) b(u,k \pi) \\
0 &  u \cos (k \pi) a^{-1}(u, k \pi) \end{array} \right), u >0, k \in \{0, 1 \} \right\}, \nonumber \end{equation}
where  $a(u,k \pi)>0$ and $b(u,k \pi)$ are continuously differentiable functions such that  
$u a(u,0)$, $-u a(u,\pi)$ are strictly monotone. Then
the product ${\mathcal T} {\mathcal K}$ is the set of all left translations of a ${\cal C}^1$-differentiable decomposable multiplicative loop
$Q^{\ast }$ of a $2$-dimensional locally compact connected quasifield $Q$ with $(1,0)^t$ as identity of $Q^{\ast }$. 
\end{Coro}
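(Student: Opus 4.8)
The plan is to construct the decomposable loop from its invariants and then verify sharp transitivity through the two reductions established above. The Proposition preceding this Corollary shows that $\mathcal{C}^1$-decomposability of a multiplicative loop is equivalent to the differential inequalities (\ref{diffinequalityuj}) for $\bar a(1,t)=a^{-1}(1,t)$ and $\bar b(1,t)=-b(1,t)$, and these coincide with the compact-loop inequalities (\ref{diffinequality}) used in Theorem \ref{compactkernel}. Since $T$ is by hypothesis a $1$-dimensional $\mathcal{C}^1$ compact loop with left translations of the form (\ref{compactsection}), its functions $a(1,t)$, $b(1,t)$ satisfy exactly these inequalities together with the normalisations $a(1,k\pi)=1$, $b(1,0)=0$. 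I would therefore take $a(1,\cdot)$, $b(1,\cdot)$ to be the angular data supplied by $T$, and use the given boundary functions $a(u,k\pi)>0$, $b(u,k\pi)$ of $\mathcal{K}$ to extend to all $u>0$ and all $t$ by the identities of Proposition \ref{productloop}, setting
\[
a(u,t+k\pi):=a(1,t)\,a(u,k\pi),\qquad b(u,t+k\pi):=a(1,t)\,b(u,k\pi)+a^{-1}(u,k\pi)\,b(1,t).
\]
A first routine check confirms that the resulting $a(u,t)$ is positive and continuously differentiable, that $b(u,0)=0$, and that these functions determine a section of the form (\ref{section}) whose set of left translations is precisely the product $\mathcal{T}\mathcal{K}$; one verifies moreover that $\mathcal{T}\cap\mathcal{K}=\left\{I,\left(\begin{smallmatrix}-1 & -b(1,\pi)\\ 0 & -1\end{smallmatrix}\right)\right\}$, so that $|\mathcal{T}\cap\mathcal{K}|\le 2$.

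The core of the proof is to show that this section is sharply transitive, which by the determinant computation preceding Theorem \ref{compactkernel} splits into a radial and an angular requirement. For the radial part I would invoke that $f_1(u)=u\,a(u,0)$ and $f_2(u)=-u\,a(u,\pi)$ are, by hypothesis, strictly monotone; as observed in Section 4 this is exactly the condition under which the kernel matrices of $\mathcal{K}$ realise each nonzero real scalar, hence each point of the two kernel directions, exactly once, which forces $u=u_1^{-1}u_2$ in (\ref{equationmasodikuj}). For the angular part I would fix $u>0$, pass to the reduced section $\sigma_u$ of (\ref{sectionuuj}) with data $\bar a_u(t)=a(u,0)a^{-1}(u,t)$, $\bar b_u(t)=-b(u,t)$, and verify that these still satisfy the compact-loop inequalities (\ref{diffinequality}) for every $u$, so that each $\sigma_u$ yields a $1$-dimensional $\mathcal{C}^1$ compact loop homeomorphic to $S^1$. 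Granting both requirements, the section is sharply transitive and it is $\mathcal{C}^1$ by the reduction recorded before Theorem \ref{compactkernel}, since the radial solution $u=u_1^{-1}u_2$ and the implicitly defined angular solution $t_{(u_1,u_2)}$ are continuously differentiable. Hence $\mathcal{T}\mathcal{K}$ is the full set of left translations of a $\mathcal{C}^1$ multiplicative loop $Q^{\ast}$ of a $2$-dimensional locally compact connected quasifield $Q$ with identity $(1,0)^t$, and since the decomposability identities hold by construction, Proposition \ref{productloop} gives that $Q^{\ast}$ is decomposable.

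I expect the main obstacle to be precisely the angular requirement for all $u$, namely that each reduced section $\sigma_u$ remains a compact loop. The delicate point is that the kernel factors $a(u,k\pi)$, $b(u,k\pi)$ enter the data of $\sigma_u$ on the half $t\in[\pi,2\pi)$ through the $k=1$ identity, so one must check that the inequalities (\ref{diffinequality}) survive this $u$-dependent rescaling and that the pieces coming from $k=0$ and $k=1$ glue into a single-valued $\mathcal{C}^1$ section without destroying sharp transitivity; equivalently, that the positive- and negative-kernel sheets assemble consistently into one sharply transitive section. Everything else amounts to substituting the constructed functions into the already established criteria of Proposition \ref{productloop}, Theorem \ref{compactkernel} and the monotonicity discussion of Section 4.
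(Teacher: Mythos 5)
Your proposal follows essentially the same route as the paper: the paper's proof likewise computes the product of a $\mathcal{T}$-matrix with a $\mathcal{K}$-matrix to exhibit $\mathcal{T}\mathcal{K}$ as the image of a section of the form (\ref{section}) whose functions satisfy the identities of Proposition \ref{productloop}, and then argues that for each fixed $u$ the rescaled functions $\bar a_u(t+k\pi)=a(u,0)a^{-1}(u,k\pi)a^{-1}(1,t)$, $\bar b_u(t+k\pi)=-b(u,k\pi)a(1,t)-b(1,t)a^{-1}(u,k\pi)$ still satisfy the compact-loop inequalities (\ref{diffinequality}), with the radial part handled by the assumed strict monotonicity of $ua(u,0)$ and $-ua(u,\pi)$. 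The one step you flag as the "main obstacle" --- that the inequalities survive the $u$-dependent rescaling on the $k=1$ sheet --- is asserted rather than verified in the paper as well, so your account is at the same level of detail and with the same structure.
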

\begin{proof}
Since one has 
{\tiny \begin{equation} \label{productuj1}
\left( \begin{array}{cc}
\cos t & \sin t \\
-\sin t & \cos t \end{array} \right) \left( \begin{array}{cc}
a(1,t) & b(1,t) \\
0 & a(1,t)^{-1} \end{array} \right) \left( \begin{array}{cc}
u \cos (k \pi) a(u, k \pi) & u \cos (k \pi) b(u, k \pi) \\
0 & u \cos (k \pi) a^{-1}(u, k \pi) \end{array} \right)=  \nonumber \end{equation}
\begin{equation} \left( \begin{array}{cc}
\cos (t+ k \pi) & \sin (t+ k \pi) \\
-\sin (t+ k \pi) & \cos (t+ k \pi) \end{array} \right) \left( \begin{array}{cc}
u a(u, k \pi) a(1,t) & u b(u,k \pi) a(1,t)+ u b(1,t) a^{-1}(u, k \pi) \\
0 & u a^{-1}(u, k \pi) a(1,t)^{-1} \end{array} \right) = \nonumber \end{equation}
\begin{equation} \label{imagesection}
\left( \begin{array}{cc}
\cos (t+ k \pi) & \sin (t+ k \pi) \\
-\sin (t+ k \pi) & \cos (t+ k \pi) \end{array} \right) \left( \begin{array}{cc}
u a(u,t+ k \pi)  & u b(u,t+ k \pi) \\
0 & u a^{-1}(u,t+ k \pi) \end{array} \right), \nonumber \end{equation} }
for the continuously differentiable functions $a(1,t)$, $b(1,t)$ the inequalities (\ref{diffinequalitymas}) hold and 
$a(1,k \pi)=1$, $k \in \{0,1\}$, for 
$u=1$ the continuously differentiable functions $\bar{a}_u(t+k \pi)=a(u,0)a^{-1}(u,t+ k \pi)=a(u,0) a^{-1}(u,k \pi) 
a^{-1}(1,t)$,
$\bar{b}_u(t+k \pi)=-b(u,t+k \pi)=-b(u,k \pi) a(1,t)- b(1,t) a^{-1}(u,k \pi)$ satisfy inequalities 
(\ref{diffinequality}). Hence the product
${\mathcal T} {\mathcal K}$ given in the assertion is the image of a ${\cal C}^1$-differentiable section of a multiplicative loop $Q^{\ast }$ of a quasifield $Q$.
\end{proof}

\begin{Prop} \label{compactuj}
The set $\Lambda _{Q^{\ast }}$ of all left translations of the multiplicative loop $Q^{\ast }$ for a locally compact connected topological $2$-dimensional quasifield $Q$ with $(1,0)^t$ as identity of $Q^{\ast }$ contains the group 
${\mathrm{SO}}_2(\mathbb R)$ if and only if
$\Lambda _{Q^{\ast }}$ has the form
\begin{equation} \label{lambda} \Lambda _{Q^{\ast }}=\left\{ \left( \begin{array}{cc}
\cos t & \sin t \\
-\sin t & \cos t \end{array} \right) \left( \begin{array}{cc}
u a(u, 0) & u b(u,0) \\
0 & u  a^{-1}(u, 0) \end{array} \right), u >0, t \in [0, 2 \pi ) \right\}  \end{equation}
where $a(u,0)$, $b(u,0)$ are arbitrary continuous functions with $a(u,0)>0$
such that $u a(u,0)$ is strictly monotone. In this case $Q^{\ast }$ is decomposable. \end{Prop}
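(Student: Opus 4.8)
The plan is to prove the equivalence by treating the two implications separately, observing first that the asserted shape (\ref{lambda}) is nothing but the statement that $\Lambda_{Q^\ast}=\mathrm{SO}_2(\mathbb R)\cdot\mathcal K$, i.e. that $Q^\ast$ is decomposable with compact factor $\mathcal T$ of the form (\ref{compactsection}) equal to $\mathrm{SO}_2(\mathbb R)$ itself. Throughout I write $R_t$ for the rotation matrix appearing in (\ref{section}), $\Omega(u,t)=uR_t$, and $V_u=\left(\begin{smallmatrix} ua(u,0)&ub(u,0)\\ 0&ua^{-1}(u,0)\end{smallmatrix}\right)$ for the triangular factor of (\ref{lambda}).

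For the easy implication I assume $\Lambda_{Q^\ast}$ has the form (\ref{lambda}). Taking $u=1$ and using the normalization $a(1,0)=1$, $b(1,0)=0$ gives $V_1=I$, so all rotations $R_t$ lie in $\Lambda_{Q^\ast}$ and hence $\mathrm{SO}_2(\mathbb R)\subseteq\Lambda_{Q^\ast}$. The kernel of $Q$ is the $x$-axis, and an element $R_tV_u$ of (\ref{lambda}) has its first column on that axis exactly for $t\in\{0,\pi\}$, giving $V_u$ and $R_\pi V_u=-V_u$; thus the kernel translations are $\mathcal K=\{\pm V_u:u>0\}$. Since $-I\in\mathrm{SO}_2(\mathbb R)$ we get $\mathrm{SO}_2(\mathbb R)\cdot\mathcal K=\{R_tV_u\}=\Lambda_{Q^\ast}$, while $\mathrm{SO}_2(\mathbb R)\cap\mathcal K=\{I,-I\}$ because $\det V_u=u^2=1$ forces $u=1$. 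By the Definition this exhibits $Q^\ast$ as decomposable with $\mathcal T=\mathrm{SO}_2(\mathbb R)$.

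For the converse I first pin down the compact factor. If $R_\theta\in\Lambda_{Q^\ast}$, write it through (\ref{section}) as $\Omega(u,t)\,h(u,t)$ with $h(u,t)\in H$. Comparing determinants yields $u^2=1$, hence $u=1$, and then $R_t^{-1}R_\theta=h(1,t)$ must be upper triangular with positive diagonal, which forces $t=\theta$ and $h(1,\theta)=I$, that is $a(1,\theta)=1$, $b(1,\theta)=0$. As $\theta$ is arbitrary, $a(1,t)\equiv 1$ and $b(1,t)\equiv 0$, so the slice $u=1$ of $\Lambda_{Q^\ast}$ is precisely $\mathrm{SO}_2(\mathbb R)$. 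Because this is a group acting sharply transitively on the unit circle $C=\{(\cos t,-\sin t)^t:t\in[0,2\pi)\}$, the set $C$ is a $1$-dimensional compact subgroup of $Q^\ast$ isomorphic to $\mathrm{SO}_2(\mathbb R)$, and it is the candidate factor $\mathcal T$.

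It then remains to show that $\Lambda_{Q^\ast}$ is left invariant under $\mathrm{SO}_2(\mathbb R)$, equivalently that in the factorization $M_p=R_\phi U_p$ (rotation times upper triangular with positive diagonal) the triangular part $U_p$ depends only on the norm of $p$ and not on its angle $\phi$. Granting this, every $p$ factors as $p=c\ast k$, where $c\in C$ has $M_c=R_\theta$ and $k$ is the kernel point of the same norm on the positive $x$-axis; one then obtains $M_{c\ast k}=M_cM_k=R_\theta\,U_k$, and reading off $U_k$ as the corresponding kernel translation reproduces exactly (\ref{lambda}), decomposability following as in the first implication. I expect this last step to be the main obstacle, because left translations of a loop do not in general compose to left translations, so the identity $R_\theta M_k\in\Lambda_{Q^\ast}$ (equivalently the associativity $(c\ast k)\ast x=c\ast(k\ast x)$) is not formal; moreover the reduction of sharp transitivity to the mutually independent radial slices $\sigma_u$ does not by itself couple different values of $u$. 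To force the angle-independence I would therefore invoke the full quasifield structure rather than sharp transitivity alone—namely the spread conditions $(M_1)$, $(M_2)$ satisfied by $\mathcal M=\Lambda_{Q^\ast}\cup\{0\}$ together with the associativity (\ref{kernel}) carried by the subfield $K_r$—via a direct computation on the entries of $M_{c\ast k}$ in the spirit of the normality computation preceding Theorem \ref{normal}.
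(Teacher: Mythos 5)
Your first implication and the opening of the converse are sound: from (\ref{lambda}) one reads off $\mathrm{SO}_2(\mathbb R)\subseteq\Lambda_{Q^\ast}$ and the factorization $\Lambda_{Q^\ast}=\mathrm{SO}_2(\mathbb R)\,\mathcal K$ with $|\mathrm{SO}_2(\mathbb R)\cap\mathcal K|=2$, and your determinant argument showing that $R_\theta\in\Lambda_{Q^\ast}$ forces $a(1,\theta)=1$, $b(1,\theta)=0$ is correct. The problem is that you stop exactly where the content of the proposition lies. What must be shown is that $a(u,t)=a(u,0)$ and $b(u,t)=b(u,0)$ for \emph{every} $u>0$, i.e.\ that the triangular factor of a left translation depends only on its determinant; your computation controls only the slice $u=1$. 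You yourself diagnose why the remaining step is not formal --- sharp transitivity, via the reduction of (\ref{equationmasodikuj}), splits into independent conditions on the separate determinant slices, and $\lambda_{c\ast k}=\lambda_c\lambda_k$ is not a loop identity --- and then you merely announce that you ``would invoke'' the spread conditions and the kernel identities ``via a direct computation'', without performing it. A declared intention to compute is not a proof, and the tools you name are not obviously adequate: the defining conditions of the kernel constrain the \emph{right} translations $\rho_k$, which here are scalar matrices and therefore carry no information, whereas what you need is precisely the non-automatic statement that $R_\theta M_k$ is again a left translation for every $u$.

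For comparison, the paper's proof of this direction consists of asserting the very claim you leave open, namely that $\mathrm{SO}_2(\mathbb R)\subseteq\Lambda_{Q^\ast}$ forces $a(u,t)$ and $b(u,t)$ to be independent of $t$ for each fixed $u$; once that is granted, the identities of Proposition \ref{productloop} hold trivially (with $a(1,t)\equiv 1$, $b(1,t)\equiv 0$) and the form (\ref{lambda}) together with decomposability follows, while the converse reduces to checking that $u\,a(u,0)$ strictly monotone makes (\ref{lambda}) a sharply transitive set. So the single step you defer is the only step the argument actually consists of; without supplying it, or some genuine coupling between the determinant slices coming from the quasifield structure, your converse direction is empty.
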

\begin{proof}
If the set $\Lambda _{Q^{\ast }}$ contains the group ${\mathrm{SO}}_2(\mathbb R)$, then for each fixed $u >0$ the function $a(u,t)$ is constant with value $1$ and the function $b(u,t)$ is constant with value $0$. So the functions 
$a(u,t)=a(u,0)$, $b(u,t)=b(u,0)$ do not depend on the variable $t$. Hence the identities in Proposition \ref{productloop} are satisfied and the set $\Lambda _{Q^{\ast }}$ has the form ${\mathcal T} {\cal W}$ as in the assertion.

Conversely, if $u a(u,0)$ is a strictly monotone continuous function, then for arbitrary continuous functions 
$a(u,0), b(u,0)$ with $a(u,0)>0$ the set given by (\ref{lambda}) is the set $\Lambda _{Q^{\ast }}$ of all left translations of the multiplicative loop $Q^{\ast }$ of a locally compact quasifield such that $\Lambda _{Q^{\ast }}$ contains the group 
${\mathrm{SO}}_2(\mathbb R)$ and $Q^{\ast }$ has identity $(1,0)^t$.
Hence $Q^{\ast }$ is decomposable and the assertion is proved. \end{proof}

\centerline{\bf 6. Betten's classification of $4$-dimensional translation planes}

\bigskip
\noindent
Using $2$-dimensional spreads, Betten in \cite{betten}, \cite{betten2}, \cite{betten3}, \cite{betten4}, \cite{betten5}, \cite{betten6}, see also \cite{knarr} and \cite{ortleb}, has classified all locally compact $4$-dimensional translation planes which admit an at least $7$-dimensional collineation group. His normalized $2$-dimensional spreads are images of sharply transitive sections $\sigma ' : G/H' \to G$, where $G$ is the connected component of the group ${\mathrm{GL}}_2(\mathbb R)$, $H'$ is the subgroup $\left\{ \left( \begin{array}{cc}
1 & c \\
0 & d \end{array} \right), d>0, c \in \mathbb R \right\}$ (cf. \cite{betten1}, \cite{betten}) and $\sigma '(G/H')$ consists of the elements
\begin{equation} \label{bettensection}
\left( \begin{array}{cc}
\cos t & \sin t \\
-\sin t & \cos t \end{array} \right) \left( \begin{array}{cc}
r a(r, t) & 0 \\
0 & r^{-1} a^{-1}(r, t) \end{array} \right) \left( \begin{array}{cc}
1 & b(r,t) a^{-1}(r,t) \\
0 & r^2 \end{array} \right). \nonumber \end{equation}
With respect to the stabilizer $H=\left\{ \left( \begin{array}{cc}
a & b \\
0 & a^{-1} \end{array} \right), a>0, b \in \mathbb R \right\}$ the sharply transitive section $\sigma '$ transforms into a sharply transitive section $\sigma :G/H \to G$ given by (\ref{section}),
because the elements of $\sigma '(G/H')$ coincide with
\begin{equation} \left( \begin{array}{cc}
\cos t & \sin t \\
-\sin t & \cos t \end{array} \right) \left( \begin{array}{cc}
r  & 0 \\
0 & r \end{array} \right) \left( \begin{array}{cc}
a(r,t) & b(r,t)  \\
0 & a^{-1}(r,t) \end{array} \right). \nonumber \end{equation}

\begin{Prop} \label{prop1}
Let ${\cal A}$ be a $4$-dimensional non-desarguesian translation plane admitting an $8$-dimensional collineation group such that ${\cal A}$ is coordinatized by the locally compact topological quasifield $Q$. Then the multiplicative loop 
$Q^{\ast }$ can be given by one of the following sets $\Lambda _{Q^{\ast }}$ of the left translations of $Q^{\ast }$:

\smallskip
\noindent
a) $\Lambda _{Q^{\ast }}$ has the form (\ref{diagonal}) with $a(1, t)=1$ and $b(1, t)=0$ for $0 \le t \le \pi $, \ 
$a(1,t)=1/\sqrt{\cos^2 t+ \frac{ \sin^2 t}{w}}$ and
$b(1,t)=a(1,t) \frac{1-w}{w} \sin t \cos t$ for $\pi < t < 2 \pi $. The quasifields $Q_w$, $w>1$, correspond to a 
one-parameter family of planes ${\cal A}_w$.

\smallskip
\noindent
b) $\Lambda _{Q^{\ast }}$ is the range of the section given by (\ref{section}) such that for 
$\alpha \ge \frac{-3 \beta ^2}{4}$ one has
$a(r, t)= \frac{\sqrt{\alpha ^2 + \beta ^2}}{\alpha + \beta ^2}$ and 
$b(r,t)= \frac{ \beta (- \alpha +1)}{\sqrt{\alpha ^2 + \beta ^2}}$ with
$r \cos(t)=\alpha \frac{\alpha + \beta ^2}{\sqrt{\alpha ^2 + \beta ^2}}$, 
$r \sin(t)= -\beta \frac{\alpha + \beta ^2}{\sqrt{\alpha ^2 + \beta ^2}}$.
\newline
\noindent
For $\alpha < \frac{-3 \beta ^2}{4}$ we have $a(r,t)=3 \sqrt{\frac{\alpha ^2 + \beta ^2}{\alpha ^2}}$ and 
$b(r,t)= \beta \sqrt{\frac{\alpha ^2 + \beta ^2}{\alpha ^2}}+ \frac{\beta \alpha}{3 \sqrt{\alpha ^2( \alpha ^2 +\beta ^2)}}$
with 
$r \cos(t)=\frac{\alpha }{3} \sqrt{\frac{\alpha ^2}{\alpha ^2 + \beta ^2}}$, 
$r \sin(t)=-\frac{\beta }{3} \sqrt{\frac{\alpha ^2}{\alpha ^2 + \beta ^2}}$. The quasifield $Q$ coordinatizes a 
single plane.

\smallskip
\noindent
c) $\Lambda _{Q^{\ast }}$ is the range of the section given by (\ref{section}) such that
$a(r,t)= \sqrt{\frac{v^2+ s^2}{\frac{s^4}{3}+s^2 v + v^2}}$,
$b(r,t)=\frac{-\frac{s^3 v}{3}+s^3+s v}{\sqrt{(\frac{s^4}{3}+s^2 v + v^2)(s^2+ v^2)}}$ with
{\tiny \begin{equation} r \cos(t)=v \sqrt{\frac{\frac{s^4}{3}+s^2 v + v^2}{s^2+v^2}}, 
\ r \sin(t)=-s \sqrt{\frac{\frac{s^4}{3}+s^2 v + v^2}{s^2+v^2}}.
\nonumber \end{equation} }
The quasifield $Q$ coordinatizes a single plane.

\smallskip
\noindent
In case a) the multiplicative loop $Q^{\ast }_w$ is decomposable and a split extension of the normal subgroup 
$\widetilde{N^{\ast }} \cong \mathbb R$ corresponding to the connected component of 
$\widetilde{{\mathcal K}}=\left\{ \left(\begin{array}{cc}
r & 0 \\
0 & r \end{array} \right), 0 \neq r \in \mathbb R \right\}$
with a subloop homeomorphic to the $1$-sphere.
In case c) the set ${\mathcal K}$ of the left translations of $Q^{\ast }$ corresponding to the kernel $K_r$ of the quasifield $Q$ has the form $\widetilde{{\mathcal K}}$. In case b) the set ${\mathcal K}$ has the form 
\begin{equation} \left\{ \left(\begin{array}{cc}
\alpha & 0 \\
0 & \alpha \end{array} \right), \alpha >0 \right\} \cup \left\{ \left(\begin{array}{cc}
\alpha & 0 \\
0 & \frac{\alpha }{9} \end{array} \right), \alpha <0 \right\}. \nonumber \end{equation}
In cases b) and c) the multiplicative loops $Q^{\ast }$ are not decomposable and quasi-simple.
\end{Prop}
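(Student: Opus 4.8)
The plan is to begin from Betten's list of the $8$-dimensional cases and to transport his normal forms into the coordinates of Section~4, after which every structural assertion follows from Theorems~\ref{normal} and~\ref{quasisimple} and from Proposition~\ref{productloop}. The enumeration a), b), c) itself is not to be derived from scratch: its exhaustiveness is Betten's classification, and my task is to exhibit the functions $a(r,t)$, $b(r,t)$ explicitly and to read off the loop-theoretic consequences.

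First I would recall from Betten's classification the three spread sets that occur when $\Gamma$ is $8$-dimensional: the reducible case, in which the translation complement is $\mathrm{GL}_2(\mathbb R)$ and acts reducibly on $\mathbb R^4$, and the two irreducible cases. Each is presented as the image of a sharply transitive section $\sigma'\colon G/H'\to G$ of the shape recorded at the beginning of this section. Using the matrix identity displayed immediately above the proposition, which rewrites every element of $\sigma'(G/H')$ with respect to the stabilizer $H$, I transport $\sigma'$ into a section $\sigma$ of the form (\ref{section}) carrying the \emph{same} pair of functions $a(r,t)$, $b(r,t)$. Reparametrizing the plane by the point map $(r\cos t\,a(r,t),-r\sin t\,a(r,t))^t$ --- by $(\alpha,\beta)$ in b) and by $(s,v)$ in c) --- then yields the explicit formulas of a), b), c). The bulk of the work, and the main obstacle, lies in matching Betten's parameters to $(r,t)$ and in handling the two branches forced in b) by the locus $\alpha=-\tfrac34\beta^2$; here one must keep careful track of signs and of which branch contains the kernel axis.

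For case a) the transported functions depend only on $t$, the scalar $u$ factoring out, and one reads off $a(1,k\pi)=1$, $b(1,k\pi)=0$ for $k\in\{0,1\}$. Thus $\Lambda_{Q^{\ast}_w}$ is exactly of the form (\ref{diagonal}), so Theorem~\ref{normal} applies verbatim: $Q^{\ast}_w$ is proper and not quasi-simple, its unique positive-dimensional normal subloop is the connected component $\widetilde{N^{\ast}}\cong\mathbb R$ of $\widetilde{\mathcal K}$, and $Q^{\ast}_w$ is the split extension of $\widetilde{N^{\ast}}$ by a subloop homeomorphic to $S^1$. Decomposability is then immediate, either from the product shape of (\ref{diagonal}) or directly from Theorem~\ref{quasisimple}.

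For cases b) and c) I would first determine $\mathcal K$, the set of kernel left translations $M(r\cos(k\pi)a(r,k\pi),0)$, by restricting the section to the kernel axis $t=k\pi$; this forces $\beta=0$ in b) and $s=0$ in c). In c) one then finds $a(r,k\pi)=1$ and $b(r,k\pi)=0$, so by Remark~\ref{diagonaluj} the kernel translations are scalar and $\mathcal K=\widetilde{\mathcal K}$; in b) the branch $\alpha>0$ gives $a(r,0)=1$ while the branch $\alpha<0$ gives $a(r,\pi)=3$, whence the kernel matrix attached to $\alpha<0$ is $\mathrm{diag}(\alpha,\alpha/9)$ and $\mathcal K$ acquires the stated shape. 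Finally, to prove non-decomposability it suffices by Proposition~\ref{productloop} to contradict the identity $a(r,t)=a(1,t)\,a(r,0)$; since $a(r,0)=1$ in both cases, this reduces to $a(r,t)=a(1,t)$, i.e. to $a$ being independent of $r$. But the explicit formulas show that $a(r,t)$ is not homogeneous of degree zero in Betten's parameters --- scaling $(\alpha,\beta)$, respectively $(s,v)$, fixes $t$ but changes $r$ and $a$ --- so for fixed $t$ the value genuinely varies with $r$ and the identity fails. Non-decomposability then yields quasi-simplicity through the contrapositive of Theorem~\ref{quasisimple}, which completes all the claims.
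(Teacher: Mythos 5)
Your proposal is correct and follows essentially the same route as the paper: transport Betten's three $8$-dimensional spreads (reducible, irreducible, solvable translation complement) into the section form (\ref{section}), read off $a(r,t)$, $b(r,t)$, apply Theorems \ref{normal} and \ref{quasisimple} in case a), use Remark \ref{diagonaluj} and the values $a(r,k\pi)$, $b(r,k\pi)$ for $\mathcal K$, and refute the identity of Proposition \ref{productloop} in cases b) and c). The only cosmetic difference is that you establish the $r$-dependence of $a(r,t)$ by a scaling argument in Betten's parameters, whereas the paper evaluates $a$ at specific parameter values; both verify the same failure of $a(r,t)=a(1,t)\,a(r,0)$.
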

\begin{proof} If the translation complement of ${\cal A}$ is the group ${\mathrm{GL}}_2(\mathbb R)$ and acts reducibly on
$\mathbb R^4$, then one obtains the  one-parameter family ${\cal A}_w$, $w>1$, of the non-desarguesian translation planes corresponding to the following spreads:
\begin{equation} \label{spread1}
\{ S \} \cup \left\{ \left( \begin{array}{cc}
s & -v \\
v & s \end{array} \right), s, v \in \mathbb R, v \ge 0 \right\} \cup \left\{ \left( \begin{array}{cc}
s & \frac{-v}{w} \\
v & s \end{array} \right), s, v \in \mathbb R, v < 0 \right\},  \nonumber \end{equation}
$w > 1$ (cf. \cite{betten}, Satz 5, p. 144).
Any such spread coincides with the set $\Lambda_{Q^{\ast}}$ in (\ref{diagonal}) with $a(1,t)$ and $b(1,t)$ as in  assertion a). 
By Theorem \ref{normal}
the multiplicative loop $Q^{\ast }_w$ is a split extension of a normal subgroup $N^{\ast }$ with a $1$-dimensional compact loop.
By Theorem \ref{quasisimple} the loop $Q^{\ast }_w$ is decomposable. 
Hence $\widetilde{N^{\ast }}$ has the form as in the assertion.

\noindent
If the translation complement ${\mathrm{GL}}_2(\mathbb R)$ acts irreducibly on $\mathbb R^4$, then one obtains a single plane ${\cal A}$ generated by the spread
{\tiny \begin{equation} \label{spread2}
\{ S \} \cup \left\{ \left( \begin{array}{cc}
\alpha  & -\alpha \beta -\beta^3  \\
\beta  & \alpha + \beta^2 \end{array} \right), \alpha, \beta \in \mathbb R, \alpha \ge \frac{-3 \beta^2}{4} \right\} \cup 
\left\{ \left( \begin{array}{cc}
\alpha  & \frac{1}{3} \alpha \beta \\
\beta & \frac{\alpha }{9}+ \frac{\beta ^2}{3} \end{array} \right), \alpha, \beta \in \mathbb R, \alpha < \frac{-3 \beta ^2}{4} \right\},  \end{equation} }
(cf. \cite{betten3}, Satz, p. 553).

\noindent
If the translation complement is solvable, then one gets a single plane ${\cal A}$ generated by the spread
\begin{equation} \label{spread3}
\{ S \} \cup \left\{ \left( \begin{array}{cc}
v  & -\frac{s^3}{3} \\
s      & s^2+ v \end{array} \right), s, v \in \mathbb R \right\},   \end{equation}
(cf.  \cite{betten2},  Satz  2 (b), p. 331).

\noindent
The spread (\ref{spread2}), respectively (\ref{spread3}) coincides with the image of the section $\sigma $ in 
(\ref{section}) with the well defined functions $a(r,t)$ and $b(r,t)$ given in assertion b), respectively c). In case c) one has $a(r,k \pi )=1$, $b(r,k \pi )=0$, for all $r>0$, $k \in \{0, 1\}$ hence Remark \ref{diagonaluj} gives the form 
${\mathcal K}$ of the assertion. In case b) we have $a(r,0)=1$, $a(r, \pi)=3$, $b(r,l \pi )=0$,  for all $r>0$, 
$l \in \{0, 1 \}$. 
These give the form ${\mathcal K}$ of the assertion.

\noindent
For  decomposable $Q^{\ast }$, the identity $a(r,t+ k \pi)=a(1,t) a(r,k \pi)$ holds for all
$r > 0$, $t \in [0, 2 \pi)$, $k \in \{0, 1\}$ (cf. Proposition \ref{productloop}). In case b) for
$-3 \le \alpha \le 1$ one has $a(1,t)=\sqrt{\alpha ^2 - \alpha +1}$
which yields a contradiction. In case c) we have $a(r, \frac{\pi}{4}+ k \pi)= \sqrt{ \frac{2}{1-s + \frac{s^2}{3}}}$,
$s \in \mathbb R \setminus \{ 0 \}$ and the condition $a(r, \frac{\pi}{4}+ k \pi)= a(1, \frac{\pi}{4})$ gives a contradiction.
Hence in both cases $Q^{\ast }$ is not decomposable and therefore quasi-simple (cf. Theorem \ref{quasisimple}).
\end{proof}

\medskip
\noindent
If the translation complement of a $4$-dimensional topological plane ${\cal A}$ has dimension $3$, then the point 
$\infty $ of the line
$S=\{(0,0,u,v), u,v \in \mathbb R\}$ is fixed under the seven-dimensional collineation group $\Gamma $ of ${\cal A}$.

\begin{Prop} \label{prop2}
Let $Q$ be a $2$-dimensional quasifield coordinatizing a $4$-dimensional locally compact translation plane ${\cal A}$ such that the $7$-dimensional collineation group $\Gamma $ of ${\cal A}$ acts transitively on the points of
$W \setminus \{ \infty \}$, where $W$ is the translation axis of ${\cal A}$ and the kernel of the action of the translation complement on the line $S$ has dimension $1$.  Then the multiplicative loop $Q^{\ast }$  can be given by one of the following sets $\Lambda _{Q^{\ast }}$ of the left translations of $Q^{\ast }$:

\smallskip
\noindent
a) $\Lambda _{Q^{\ast }}$ is the range of the section (\ref{section}) such that
$$a(r,t)= \sqrt{\frac{s^2+v^2}{s^2 v+ v^2+ \frac{s^4}{3}+ s^2}} \ \hbox{and} \
b(r,t)=\frac{s^3-\frac{s^3v}{3}}{\sqrt{(s^2 v+ v^2+ \frac{s^4}{3}+ s^2)(s^2+v^2)}}$$
with $r \cos(t)=v \sqrt{\frac{s^2 v+ v^2+ \frac{s^4}{3}+ s^2}{s^2+v^2}}$, $r \sin(t)=-s \sqrt{\frac{s^2 v+ v^2+ \frac{s^4}{3}+ s^2}{s^2+v^2}}$.
The quasifield $Q$ corresponds to a single plane.

\smallskip
\noindent
b) $\Lambda _{Q^{\ast }}$ is the range of the section given by (\ref{section}) such that
{\tiny \begin{equation} a(r,t)= \sqrt{\frac{v^2+u^2+2 \gamma ^2(1-\cos(u))-2 v \gamma \sin(u)-2 \gamma u \cos(u)+ 2 \gamma u}{v^2+ u^2 - 2 \gamma ^2 + 2 \gamma ^2 \cos(u) }} \ \hbox{and} \nonumber \end{equation}
\begin{equation} b(r,t)=\frac{-2 u \gamma \sin u+ 2 v \gamma \cos u- 2 v \gamma}{\sqrt{v^2+u^2+2 \gamma ^2(1-\cos u)-2 v \gamma \sin u-2 \gamma u \cos u+ 2 \gamma u} \sqrt{v^2+u^2-2 \gamma ^2(1-\cos u)}} \nonumber \end{equation} }
with
{\tiny \begin{equation} r \cos(t)=(v- \gamma \sin (u))  \sqrt{\frac{v^2+ u^2 - 2 \gamma ^2 + 2 \gamma ^2 \cos u}{v^2+u^2+2 \gamma ^2(1-\cos(u))-2 v \gamma \sin(u)-2 \gamma u \cos(u)+ 2 \gamma u}}, \nonumber \end{equation}
\begin{equation} r \sin(t)=(u- \gamma (\cos(u)-1)) \sqrt{\frac{v^2+ u^2 - 2 \gamma ^2 + 2 \gamma ^2 \cos u}{v^2+u^2+2 \gamma ^2(1-\cos(u))-2 v \gamma \sin(u)-2 \gamma u \cos(u)+ 2 \gamma u}}. \nonumber \end{equation} } 
\noindent
The quasifields
$Q_{\gamma }$ coordinatize a one-parameter family of planes
${\cal A}_{\gamma}, 0 < |\gamma | \le 1$.

\smallskip 
\noindent
In all cases the multiplicative loop $Q^{\ast }$ is not decomposable and quasi-simple. The set ${\mathcal K}$ of the left translations of $Q^{\ast }$ corresponding to the kernel of the quasifield $Q$ has the form 
$\left\{ \left( \begin{array}{cc}
r & 0 \\
0 & r \end{array} \right), 0 \neq r \in \mathbb R \right\}$.
\end{Prop}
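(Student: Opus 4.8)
The plan is to follow the same scheme as in the proof of Proposition \ref{prop1}: read off from Betten's classification the explicit spreads admitted under the present hypotheses, rewrite each spread matrix in the factored form appearing in (\ref{section}) to extract the functions $a(r,t)$ and $b(r,t)$, and then test the decomposability criterion of Proposition \ref{productloop}. Under the assumptions that $\Gamma$ is $7$-dimensional, acts transitively on $W \setminus \{\infty\}$, and that the kernel of the action of the translation complement on $S$ is $1$-dimensional, Betten's results single out two possibilities: a single plane whose translation complement is solvable, and a one-parameter family $\mathcal{A}_\gamma$ governed by a unipotent one-parameter subgroup whose orbit structure produces the dependence on $\cos u$, $\sin u$ in the parameter $u$. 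I would cite the corresponding theorems of Betten (e.g. \cite{betten2}, \cite{betten4}, \cite{betten5}) and record the two spread matrices explicitly; in case a) this matrix is $\begin{pmatrix} v & -s^3/3 - s \\ s & s^2 + v \end{pmatrix}$.

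Second, I would carry out the factorization. Writing a spread matrix $N \in \mathrm{GL}_2^+(\mathbb{R})$ as the product of a scaled rotation and an upper-triangular $\mathrm{SL}_2$-matrix as in (\ref{quasifieldmultiplication}), one matches determinants to get $r^2 = \det N$, matches the first column to obtain $a = \sqrt{(N_{11}^2 + N_{21}^2)/\det N}$ together with $r\cos t = N_{11}/a$ and $r\sin t = -N_{21}/a$, and finally reads the off-diagonal entry to get $b = (N_{11}N_{12} + N_{21}N_{22})/(a\,\det N)$. Substituting Betten's entries and simplifying the resulting rational (respectively, trigonometric) expressions yields precisely the formulas for $a(r,t)$ and $b(r,t)$ claimed in a) and b). This algebraic simplification is where the bulk of the work lies, and I expect it to be the main obstacle, especially in case b), where the entries are transcendental in $u$ and the cancellations that recover the stated closed forms are delicate.

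Third, the three structural assertions. To identify $\mathcal{K}$ I would specialise to the kernel direction, that is, to $s = 0$ in a) and $u = 0$ in b); a direct substitution gives $a(r, k\pi) = 1$ and $b(r, k\pi) = 0$ for all $r > 0$ and $k \in \{0,1\}$, so Remark \ref{diagonaluj} forces $\mathcal{K}$ to be the group of scalar matrices $\{\, r I : 0 \neq r \in \mathbb{R}\,\}$. For non-decomposability I would invoke Proposition \ref{productloop}: since $a(r, k\pi) = 1$, decomposability would require $a(r, t) = a(1, t)$, i.e. $a$ to be independent of $r$ along each fixed direction. Fixing a convenient angle (for instance $t = \pi/4$, which amounts to fixing the ratio of $N_{21}$ to $N_{11}$) and letting the radial parameter vary, the explicit formula for $a$ becomes a genuinely non-constant function of that parameter, and this contradiction shows that $Q^{\ast}$ is not decomposable. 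Finally, quasi-simplicity is immediate from the contrapositive of Theorem \ref{quasisimple}, according to which a loop that fails to be quasi-simple must be decomposable.
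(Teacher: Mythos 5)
Your proposal follows essentially the same route as the paper: extract the two spreads from Betten's classification (the single plane with spread matrices $\bigl(\begin{smallmatrix} v & -s^3/3-s \\ s & s^2+v\end{smallmatrix}\bigr)$ and the family $\mathcal{A}_\gamma$), factor each spread matrix into the form of (\ref{section}) to read off $a(r,t)$ and $b(r,t)$, apply Remark \ref{diagonaluj} to identify $\mathcal{K}$, refute decomposability by fixing an angle (the paper uses $t=\pi/4+k\pi$ in case a) and $v=1$ in case b)) and observing that $a$ then still varies, contradicting Proposition \ref{productloop}, and conclude quasi-simplicity from Theorem \ref{quasisimple}. Your explicit general factorization formulas ($a=\sqrt{(N_{11}^2+N_{21}^2)/\det N}$, $b=(N_{11}N_{12}+N_{21}N_{22})/(a\det N)$) are correct and in fact make the verification step more transparent than the paper's bare assertion that the spreads "coincide with the image of the section".
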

\begin{proof}
If the translation complement $C$ leaves a $1$-dimensional subspace of $S$ invariant, then one obtains a single plane 
${\cal A}$ which corresponds to the following spread:
\begin{equation} \label{spread4}
\{ S \} \cup \left\{ \left( \begin{array}{cc}
v  & -\frac{s^3}{3}- s \\
s  & s^2+ v  \end{array} \right), s, v \in \mathbb R \right\} \end{equation}
(cf.  \cite{salzmann}, \ 73.10.,  \cite{betten2}, pp. 330-331).

\noindent
If the translation complement acts transitively on the $1$-dimensional subspaces of $S$, then one gets a
one-parameter family $E_{\gamma}, 0 < |\gamma | \le 1$, of planes which are  generated by the normalized spread
\begin{equation} \label{spreadgamma}
\{ S \} \cup \left\{ \left( \begin{array}{cc}
v- \gamma \sin u & u+ \gamma (\cos u-1) \\
\gamma (\cos u-1)- u & v+ \gamma \sin u \end{array} \right), u, v \in \mathbb R \right\},  \end{equation}
(\cite{betten6}, Satz, p. 128, \  \cite{knarr}, Proposition  5.8).
The spread (\ref{spread4}), respectively (\ref{spreadgamma}) coincides with the image of the section $\sigma $ in 
(\ref{section})  such that the well defined functions $a(r,t)$ and $b(r,t)$ are given in assertion a), respectively b).
Since in both cases  one has
$a(r,k \pi)=1, b(r,k \pi)=0$, for all $r>0$, $k \in \{0, 1\}$, Remark \ref{diagonaluj} gives the form of ${\mathcal K}$.
Moreover, in case a) one has
$a(r, \frac{\pi}{4}+ k \pi)= \sqrt{ \frac{2}{2+ v + \frac{v^2}{3}}},\ v \in \mathbb R \setminus \{ 0 \}$. In case b) for 
$v=1$ we get
{\tiny \begin{equation} \label{ujequuj}
a(r_j,t_j)= \sqrt{\frac{1+u^2+2 \gamma ^2(1-\cos u)-2 \gamma \sin u-2 \gamma u \cos u+ 2 \gamma u}{1 + u^2 - 2 \gamma ^2 + 2 \gamma ^2 \cos u }}, \ \ \ a(1,t_j)=1. \nonumber \end{equation} }
For decomposable $Q^{\ast }$ one has
$a(r, t+ k \pi)= a(1, t) a(r, k \pi)$ for all $r \in \mathbb R \setminus \{ 0 \}$, $t \in [0, 2 \pi)$, $k \in \{0, 1\}$  
(cf. Proposition \ref{productloop}) which yields a contradiction.
Thus in both cases $Q^{\ast }$ is not decomposable and hence quasi-simple (cf. Theorem \ref{quasisimple}).
\end{proof}

\begin{Prop} \label{prop3}
Let $Q$ be a $2$-dimensional quasifield coordinatizing a $4$-dimensional locally compact translation plane ${\cal A}$
such that the translation complement $C$ of the $7$-dimensional collineation group $\Gamma $ of ${\cal A}$ has an orbit of dimension $1$ on
$W \setminus \{ 0 \}$, $C$ leaves in the set of lines through the origin only $S$ fixed and the kernel of its action on 
$S$ has positive dimension. Then the multiplicative loop $Q^{\ast }$ can be given by one of the following sets 
 $\Lambda _{Q^{\ast }}$ of the left translations of $Q^{\ast }$:

\smallskip
\noindent
a) $\Lambda _{Q^{\ast }}$ is the range of the section (\ref{section}) such that for $\beta \ge 0$ one has
{\tiny \begin{equation}
a(r,t)= \sqrt{\frac{\alpha ^2+ \beta^2}{\alpha ^2 + z \alpha \beta^{\frac{1}{1+s}}- w \beta^{\frac{2}{1+s}}}} \ \hbox{and} \
b(r,t)=\frac{w \alpha \beta ^{\frac{1-s}{1+s}}+ \alpha \beta + z \beta ^{\frac{2+s}{1+s}}}{\sqrt{\alpha ^2+ \beta^2}
\sqrt{\alpha ^2 + z \alpha \beta^{\frac{1}{1+s}}- w \beta^{\frac{2}{1+s}}}} \nonumber \end{equation} }
{\tiny \begin{equation} \hbox{with} \  r \cos(t)=\alpha \sqrt{\frac{\alpha ^2 + z \alpha \beta^{\frac{1}{1+s}}- 
w \beta^{\frac{2}{1+s}}}{\alpha ^2+ \beta^2}}, \
r \sin(t)=- \beta \sqrt{\frac{\alpha ^2 + z \alpha \beta^{\frac{1}{1+s}}- w \beta^{\frac{2}{1+s}}}{\alpha ^2+ \beta^2}}. \nonumber \end{equation}}
\newline
\noindent
For $\beta <0$ one gets
{\tiny \begin{equation}
a(r',t)= \sqrt{\frac{\alpha ^2+ \beta^2}{\alpha ^2 + q \alpha (-\beta)^{\frac{1}{1+s}}+ p (-\beta)^{\frac{2}{1+s}}}} \ \hbox{and} \
b(r',t)=\frac{p \alpha (-\beta )^{\frac{1-s}{1+s}}+ \alpha \beta -q (-\beta )^{\frac{2+s}{1+s}}}{\sqrt{\alpha ^2+ \beta^2} \sqrt{\alpha ^2 + q \alpha (-\beta )^{\frac{1}{1+s}}+p (-\beta )^{\frac{2}{1+s}}}} \nonumber \end{equation} }
with
{\tiny \begin{equation}
r' \cos(t)=\alpha \sqrt{\frac{\alpha ^2 + q \alpha (-\beta )^{\frac{1}{1+s}}+p (-\beta )^{\frac{2}{1+s}}}{\alpha ^2+ \beta^2}} \ \hbox{and} \
r' \sin(t)=- \beta \sqrt{\frac{\alpha ^2 + q \alpha (-\beta )^{\frac{1}{1+s}}+p (-\beta )^{\frac{2}{1+s}}}{\alpha ^2+ \beta^2}}. \nonumber \end{equation} }

\noindent
The quasifields $Q_{s,w,z,p,q}$ coordinatize a family of planes
${\cal A}_{s,w,z,p,q}$ such that the parameters $s,w,z,p,q$ satisfy the conditions
$0 < s <1$, $z^2+ 4 w(1-s^2) \le 0$, $q^2-4 p(1-s^2) \le 0$.

\smallskip
\noindent
b) $\Lambda _{Q^{\ast }}$ is the range of the section (\ref{section}) such that for $\beta \ge 0$ we have
{\tiny \begin{equation} a(r,t)= \sqrt{\frac{\alpha ^2+ \beta^2}{\alpha ^2 + z \alpha \beta - w \beta^2 + 2 \alpha \beta \ln \beta + z \beta ^2 \ln \beta +
\beta ^2 (\ln \beta )^2}} \ \hbox{and} \nonumber \end{equation}
\begin{equation}  b(r,t)=\frac{(w+1) \alpha \beta + z \beta ^2 - z \alpha \beta \ln \beta - \alpha \beta (ln \beta )^2 + 2 \beta ^2 \ln \beta}{\sqrt{\alpha ^2+ \beta^2} \sqrt{\alpha ^2 + z \alpha \beta + 2 \alpha \beta \ln \beta - w \beta ^2+ z \beta ^2 \ln \beta + \beta ^2 (\ln \beta )^2}} \nonumber \end{equation} }
with
{\tiny \begin{equation}
r \cos(t)= \alpha \sqrt{\frac{\alpha ^2 + z \alpha \beta - w \beta^2 + 2 \alpha \beta \ln \beta + z \beta ^2 \ln \beta +
\beta ^2 (\ln \beta )^2}{\alpha ^2+ \beta^2}}, \nonumber \end{equation}
\begin{equation}
r \sin(t)= -\beta \sqrt{\frac{\alpha ^2 + z \alpha \beta - w \beta^2 + 2 \alpha \beta \ln \beta + z \beta ^2 \ln \beta +
\beta ^2 (\ln \beta )^2}{\alpha ^2+ \beta^2}}. \nonumber \end{equation} }
For $\beta <0$ we obtain
{\tiny \begin{equation} a(r',t)= \sqrt{\frac{\alpha ^2+ \beta^2}{\alpha ^2 - q \alpha \beta + p \beta ^2 + (2 \alpha \beta - q \beta ^2) \ln (- \beta ) +
\beta ^2 (\ln (- \beta ))^2}}  \ \hbox{and} \nonumber \end{equation}
\begin{equation} b(r',t)=\frac{(1-p) \alpha \beta - q \beta ^2+ (2 \beta ^2+ q \alpha \beta ) \ln (- \beta ) - \alpha \beta (\ln (- \beta ))^2}{\sqrt{\alpha ^2+ \beta^2} \sqrt{\alpha ^2 - q \alpha \beta + p \beta ^2 + (2 \alpha \beta - q \beta ^2) \ln (- \beta ) + \beta ^2 (\ln (- \beta ))^2}} \nonumber \end{equation} }
with
{\tiny \begin{equation}
r' \cos(t)= \alpha \sqrt{\frac{\alpha ^2 - q \alpha \beta + p \beta ^2 + (2 \alpha \beta - q \beta ^2) \ln (- \beta ) +
\beta ^2 (\ln (- \beta ))^2}{\alpha ^2+ \beta^2}}, \nonumber \end{equation}
\begin{equation}
r' \sin(t)= -\beta \sqrt{\frac{\alpha ^2 - q \alpha \beta + p \beta ^2 + (2 \alpha \beta - q \beta ^2) \ln (- \beta ) +
\beta ^2 (\ln (- \beta ))^2}{\alpha ^2+ \beta^2}}. \nonumber \end{equation} }
The quasifields $Q_{w,z,p,q}$ coordinatize a family of planes
${\cal A}_{w,z,p,q}$ such that for the parameters $w,z,p,q$ the relations $\left( \frac{z}{2} \right)^2 \le -w-1$, $\left( \frac{q}{2} \right)^2 \le p-1$ hold.

\smallskip
\noindent
c) $\Lambda _{Q^{\ast }}$ is the range of the section given by (\ref{section}) such that
$a(r,k \pi)= 1$ and $b(r,k \pi)=0$, $k \in \{0, 1\}$ with $r=|\beta |$. 
\newline
\noindent
For $u \in \mathbb R$, $\beta >0$, we get
{\tiny \begin{equation}
a(r,t)=\sqrt{\frac{u^2+ \sin ^2(l)(w^2+2 z u+ z^2) + \cos^2(l)- (2 u w+ 2 u+ 2 z) \sin (l) \cos (l)}{u^2+ u z -w}}, \nonumber \end{equation}
\begin{equation}
b(r,t)= \frac{\cos^2(l)(2 u w+2 u + 2 z)+ \sin(l) \cos(l)(1-w^2-z^2-2 u z)-(u+z+u w)}{\sqrt{(u^2+ \sin ^2(l)(w^2+2 z u+ z^2) + \cos^2(l)- (2 u w+ 2 u+ 2 z) \sin (l) \cos (l))(u^2+ uz -w)}}  \nonumber \end{equation} }
with
{\tiny \begin{equation}
r \cos(t)= \beta \left(u- (w+1) \sin(l) \cos(l) + z \sin ^2(l) \right) \sqrt{\frac{u^2+ uz -w}{u^2+ \sin ^2(l)(w^2+2 z u+ z^2) + \cos^2(l)- (2 u w+ 2 u+ 2 z) \sin (l) \cos (l)}}, \nonumber \end{equation}
\begin{equation}
r \sin(t)= \beta \left(w \sin^2(l)+ z \sin(l) \cos(l)- \cos^2(l) \right) \sqrt{\frac{u^2+ u z -w}{u^2+ \sin ^2(l)(w^2+2 z u+ z^2) + \cos^2(l)- (2 u w+ 2 u+ 2 z) \sin (l) \cos (l)}}, \nonumber \end{equation} }
where
$l= \frac{1}{k} \ln \beta $.
For $u \in \mathbb R$, $\beta <0$ one obtains 
{\tiny \begin{equation}
a(r',t')= \sqrt{\frac{u^2+ \sin ^2(l_1)(q^2+2 q u+ p^2) + \cos^2(l_1)+ ( 2 u+ 2 q-2 u p) \sin (l_1) \cos (l_1)}{u^2+ u q +p}} \nonumber \end{equation}
\begin{equation}
b(r',t')= \frac{ \sin(l_1) \cos(l_1)(1-2 u q-p^2-q^2)+ \sin^2(l_1) (2 q+ 2 u-2 u p)+(u p-q-u)}{\sqrt{(u^2+ \sin ^2(l_1)(q^2+2 q u+ p^2) + \cos^2(l_1)+ ( 2 q+ 2 u- 2 u p) \sin (l_1) \cos (l_1))(u^2+ u q +p)}} \nonumber \end{equation} }
with
{\tiny \begin{equation}
r' \cos(t')= \beta \left((p-1) \sin(l_1) \cos(l_1)-q \sin^2(l_1)-u \right) \cdot  \nonumber \end{equation}
\begin{equation} \sqrt{\frac{u^2+ uq +p}{u^2+ \sin ^2(l_1)(q^2+2 q u+ p^2) + \cos^2(l_1)+ ( 2 u+ 2 q-2 u p) \sin (l_1) \cos (l_1)}}, \nonumber \end{equation}
\begin{equation}
r' \sin(t')= -\beta \left(\cos^2(l_1)+ q \sin(l_1) \cos(l_1) + p \sin^2(l_1) \right) \cdot \nonumber \end{equation}
\begin{equation} \sqrt{\frac{u^2+ u q +p}{u^2+ \sin ^2(l_1)(q^2+2 q u+ p^2) + \cos^2(l_1)+ ( 2 u+ 2 q-2 u p) \sin (l_1) \cos (l_1)}}, \nonumber \end{equation}}
where
$l_1= \frac{1}{k} \ln (-\beta )$.
\newline
\noindent
The quasifields $Q_{k,w,z,p,q}$ coordinatize a family of planes
${\cal A}_{k,w,z,p,q}$ such that for the parameters $k,w,z,p,q$ one has
$k \neq 0$, $(4+k^2)(z^2+(w+1)^2) \le k^2(1-w)^2$, $(4+k^2)(q^2+(p-1)^2) \le k^2(p+1)^2$, $(w,z,p,q) \neq (-1,0,1,0)$.
\newline
\noindent
In all cases $Q^{\ast }$ is not decomposable and quasi-simple. The set of the left translations of $Q^{\ast }$ belonging to the kernel of $Q$ is
${\mathcal K}=\left\{ \left( \begin{array}{cc}
r & 0 \\
0 & r \end{array} \right), 0 \neq r \in \mathbb R \right\}$.
\end{Prop}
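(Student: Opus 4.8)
The plan is to follow the template of the proofs of Propositions \ref{prop1} and \ref{prop2}. First I would extract from Betten's classification the three families of $4$-dimensional translation planes whose $7$-dimensional collineation group realizes the stated configuration: the translation complement $C$ has a $1$-dimensional orbit on $W \setminus \{0\}$, fixes $S$ as the only line through the origin, and acts on $S$ with a kernel of positive dimension. According to Betten's list these three possibilities are distinguished by the structure of the induced action of $C$ on $S$, and they produce the three normalized spread sets underlying cases (a), (b), (c); case (c), in which the action is a logarithmic spiral governed by the parameter $k$, is the richest and supplies the $\ln\beta$-dependence through $l = k^{-1}\ln\beta$.

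For each such spread I would write the component through a point with first column $(\alpha,\beta)^t$ (respectively $(u,v)^t$) as the graph of a linear map and decompose its matrix as a rotation times an upper-triangular factor, exactly as in (\ref{quasifieldmultiplication}). Comparing the first column with $(r\cos t\, a(r,t), -r\sin t\, a(r,t))^t$ and the $(1,2)$ and $(2,2)$ entries then reads off the functions $a(r,t)$, $b(r,t)$ together with the expressions for $r\cos t$ and $r\sin t$ stated in the proposition; the admissible ranges of $s,w,z,p,q$ (respectively $w,z,p,q$ or $k,w,z,p,q$) are precisely the conditions under which $(M_1)$ and $(M_2)$ hold, i.e. every difference of two components is invertible and every map $\phi_x$ is surjective.

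To identify $\mathcal K$ I would evaluate at the points corresponding to $t = k\pi$, $k \in \{0,1\}$. In cases (a) and (b) this is the substitution $\beta = 0$, which (according to the sign of $\alpha$) produces the two directions $t=0$ and $t=\pi$ and immediately gives $a(r,k\pi)=1$, $b(r,k\pi)=0$; in case (c) one locates the directions $t=k\pi$ directly, whereupon $r=|\beta|$ and again $a(r,k\pi)=1$, $b(r,k\pi)=0$. By Remark \ref{diagonaluj} this forces $\mathcal K=\{\,rI : r \neq 0\,\}$ in all three cases. For non-decomposability I would invoke the criterion of Proposition \ref{productloop}: decomposability would require $a(r,t+k\pi)=a(1,t)\,a(r,k\pi)=a(1,t)$ for all $r>0$, so $a(r,t)$ would have to be independent of $r$ at each fixed direction $t$. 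Fixing $t=\pi/4$ and letting the remaining parameter vary, exactly as in Proposition \ref{prop2}, shows that $a(r,\pi/4)$ still depends on that parameter, hence on $r$, which contradicts the required constancy. Since $Q^{\ast}$ is therefore not decomposable, Theorem \ref{quasisimple} yields that $Q^{\ast}$ is quasi-simple.

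The main obstacle is the column-by-column verification that each Betten spread matches the decomposition (\ref{section}) with precisely the stated functions, together with the bookkeeping of the two sign regimes $\beta>0$ and $\beta<0$. This is most delicate in case (c), where $a$ and $b$ depend on $\beta$ through $l=k^{-1}\ln\beta$ and the matrix entries intertwine $\sin l$, $\cos l$ with $w,z,p,q$; confirming that the listed inequalities are exactly those guaranteeing $(M_1)$ and $(M_2)$, and that the directions $t=k\pi$ really yield $r=|\beta|$ with $a=1$, $b=0$, is the part that requires the most care. Once the decomposition is established, the assertions about $\mathcal K$, non-decomposability, and quasi-simplicity follow mechanically.
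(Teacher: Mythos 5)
Your proposal follows essentially the same route as the paper: it extracts the three normalized spreads from Betten's classification (fixing two, one, or no $1$-dimensional subspace of $S$), rewrites each component in the form (\ref{section}) to read off $a(r,t)$ and $b(r,t)$, uses $a(r,k\pi)=1$, $b(r,k\pi)=0$ together with Remark \ref{diagonaluj} to identify $\mathcal K$, and refutes decomposability via the criterion $a(r,t+k\pi)=a(1,t)a(r,k\pi)$ of Proposition \ref{productloop} by exhibiting $r$-dependence of $a$ at a fixed direction, concluding quasi-simplicity from Theorem \ref{quasisimple}. The only cosmetic difference is that you propose to re-derive the parameter inequalities from $(M_1)$ and $(M_2)$, whereas the paper simply quotes them from Betten's S\"atze; the argument is otherwise the same.
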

\begin{proof}
If the translation complement $C$ fixes two $1$-dimensional subspaces of $S$, then we have a family of
translation planes ${\mathcal A}_{s,w,z,p,q}$ such that the normalized spreads belonging to these planes are given as follows:
{\tiny \begin{equation} \label{spread5} \{S \} \cup \left\{ \left( \begin{array}{cc}
\alpha  & w \beta^{\frac{1-s}{1+s}} \\
\beta & z \beta^{\frac{1}{1+s}}+ \alpha \end{array} \right), \alpha \in \mathbb R, \beta \ge 0 \right\} \cup
\left\{ \left( \begin{array}{cc}
\alpha  & p (-\beta)^{\frac{1-s}{1+s}} \\
\beta & q (-\beta)^{\frac{1}{1+s}}+ \alpha \end{array} \right), \alpha \in \mathbb R, \beta < 0 \right\},  \end{equation} }
(cf. \cite{betten4},  Satz 1, pp. 411-412).

\noindent
If the translation complement $C$ fixes only one $1$-dimensional subspace of $S$, then there is a family of translation planes
${\mathcal A}_{w,z,p,q}$ such that the corresponding normalized spreads have the form:
{\tiny \begin{equation} \label{spread6} \{S \} \cup \left\{ \left( \begin{array}{cc}
\alpha  & w \beta -z \beta \ln \beta - \beta (\ln \beta)^2  \\
\beta &  \alpha + z \beta + 2 \beta \ln \beta \end{array} \right), \alpha \in \mathbb R, \beta \ge 0 \right\} \cup
\left\{ \left( \begin{array}{cc}
\alpha  & -p \beta - \beta (\ln (- \beta ))^2 + q \beta \ln (- \beta)  \\
\beta & q (-\beta) + \alpha + 2 \beta \ln(- \beta ) \end{array} \right), \alpha \in \mathbb R, \beta < 0 \right\}  
\end{equation} }
(cf. Satz 2, \cite{betten4}, pp. 418-419).

\noindent
If the translation complement $C$ acts transitively on the $1$-dimensional subspaces of $S$, then we have a family of
translation planes ${\mathcal A}_{k,w,z,p,q}$ such that the normalized spreads belonging to these planes have the form
{\tiny \begin{equation} \label{spreadSatz3}
\{S \} \cup \left\{ \beta \left( \begin{array}{cc}
1 &  0 \\
0 & 1 \end{array} \right),  \beta  \in \mathbb R \right\} \cup \nonumber \end{equation}
\begin{equation} \left\{ \beta \left( \begin{array}{cc}
u-(w+1) \sin(l) \cos(l)+ z \sin^2(l) & w \cos^2(l)-z \sin (l) \cos(l) - \sin ^2(l)  \\
\cos^2(l)-z \sin (l) \cos(l) - w \sin ^2(l) & z \cos^2(l)+(w+1) \sin (l) \cos(l) +u \end{array} \right), u \in \mathbb R, \beta >0 \right\} \cup \nonumber \end{equation}
\begin{equation} \left\{ \beta  \left( \begin{array}{cc}
(p-1) \sin(l_1) \cos(l_1)-q \sin^2(l_1)-u & q \sin (l_1) \cos(l_1) -p \cos^2(l_1)- \sin ^2(l_1) \\
\cos^2(l_1)+ q \sin (l_1) \cos(l_1) +p \sin ^2(l_1) & (1-p) \sin (l_1) \cos(l_1)-q \cos^2(l_1)-u \end{array} \right), 
u \in \mathbb R, \beta < 0 \right\},  \end{equation} }
\noindent
where $l= \frac{1}{k} \ln \beta $, $l_1= \frac{1}{k} \ln (-\beta)$  (cf. \cite{ortleb}, Proposition 4.1, p. 6, and 
\cite{betten4}, Satz 3, pp. 422-423).
\noindent
The spreads (\ref{spread5}), respectively (\ref{spread6}), respectively (\ref{spreadSatz3}) coincide with the image of the section $\sigma $ in (\ref{section}) such that the well defined functions $a(r,t)$ and $b(r,t)$ are given in assertion a), respectively b), respectively c). Since in all three cases we have $a(r,k \pi)=1$, $b(r,k \pi)=0$, $r >0$, 
$k \in \{0, 1\}$, Remark 
\ref{diagonaluj} shows that ${\mathcal K}$ has the form as in the assertion.
In case a), respectively b) for $\beta >0$ one gets
$a(r,\frac{\pi}{4}+ \pi)= \frac{\sqrt{2} \beta }{\sqrt{\beta ^2- z \beta ^{\frac{2+s}{1+s}} 
-w \beta ^{\frac{2}{1+s}}}}$, respectively
$a(r, \frac{\pi}{4}+ \pi)= \sqrt{\frac{2}{1-z-w-2 \ln \beta + z \ln \beta +(\ln \beta )^2}}$.
In case c) for $u=0$, $\beta >0$ we get that $a(1,t_j)$ is constant. These relations give a contradiction to the condition 
$a(r,t+ k \pi)=a(1,t)$, $r>0$, $t \in [0, 2 \pi)$, $k \in \{ 0, 1\}$ 
of Proposition \ref{productloop}. Hence in all cases $Q^{\ast }$ is not decomposable  and quasi-simple (cf. Theorem 
\ref{quasisimple}).
\end{proof}

\begin{Prop} \label{prop4}
Let $Q$ be a $2$-dimensional quasifield coordinatizing a $4$-dimensional locally compact translation plane ${\cal A}$
such that the translation complement $C$ of the $7$-dimensional  collineation group $\Gamma $ of ${\cal A}$ has an orbit of  dimension 
$1$ on
$W \setminus \{ 0 \}$, $C$ leaves only $S$ in the set of lines through the origin  fixed and the kernel of its action on 
$S$ is  zero-dimensional. Then the set $\Lambda _{Q^{\ast }}$ of all left translations of the
multiplicative loop $Q^{\ast }$ is given by the range of the section (\ref{section}) defined as follows:
For $\alpha \ge - \frac{\beta^2}{2}$ one has
{\tiny \begin{equation} \label{artSatz5elso}
a(r,t)= \sqrt{\frac{\alpha ^2+ \beta^2}{\frac{\alpha \beta^2}{2q}+\frac{\beta ^4}{3q}+\left( \alpha +\frac{\beta ^2}{2} \right)
\left(\alpha + \frac{q-1}{q} \beta ^2 \right)- \frac{p \beta }{q} \left( \alpha +\frac{\beta ^2}{2} \right)^{\frac{3}{2}}}}, \nonumber \end{equation}
\begin{equation} b(r,t)= \frac{\frac{p}{q} \alpha \left( \alpha + \frac{\beta ^2}{2} \right)^{\frac{3}{2}} - \frac{p}{q} \left(\alpha ^2 + \beta^2 \right) + \frac{1-q}{q} \beta \alpha ^2 + \frac{\alpha \beta ^3}{6 q}- \frac{\beta ^3 \alpha }{2}+ \frac{\beta ^3}{2q}+
\frac{\beta ^3}{2} + \alpha \beta }{\sqrt{\alpha ^2+ \beta^2} \sqrt{\frac{\alpha \beta^2}{2q}+\frac{\beta ^4}{3q}+\left( \alpha +\frac{\beta ^2}{2} \right) \left(\alpha + \frac{(q-1)}{q} \beta ^2 \right)- \frac{p \beta }{q} \left( \alpha +\frac{\beta ^2}{2} \right)^{\frac{3}{2}}}},  \nonumber \end{equation} }
with
{\tiny \begin{equation}
r \cos (t)= \alpha \sqrt{\frac{\frac{\alpha \beta^2}{2q}+\frac{\beta ^4}{3q}+\left( \alpha +\frac{\beta ^2}{2} \right) \left(\alpha + \frac{q-1}{q} \beta ^2 \right)- \frac{p \beta }{q} \left( \alpha +\frac{\beta ^2}{2} \right)^{\frac{3}{2}}}{\alpha ^2+ \beta ^2}}, \nonumber \end{equation}
\begin{equation}
r \sin (t)= -\beta \sqrt{\frac{\frac{\alpha \beta^2}{2q}+\frac{\beta ^4}{3q}+\left( \alpha +\frac{\beta ^2}{2} \right) \left(\alpha + \frac{q-1}{q} \beta ^2 \right)- \frac{p \beta }{q} \left( \alpha +\frac{\beta ^2}{2} \right)^{\frac{3}{2}}}{\alpha ^2+ \beta ^2}}. \nonumber \end{equation} }
\noindent 
For $\alpha < - \frac{\beta^2}{2} $  we get
{\tiny \begin{equation} \label{artSatz5}
a(r,t)= \sqrt{\frac{\alpha ^2+ \beta^2}{\frac{\alpha \beta^2}{2q} +\frac{ \beta ^4}{3q}- \left( \alpha +\frac{\beta ^2}{2} \right) \left( \frac{ \alpha z}{q}+ \frac{(z+1) \beta ^2}{q} \right)- \frac{w \beta }{q} \left( -\alpha -\frac{\beta ^2}{2} \right)^{\frac{3}{2}}}},  \nonumber \end{equation}
\begin{equation} b(r,t)= \frac{ \frac{w}{q} \alpha \left( -\alpha - \frac{\beta ^2}{2} \right)^{\frac{3}{2}}+\frac{p }{q} \left(- \alpha ^2-\beta ^2 \right)+
\left(\frac{z+1}{q} \alpha \beta - \frac{z \beta }{q} \right) \left( \alpha +\frac{\beta ^2}{2} \right)-\frac{\alpha \beta ^3}{3q}
+\frac{\beta ^3}{2q}}{\sqrt{\alpha ^2+ \beta^2}{\sqrt{\frac{\alpha \beta^2}{2q} +\frac{ \beta ^4}{3q}- \left( \alpha +\frac{\beta ^2}{2} \right) \left( \frac{ \alpha z}{q}+ \frac{(z+1) \beta ^2}{q} \right)- \frac{w \beta }{q} \left( -\alpha -\frac{\beta ^2}{2} \right)^{\frac{3}{2}}}}}, \nonumber \end{equation} }
with
{\tiny \begin{equation}
r \cos(t)= \alpha \sqrt{\frac{\frac{\alpha \beta^2}{2q} +\frac{ \beta ^4}{3q}- \left( \alpha +\frac{\beta ^2}{2} \right) \left( \frac{ \alpha z}{q}+ \frac{(z+1) \beta ^2}{q} \right)- \frac{w \beta }{q} \left( -\alpha -\frac{\beta ^2}{2} \right)^{\frac{3}{2}}}{\alpha ^2+ \beta^2}}, \nonumber \end{equation}
\begin{equation}
r \sin(t)= -\beta \sqrt{\frac{\frac{\alpha \beta^2}{2q} +\frac{ \beta ^4}{3q}- \left( \alpha +\frac{\beta ^2}{2} \right) \left( \frac{ \alpha z}{q}+ \frac{(z+1) \beta ^2}{q} \right)- \frac{w \beta }{q} \left( -\alpha -\frac{\beta ^2}{2} \right)^{\frac{3}{2}}}{\alpha ^2+ \beta^2}}. \nonumber \end{equation} }
\noindent
The quasifields $Q_{w,z,p,q}$ coordinatize a family of planes
${\cal A}_{w,z,p,q}$ such that the parameters $w,z,p,q$ satisfy $(3w)^2 \le -16 z(z+1)$, $(3p)^2 \le 16 q(q-1)$, $q >0$, $z<0$ and
$(w,z,p,q) \neq (0,- \frac{1}{3},0,3)$.
\newline
\noindent
The multiplicative loops $Q^{\ast }_{w,z,p,q}$ of the quasifields $Q_{w,z,p,q}$ are not decomposable and quasi-simple.
The left translations of $Q^{\ast }_{w,z,p,q}$ corresponding to the kernel of $Q_{w,z,p,q}$ have the form
$\left( \begin{array}{cc}
r & 0 \\
0 & r \end{array} \right), 0 \neq r \in \mathbb R$, if and only if $w=p=0$, $q=-z=1$.
\end{Prop}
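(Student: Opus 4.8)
The plan is to follow the blueprint of the proofs of Propositions \ref{prop1}, \ref{prop2} and \ref{prop3}. First I would invoke Betten's classification for the one remaining configuration, in which the translation complement $C$ of the $7$-dimensional group $\Gamma$ fixes only $S$ among the lines through the origin, has a $1$-dimensional orbit on $W\setminus\{0\}$, and acts on $S$ with zero-dimensional kernel; this is the case treated in \cite{betten5}, producing the family of normalized spreads with parameters $w,z,p,q$ subject to $(3w)^2\le -16z(z+1)$, $(3p)^2\le 16q(q-1)$, $q>0$, $z<0$ and $(w,z,p,q)\neq(0,-\tfrac13,0,3)$. Each matrix of such a spread is then factored as a rotation times an upper-triangular matrix, i.e.\ matched against the image of the section $\sigma$ in (\ref{section}). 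Solving for $r\cos t$, $r\sin t$, $a$ and $b$ in terms of the spread parameters $\alpha,\beta$, separately in the two regimes $\alpha\ge-\tfrac{\beta^2}{2}$ and $\alpha<-\tfrac{\beta^2}{2}$, is what yields the stated closed forms for $a(r,t)$ and $b(r,t)$.

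Next I would settle quasi-simplicity. By Theorem \ref{quasisimple} it suffices to show that $Q^{\ast}$ is not decomposable, and by Proposition \ref{productloop} decomposability would force $a(r,t+k\pi)=a(1,t)\,a(r,k\pi)$ for all $r>0$, $t\in[0,2\pi)$, $k\in\{0,1\}$. Substituting $\beta=0$ into the denominator under the square root defining $a(r,t)^2$ shows that $a(r,0)=1$ identically, so the case $k=0$ of the decomposability relation collapses to $a(r,t)=a(1,t)$, i.e.\ $a(r,t)$ would have to be independent of the scale $r$ along every fixed direction. I would then observe that this fails: the numerator $\alpha^2+\beta^2$ of $a(r,t)^2$ is homogeneous of degree $2$ in $(\alpha,\beta)$, whereas the denominator is a genuinely inhomogeneous expression (mixing degrees $2,3,4$ and the fractional power $(\,\cdot\,)^{3/2}$), so for a fixed ratio $\beta/\alpha$ (hence fixed $t$) the value of $a$ varies as $(\alpha,\beta)$ is rescaled. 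This contradiction proves $Q^{\ast}$ is not decomposable, hence quasi-simple.

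Finally, for the kernel I would apply Remark \ref{diagonaluj}: the set $\mathcal{K}$ equals $\{\mathrm{diag}(r,r):0\neq r\in\mathbb{R}\}$ precisely when $a(r,k\pi)=1$ and $b(r,k\pi)=0$ for $k\in\{0,1\}$. The angles $t=0,\pi$ correspond to $\beta=0$, with $\alpha>0$ on the first branch and $\alpha<0$ on the second. There I expect $a(r,0)=1$ to hold identically, while $a(r,\pi)=\sqrt{-q/z}$, so the requirement $a(r,\pi)=1$ gives $q=-z$; and $b(r,0)=0$, $b(r,\pi)=0$ for all $r$ force the coefficients of the independent powers of $|\alpha|$ to vanish, giving $w=p=0$. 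The decisive point is then to feed $w=p=0$, $q=-z$ back into the admissibility constraints: with $w=p=0$ they read $z(z+1)\le 0$ and $q(q-1)\ge 0$, i.e.\ $-1\le z\le 0$ and (using $q>0$) $q\ge 1$, which together with $q=-z$ pin down $z=-1$ and $q=1$. This yields exactly the criterion $w=p=0$, $q=-z=1$.

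The main obstacle will be the first step, the explicit verification that Betten's spread matrices coincide with the image of the section (\ref{section}): one must carry out the rotation/upper-triangular factorization of each $2\times 2$ spread matrix and invert the resulting relations to express $a,b,r\cos t,r\sin t$ through $\alpha,\beta$ and the four parameters in both regimes, keeping track of the fractional powers and the sign conventions across the boundary $\alpha=-\tfrac{\beta^2}{2}$. By contrast, the non-decomposability argument becomes routine once the homogeneity defect of the denominator is noted, and the kernel computation is short given the closed forms in hand.
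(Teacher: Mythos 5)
Your proposal is correct and follows essentially the same route as the paper: match Betten's normalized spread (Satz~5 of the paper on planes with exactly one fixed direction, i.e.\ \cite{betten4}, not \cite{betten5}) against the section (\ref{section}), rule out decomposability by exhibiting $r$-dependence of $a(r,t)$ along a fixed direction in violation of Proposition \ref{productloop}, and read off the kernel condition from Remark \ref{diagonaluj}. Your homogeneity-defect argument for non-decomposability and your back-substitution of $w=p=0$, $q=-z$ into the admissibility constraints to force $q=-z=1$ are somewhat more explicit than the paper's corresponding one-line evaluations (the paper instead computes $a(r,\frac{\pi}{4}+\pi)$ for $\beta>2$), but they are the same argument in substance.
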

\begin{proof} By Satz 5 in \cite{betten4}, the planes ${\cal A}_{w,z,p,q}$ are determined by the normalized spreads which have the form
{\tiny \begin{equation}
\{ S \} \cup \left\{ \left( \begin{array}{cc}
\alpha  & \frac{-p}{q} \alpha + \frac{p}{q} \left( \alpha + \frac{\beta ^2}{2} \right)^{\frac{3}{2}}+ \frac{(1-q)}{q} \beta 
\left( \alpha + \frac{\beta ^2}{2} \right)- \frac{\beta^3}{3q}  \\
\beta & \frac{-p}{q} \beta + \frac{\beta^2}{2q}+ \left( \alpha + \frac{\beta ^2}{2} \right) \end{array} \right), \beta \in \mathbb R, \alpha \ge - \frac{\beta^2}{2} \right\} \cup \nonumber \end{equation}
\begin{equation}
\left\{ \left( \begin{array}{cc}
\alpha  & \frac{-p}{q} \alpha + \frac{w}{q} \left( -\alpha - \frac{\beta ^2}{2} \right)^{\frac{3}{2}}+ \frac{(z+1)}{q} \beta 
\left( \alpha + \frac{\beta ^2}{2} \right)- \frac{ \beta^3}{3q}  \\
\beta & \frac{-p}{q} \beta + \frac{\beta ^2}{2q} -\frac{z}{q} \left( \alpha + \frac{\beta ^2}{2} \right) \end{array} \right), 
\beta \in \mathbb R, \alpha < - \frac{\beta^2}{2} \right\}. \nonumber \end{equation} }
These spreads coincide with the image of the section $\sigma $ in (\ref{section})  such that the well defined  functions 
$a(r,t)$ and 
$b(r,t)$ are given in the assertion. One gets that $a(r,0)=1$ and $a(r, \pi)= \sqrt{\frac{-q}{z}}$ for all $r>0$. 

For $\beta > 2$ we obtain 
{\tiny \begin{equation} \label{equa} a\left(r, \frac{\pi}{4}+ \pi \right)=\frac{\sqrt{2} \beta }{\sqrt{\frac{\beta^4}{3q}- 
\frac{\beta ^3}{2q}- \frac{w}{q} \beta \left(\beta -\frac{\beta ^2}{2} \right)^{\frac{3}{2}}+ \left(\beta -\frac{\beta ^2}{2} \right) \left(\frac{z+1}{q} \beta ^2 - \frac{\beta z}{q} \right)}}.  \nonumber \end{equation} }
The loop $Q^{\ast }_{w,z,p,q}$ is not decomposable since we have a contradiction to the condition 
$a(r,\frac{\pi }{4}+ k \pi)=a(1,\frac{\pi }{4}) a(r, k \pi)$, $r>0$, $k\in \{0, 1\}$  (cf. Proposition \ref{productloop}).
Hence $Q^{\ast }_{w,z,p,q}$ is quasi-simple (cf. Theorem \ref{quasisimple}). As $a(r,k \pi)=1$ and $b(r,k \pi)=0$, $r>0$, $k \in \{0, 1\}$ holds precisely if 
$w=p=0$, $q=-z=1$ the last assertion follows.  \end{proof}

\begin{Prop} \label{7dimSO2R}
Let $Q$ be a $2$-dimensional quasifield coordinatizing a $4$-dimensional locally compact translation plane ${\cal A}$
such that the translation complement $C$ of the $7$-dimensional  collineation group ${\cal A}$ fixes two distinct lines $\{ S, W \}$ through the origin and leaves on $S$ one or two $1$-dimensional subspaces invariant. Then the multiplicative loop $Q^{\ast }$ can be given by one of the following sets $\Lambda _{Q^{\ast }}$ of the left translations of $Q^{\ast }$ having the form (\ref{lambda}):
\newline
\noindent
a) \begin{equation} \label{a(r,0)}
a(r,0)= r^{\frac{1-w}{1+w}}, \ \ b(r,0)= c \left(r^{\frac{w-1}{w+1}}-r^{\frac{1-w}{1+w}} \right), \nonumber \end{equation}
with $r=s^{\frac{w+1}{2}}$, $s>0$, $t=- \varphi $, where $s$ and $\varphi $ are variables of the spreads (\ref{spreadbetten5satz1}).
The quasifields $Q_{w,c}$ coordinatize a family of planes
${\cal A}_{w,c}$ such that for the parameters $w \neq 1, c$ one has $0 < w$ and $(w-1)^2 c^2 \le 4w$.

\noindent
b) \begin{equation} \label{b(r,0)}
a(r,0)=1, \ \ b(r,0)= \frac{\ln r}{d}, \nonumber \end{equation}
with $r=e^s$, $t=- \varphi $, where $s$ and $\varphi $ are variables of the spreads (\ref{spreadbetten5satz2}). The quasifields $Q_{d}$ coordinatize a one-parameter family of planes
${\cal A}_{d}$ such that $4d^2 \ge 1$.

\noindent
In both cases $Q^{\ast }$ is decomposable and contains the group ${\mathrm{SO}}_2(\mathbb R)$.
\end{Prop}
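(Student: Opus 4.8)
The plan is to mirror the strategy of Propositions \ref{prop1}--\ref{prop4}: invoke Betten's classification for the present geometric configuration, record the two resulting spread families, rewrite each spread as the image of a section of the form (\ref{section}), and read off the functions $a$ and $b$. The two subcases --- the translation complement $C$ leaving invariant exactly one, respectively exactly two, $1$-dimensional subspaces of $S$ --- produce the two families, and the inequalities on the parameters ($0<w$ together with $(w-1)^2 c^2\le 4w$ in case a), and $4d^2\ge 1$ in case b)) are precisely Betten's conditions guaranteeing that the corresponding matrix families satisfy $(M_1)$ and $(M_2)$ and hence are spreads. First I would therefore cite \cite{betten5} (Satz~1 and Satz~2) and display the two normalized spreads (\ref{spreadbetten5satz1}) and (\ref{spreadbetten5satz2}), each parameterized by a radial variable $s$ and an angular variable $\varphi$.

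The decisive step is the explicit polar decomposition. For each spread matrix I would factor it as a rotation times a unimodular upper-triangular matrix, carrying out the substitutions $r=s^{(w+1)/2}$ (respectively $r=e^{s}$) together with $t=-\varphi$. The feature distinguishing this configuration from those of Propositions \ref{prop1}--\ref{prop4} is that, after this factorization, the upper-triangular factor depends on $r$ alone: the entire angular dependence is absorbed into the rotation $\begin{pmatrix}\cos t&\sin t\\-\sin t&\cos t\end{pmatrix}$. Verifying this $t$-independence, and thereby obtaining $a(r,t)=a(r,0)=r^{(1-w)/(1+w)}$ and $b(r,t)=b(r,0)=c\bigl(r^{(w-1)/(w+1)}-r^{(1-w)/(1+w)}\bigr)$ in case a), and $a(r,0)=1$, $b(r,0)=(\ln r)/d$ in case b), is the computational heart of the argument.

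Once $a$ and $b$ are seen to be independent of $t$, the set $\Lambda_{Q^{\ast}}$ is automatically of the form (\ref{lambda}). Applying Proposition \ref{compactuj} in the direction that reconstructs $Q^{\ast}$ from the form (\ref{lambda}) then yields at once that $\Lambda_{Q^{\ast}}$ contains $\mathrm{SO}_2(\mathbb R)$ and that $Q^{\ast}$ is decomposable, which is the assertion. To invoke Proposition \ref{compactuj} I must also check its hypothesis that $u\mapsto u\,a(u,0)$ is strictly monotone: in case a) this function equals $u^{2/(1+w)}$, strictly increasing for $w>0$, and in case b) it equals $u$, trivially strictly increasing. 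The normalizations $a(1,0)=1$ and $b(1,0)=0$ hold by construction --- they are built into the values of Betten's parameters corresponding to $r=1$ --- so the rotations themselves occur as left translations at $u=1$, which explains the containment of $\mathrm{SO}_2(\mathbb R)$ directly.

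The main obstacle is the bookkeeping in the polar decomposition: one must confirm that $r=s^{(w+1)/2}$ and $r=e^{s}$ are exactly the substitutions linearizing the radial action of $C$, so that Betten's angular parameter $\varphi$ matches the rotation angle $-t$ with no residual contribution to the upper-triangular factor. This is where the sign conventions and the precise exponents in Betten's matrices must be tracked with care; everything after the factorization reduces to the routine monotonicity check above and a direct appeal to Proposition \ref{compactuj}.
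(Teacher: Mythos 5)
Your proposal is correct and follows essentially the same route as the paper: cite Betten's Satz 1 and Satz 2 for the two spread families, observe that each spread matrix already factors as a rotation times an upper-triangular matrix depending only on the radial parameter, read off $a(r,0)$ and $b(r,0)$ via the substitutions $r=s^{(w+1)/2}$ (resp.\ $r=e^{s}$) and $t=-\varphi$, and conclude via the form (\ref{lambda}) and Proposition \ref{compactuj}. Your explicit verification that $u\,a(u,0)=u^{2/(1+w)}$ (resp.\ $u$) is strictly monotone is a detail the paper's proof leaves implicit, but it is the same argument.
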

\begin{proof}
If the group $C$ fixes two $1$-dimensional subspaces of $S$, respectively only one $1$-dimensional subspace of $S$,
then one obtains a family of translation planes corresponding to the normalized spreads
\begin{equation} \label{spreadbetten5satz1} \{ S, W \} \cup \left\{\left( \begin{array}{cc}
\cos \varphi  &  - \sin \varphi \\
\sin \varphi &  \cos \varphi \end{array} \right) \left( \begin{array}{cc}
s & c(s^w-s) \\
0 & s^w \end{array} \right), \ s, \varphi \in \mathbb R, s>0 \right\} \end{equation}
(cf.  \cite{betten5},  Satz \ 1 \ and \ \cite{betten7}, \ p. \ 15), respectively
\begin{equation} \label{spreadbetten5satz2} \{ S, W \} \cup \left\{ \left( \begin{array}{cc}
\cos \varphi  &  - \sin \varphi \\
\sin \varphi &  \cos \varphi \end{array} \right) \left( \begin{array}{cc}
e^s & e^s \frac{s}{d} \\
0 & e^s \end{array} \right), s, \varphi \in \mathbb R \right\},  \end{equation}
(cf. \cite{betten5}, Satz  2  and  \cite{betten7}, p. 15).
In both cases these spreads coincide with the set $\Lambda ={\mathrm{SO}}_2(\mathbb R) {\mathcal K}$ given in (\ref{lambda}) such that the set
${\mathcal K}$ corresponding to the kernel $K_r$ of $Q$ is determined by the functions $a(r,0)$, $b(r,0)$ as in assertion a), respectively  b). \end{proof}

\begin{Rem}
In \cite{betten1} D. Betten constructed $4$-dimensional locally compact non-desarguesian planes ${\cal A}_{f}$ corresponding to continuous, non-linear, strictly monotone functions $f$ defined for $0 \le u \in \mathbb R$ with $f(0)=0$ and
$\lim \limits _{u \to \infty } f(u)= \infty $. The planes ${\cal A}_{f}$ are determined by the normalized spreads
\begin{equation} \left\{ \left( \begin{array}{cc}
u \cos \varphi  & -\frac{f(u) \sin \varphi }{f(1)} \\
u \sin \varphi & \frac{f(u) \cos \varphi}{f(1)} \end{array} \right), u > 0, \ \varphi \in [0, 2 \pi) \ \right\}. \nonumber \end{equation}
These spreads coincide with the set $\Lambda ={\mathrm{SO}}_2(\mathbb R) {\mathcal K}$ given in (\ref{lambda}) such that the set
${\mathcal K}$ corresponding to the kernel $K_r$ of the quasifield $Q_f$ coordinatizing ${\cal A}_{f}$ is determined by the functions
$a(r,0)=\sqrt{\frac{u f(1)}{f(u)}}$, $b(r,0)=0$ with
$r= \sqrt{\frac{u f(u)}{f(1)}}$, $t= - \varphi $, $u \neq 0$.
For $f(u)=f(1) u^w$ these planes are planes in Proposition \ref{7dimSO2R} a) with $c=0$ and
$a(r,0)=r^{\frac{1-w}{1+w}}$. Otherwise the full collineation group of the planes ${\cal A}_{f}$ has dimension $6$.
\end{Rem}

\begin{Prop} \label{prop5}
Let $Q$ be a $2$-dimensional quasifield coordinatizing a $4$-dimensional locally compact translation plane ${\cal A}$
such that the translation complement $C$ of the $7$-dimensional  collineation group of ${\cal A}$ fixes two distinct lines $\{ S, W \}$ through the origin and acts transitively on the spaces $P_S$ and $P_W$ of all $1$-dimensional subspaces of $S$, respectively $W$.  Then the multiplicative loop $Q^{\ast }$ of $Q$ can be given by one of the following sets $\Lambda _{Q^{\ast }}$ of the left translations of
$Q^{\ast }$:
\newline
\noindent
a) $\Lambda _{Q^{\ast }}$ is the range of the section (\ref{section}) with
{\tiny \begin{equation}
a(r,u)=\sqrt{\frac{d D}{d e^{2(qt-ps)}+ d e^{2 q \pi }+ e^{qt-ps+q \pi} \left( 2 d \cos s \cos t+(c^2+ 1+d^2) \sin s \sin t \right)}}, \nonumber  \end{equation}
\begin{equation}
b(r,u)= \frac{e^{2(qt-ps)} \left[(-c^2-1+d^2) \cos t \sin t- c(c^2+1+d^2) \sin^2 t \right]}{\sqrt{d D \left[d(e^{2(qt-ps)}+ e^{2 q \pi})+
e^{qt-ps+q \pi}(2 d \cos s \cos t+(d^2+c^2+1) \sin s \sin t) \right]}} + \nonumber \end{equation}
\begin{equation}
+ \frac{e^{qt-ps+q \pi} \left( \cos s \cos t + d \sin s \sin t+ c \cos s \sin t \right)}{\sqrt{d D \left[d(e^{2(qt-ps)}+ e^{2 q \pi})+ e^{qt-ps+q \pi}(2 d \cos s \cos t+(d^2+c^2+1) \sin s \sin t) \right]}}, \nonumber \end{equation} }
such that
{\tiny \begin{equation} r \cos u= \frac{e^{qt-ps} (\cos{s} \cos{t}+c \sin{t} \cos{s} +d \sin{t} \sin{s})+ e^{q \pi}}{1+e^{q \pi}} a^{-1}(r,u), \nonumber \end{equation}  }
{\tiny \begin{equation} r \sin u= -\frac{e^{qt-ps} (d \cos{s} \sin{t} -\sin{s} \cos{t}- c \sin {s} \sin{t})}{1+e^{q \pi}} a^{-1}(r,u), \nonumber \end{equation}
\begin{equation}  D= e^{2(qt-ps)} \left( (\cos t+c \sin t)^2+d^2 \sin^2 t \right)+ e^{2 q \pi }+2 e^{qt-ps+q \pi}(\cos s \cos t+c \cos s \sin t+d \sin s \sin t). \nonumber \end{equation} }
\newline
\noindent
The quasifields  $Q_{p,q,c,d}$ coordinatize a family of planes $\mathcal{A}_{p,q,c,d}$ such that the parameters
$p,q,c,d$ satisfy the  conditions
{\tiny \begin{equation}
\begin{array}{lcl}
p=q>0 & \hbox{and} & -1 \le d <0, \\
q > 0, p=\frac{k-1}{k+1} q, k=1,2,3, \cdots & \hbox{and} &  d >0,  \end{array} \nonumber \end{equation}
\begin{equation}
-(q+p)^2 A+ (q-p)^2 B -4 A B \ge 0, \ \hbox{where} \ A=\frac{(d-1)^2+c^2}{4d} \ \hbox{and} \ B=\frac{(d+1)^2+c^2}{4d}. \nonumber \end{equation} }
The multiplicative loops $Q^{\ast }$ of the quasifields $Q_{p,q,c,d}$ are not decomposable and quasi-simple.

\smallskip
\noindent
b) $\Lambda _{Q^{\ast }}$ has the form (\ref{diagonal}) with
{\tiny \begin{equation} \label{bjram} a(1,u)=\sqrt{(\cos {nt} + c \sin {nt})^2+ d^2 \sin^2{nt}}, \
b(1,u)= \frac{\sin {nt} \cos{nt}(d^2-1-c^2)-c \sin^2{nt}(d^2+1+c^2)}{d \sqrt{(\cos {nt} + c \sin {nt})^2+ d^2 \sin^2{nt}}} \nonumber \end{equation} }
such that
{\tiny \begin{equation} \label{bjramcos}
r \cos u=\frac{s(\cos{nt} \cos{mt}+c \sin{nt} \cos{mt} +d \sin{nt} \sin{mt})}{\sqrt{(\cos {nt} + c \sin {nt})^2+ d^2 \sin^2{nt}}}, \
r \sin u=\frac{s(d \sin{nt} \cos{mt} -\cos{nt} \sin{mt}- c \sin {nt} \sin{mt})}{\sqrt{(\cos {nt} + c \sin {nt})^2+ d^2 \sin^2{nt}}} \nonumber \end{equation} }
and $s \ge 0$.
\newline
\noindent
The quasifields $Q_{m,n,c,d}$ coordinatize a family of planes $\mathcal{A}_{m,n,c,d}$ such that the parameters
$m,n \in \mathbb Z$, $(m,n)=1$, $c,d \in \mathbb R$ satisfy the  conditions
{\tiny \begin{equation}
\begin{array}{lcl}
m=n=1 & & \hbox{and} \ \ -1 \le d <0 \\
m=1,2,3, \cdots & n=m+1 & \hbox{and} \ \ d >0 \\
m=1,3,5, \cdots & n=m+2 & \hbox{and} \ \ d >0 \end{array} \nonumber \end{equation}
\begin{equation}
(n-m)^2 B \ge (n+m)^2 A, \ \hbox{where} \ A=\frac{(d-1)^2+c^2}{4d} \ \hbox{and} \ B=\frac{(d+1)^2+c^2}{4d}.
  \nonumber \end{equation} }
The loops $Q^{\ast }_{m,n,c,d}$  are split extensions of the normal subgroup
$\widetilde{N^{\ast }} \cong \mathbb R$ corresponding to the connected component of $\left\{ \left( \begin{array}{cc}
u & 0 \\
0 & u \end{array} \right), 0 \neq u \in \mathbb R \right\}$ with a subloop homeomorphic to the $1$-sphere.
\end{Prop}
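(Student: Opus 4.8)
The plan is to follow the scheme already used in Propositions \ref{prop1}--\ref{prop4}: first convert the geometric hypothesis into Betten's normalized spreads, then express these as the image of the section (\ref{section}), and finally decide decomposability by the criterion of Proposition \ref{productloop}. The hypothesis that the translation complement $C$ fixes $S$ and $W$ and operates transitively on both $P_S$ and $P_W$ singles out, via Betten's classification (cf. \cite{betten5}, \cite{betten7}), two families of spreads according to whether $C$ is also transitive on the product $P_S\times P_W$ or not. I would identify the first family with case a) and the second with case b); in either case the spread matrices lie in the connected component ${\mathrm{GL}}_2^+(\mathbb R)$, so each of them is the left translation $M_{(x,y)}$ of a point of $Q^{\ast}$.

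The computational heart is to put every spread matrix $M$ uniquely in the form
\begin{equation}
M=\left( \begin{array}{cc}
u\cos t & u\sin t \\ -u\sin t & u\cos t \end{array}\right)
\left( \begin{array}{cc}
a & b \\ 0 & a^{-1} \end{array}\right),\qquad u>0,\ a>0,\ b\in\mathbb R,\ t\in[0,2\pi), \nonumber
\end{equation}
which is exactly the decomposition underlying (\ref{section}); reading off its first column identifies the point $(x,y)^{t}=(ua\cos t,-ua\sin t)^{t}$ together with the functions $a,b$. Carrying out this factorization on the two Betten families, and renormalizing so that $(1,0)^{t}$ becomes the identity, should reproduce exactly the closed forms for $a(r,u)$, $b(r,u)$ and for $r\cos u$, $r\sin u$ displayed in a) and b); the listed inequalities on $p,q,c,d$, respectively on $m,n,c,d$, are precisely the conditions $(M_1)$, $(M_2)$ ensuring that the matrices form a spread. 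I expect this factorization, together with the bookkeeping translating Betten's variables into $r$, $u$ and into the stated parameters, to be the main obstacle, since it is here that all the elaborate expressions are produced.

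Once $a$ and $b$ are known the structural assertions are routine. For case a) I would apply the decomposability test of Proposition \ref{productloop}: evaluating $a(r,u)$ at a convenient output angle, so that $a$ collapses to a function of a single Betten parameter, exhibits a genuine dependence on the radial variable which is incompatible with the identity $a(r,t+k\pi)=a(1,t)\,a(r,k\pi)$, exactly as in the proofs of Propositions \ref{prop3} and \ref{prop4}. Hence $Q^{\ast}$ is not decomposable, and by Theorem \ref{quasisimple}, in contrapositive form, it is quasi-simple.

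For case b) I would instead observe that the displayed $a(1,u)$ and $b(1,u)$ depend only on the angular variable, through $nt$, whereas the radial parameter $s\ge 0$ enters $r\cos u$ and $r\sin u$ only as an overall positive scalar; moreover $a(1,k\pi)=1$ and $b(1,k\pi)=0$. Thus $\Lambda_{Q^{\ast}}$ has precisely the product shape (\ref{diagonal}), which is the normal form attached to a loop carrying a normal subloop of positive dimension. Theorem \ref{normal} then yields that $Q^{\ast}_{m,n,c,d}$ is a split extension; passing to the connected component of the kernel gives the normal factor $\widetilde{N^{\ast}}\cong\mathbb R$ together with a complementary factor homeomorphic to the $1$-sphere, which completes the proof.
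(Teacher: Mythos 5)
Your proposal is correct and follows essentially the same route as the paper: identify the two Betten families (Satz 3 and Satz 4 of \cite{betten5}) according to transitivity on $P_S\times P_W$, rewrite the normalized spreads as images of the section (\ref{section}) (respectively as the set (\ref{diagonal})), then in case a) refute the identity $a(r,t+k\pi)=a(1,t)a(r,k\pi)$ of Proposition \ref{productloop} by a suitable specialization (the paper takes $s=0$) and invoke Theorem \ref{quasisimple}, and in case b) invoke Theorem \ref{normal} as in Proposition \ref{prop1} a). No substantive differences.
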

\begin{proof}
If the translation complement $C$ acts transitively on the product space $P_S \times P_w$, then there is a family of translation planes corresponding to the normalized spreads
\begin{equation} \label{spread1uj}
\{S, W \} \cup \left\{ \left( \begin{array}{cc}
\frac{\alpha (s,t)+ e^{q \pi}}{1+ e^{q \pi}} & \frac{\gamma (s,t)-c \alpha (s,t)}{d(1+ e^{q \pi})} \\
\frac{\beta (s,t)}{1+ e^{q \pi}} & \frac{\delta (s,t)-c \beta (s,t)+ d e^{q \pi}}{d(1+ e^{q \pi})} \end{array} \right), s,t \in \mathbb R  \right\} \nonumber
\end{equation}
such that
$\alpha (s,t)= e^{qt-ps} (\cos{s} \cos{t}+c \sin{t} \cos{s} +d \sin{t} \sin{s})$,
\newline
\noindent
$\beta (s,t)= e^{qt-ps} (d \cos{s} \sin{t} -\sin{s} \cos{t}- c \sin {s} \sin{t})$,
\newline
\noindent
$\gamma(s,t)= e^{qt-ps} (d \cos{t} \sin{s} -\sin{t} \cos{s}+ c \cos {t} \cos{s})$,
\newline
\noindent
$\delta(s,t)= e^{qt-ps} (d \cos{t} \cos{s} +\sin{t} \sin{s}- c \cos {t} \sin{s})$  (cf. \cite{betten5}, \ \hbox{Satz 3}, pp. 135-136). These spreads coincide with the image of the section $\sigma $ in (\ref{section})
with the well defined  functions $a(r,u)$ and $b(r,u)$ as in assertion a).
For $s=0$ we get a contradiction to the condition $a(r_j,u_j)= a(r_j,0) a(1, u_j)$ which must hold for decomposable $Q^{\ast }$.
It follows that $Q^{\ast }$ is not decomposable and hence quasi-simple (cf. Theorem \ref{quasisimple}).

\noindent
If the translation complement $C$ does not act transitively on the product space $P_S \times P_W$, then there is a family of translation planes which correspond to the normalized spreads
\begin{equation} \label{spread1r}
\{S, W \} \cup \left\{ \left( \begin{array}{cc}
s & 0 \\
0 & s \end{array} \right) \left( \begin{array}{cc}
a_{11}(t) & -\frac{c}{d} a_{11}(t)+ \frac{1}{d} a_{21}(t) \\
a_{12}(t) & -\frac{c}{d} a_{12}(t)+ \frac{1}{d} a_{22}(t) \end{array} \right), s \ge 0, t \in \mathbb R \right\} \nonumber \end{equation}
with
$a_{11}(t)= \cos{nt} \cos{mt}+c \sin{nt} \cos{mt} +d \sin{nt} \sin{mt}$,
\newline
\noindent
$a_{12}(t)= d \sin{nt} \cos{mt} -\cos{nt} \sin{mt}- c \sin {nt} \sin{mt}$,
\newline
\noindent
$a_{21}(t)= d \cos{nt} \sin{mt} -\sin{nt} \cos{mt}+ c \cos {nt} \sin{mt}$,
\newline
\noindent
$a_{22}(t)= d \cos{nt} \cos{mt} +\sin{nt} \sin{mt}- c \cos {nt} \sin{mt}$  (cf. \cite{betten5}, \ \hbox{Satz 4}, pp. 142-144).
These spreads coincide with the set $\Lambda_{Q^{\ast}}$ in (\ref{diagonal}) such that the periodic functions $a(1,t)$ and $b(1,t)$ are given in assertion b). As in the proof of Proposition \ref{prop1} a) it follows that
the loop $Q^{\ast }_{m,n,c,d}$  is a split extension as in the assertion.
\end{proof}

\begin{Coro}
 Let $\mathcal{A}$ be a  $4$-dimensional locally compact non-desarguesian topological plane which admits an at least $7$-dimensional collineation group $\Gamma$. If the quasifield $Q$ coordinatizing  $\mathcal{A}$  is constructed with respect to two lines such that their intersection points with the line at infinity are contained in the $1$-dimensional orbit of $\Gamma $ or contain the set of the fixed points of $\Gamma$, then for the multiplicative loop $Q^\ast$ of  $Q$  one of the following holds:
\newline
\noindent
a) $Q^{\ast }$ is quasi-simple and not decomposable. Such quasifields $Q$ are described by Propositions \ref{prop1} b), \ref{prop1} c), \ref{prop2}), \ref{prop3}), \ref{prop4}) and in Proposition \ref{prop5} a).
\newline
\noindent
b) $Q^{\ast }$ is quasi-simple but decomposable and it is a product $SO_2(\mathbb R) B$, where $B$ is a $1$-dimensional loop homeomorphic to
$\mathbb R$. The quasifields $Q$ of this type are described in Proposition \ref{7dimSO2R}.
\newline
\noindent
c) $Q^{\ast }$ is a split extension of the group $\widetilde{N^{\ast }} \cong \mathbb R$ with a loop homeomorphic to the 
$1$-sphere.
The quasifields of this type are described in Propositions \ref{prop1} a) and \ref{prop5} b).
\end{Coro}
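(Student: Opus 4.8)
The plan is to assemble the structural results already obtained for each configuration and to sort them into the three announced types; the Corollary is essentially a synthesis of Propositions \ref{prop1}, \ref{prop2}, \ref{prop3}, \ref{prop4}, \ref{7dimSO2R} and \ref{prop5} together with Theorems \ref{normal} and \ref{quasisimple}. First I would invoke the description of Section 6: under the standing hypotheses $\mathcal{A}$ is one of the $4$-dimensional non-desarguesian planes with an at least $7$-dimensional collineation group classified by Betten, and the requirement that the two coordinatizing lines meet the line at infinity either in the $1$-dimensional orbit of $\Gamma$ or in its fixed points is precisely what forces the normalized spread realizing $Q$ to be one of those treated in the quoted propositions. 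Consequently $\Lambda_{Q^*}$ has one of the forms appearing there, and the list is exhaustive.

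Next I would read off from each proposition its already established conclusion and collect them. The cases \ref{prop1} b), \ref{prop1} c), \ref{prop2}, \ref{prop3}, \ref{prop4} and \ref{prop5} a) all assert that $Q^*$ is not decomposable and quasi-simple, which is type a). The cases \ref{prop1} a) and \ref{prop5} b) assert that $Q^*$ is a split extension of the connected normal subgroup $\widetilde{N^*} \cong \mathbb{R}$ by a subloop homeomorphic to the $1$-sphere; such a loop has a normal subloop of positive dimension, so it is not quasi-simple, and this is type c). The remaining configurations are exactly those of Proposition \ref{7dimSO2R}, which will furnish type b).

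The only point still requiring an argument is that the loops of Proposition \ref{7dimSO2R} are \emph{quasi-simple} and decompose as $\mathrm{SO}_2(\mathbb{R})\,B$ with $B$ homeomorphic to $\mathbb{R}$, since that proposition records only that they are decomposable and contain $\mathrm{SO}_2(\mathbb{R})$. Here I would apply Theorem \ref{normal}: were $Q^*$ not quasi-simple, one would need $a(r,k\pi)=1$ and $b(r,k\pi)=0$ for all $r>0$ and $k \in \{0,1\}$. But in case a) one has $a(r,0)=r^{\frac{1-w}{1+w}}$ with $w \neq 1$, and in case b) one has $b(r,0)=\frac{\ln r}{d}$, which is not identically zero; in either case the conditions of Theorem \ref{normal} fail, so $Q^*$ is quasi-simple. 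Since here $a(u,t)$ and $b(u,t)$ are independent of $t$, the compact factor in the form (\ref{lambda}) is exactly $\mathrm{SO}_2(\mathbb{R})$, while the second factor $B$ corresponds to the connected component of the kernel $K_r \cong \mathbb{R}$ and is therefore homeomorphic to $\mathbb{R}$; as $-I \in \mathrm{SO}_2(\mathbb{R})$, the product $\mathrm{SO}_2(\mathbb{R})\,B$ already exhausts $Q^*$.

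I expect the main obstacle to be not computational but organizational: the genuinely delicate step is checking that the hypothesis on the two defining lines matches the orbit and fixed-point conditions under which Betten's normalized spreads, and hence Propositions \ref{prop1}--\ref{prop5} and \ref{7dimSO2R}, were derived, so that the above case list is both exhaustive and mutually exclusive. Once this correspondence is secured, types a), b) and c) follow by the bookkeeping described, the trichotomy being governed by quasi-simplicity and, within the quasi-simple loops, by decomposability.
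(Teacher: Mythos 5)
Your sorting of the cases and your verification that the loops of Proposition \ref{7dimSO2R} are quasi-simple are both correct; the latter (via Theorem \ref{normal}, using that $a(r,0)=r^{\frac{1-w}{1+w}}$ with $w\neq 1$, respectively $b(r,0)=\frac{\ln r}{d}$, is not identically trivial) is a point the paper's own proof leaves implicit, and it is a worthwhile addition.

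The genuine gap is the step you explicitly postpone at the end: securing the correspondence between the quasifield $Q$ of the hypothesis and the normalized quasifields of Section 6. This is not mere organization --- it is essentially the entire content of the paper's proof. A quasifield constructed with respect to two lines whose points at infinity lie in the $1$-dimensional orbit of $\Gamma$ or among its fixed points is in general only \emph{isotopic} to a quasifield of Betten's classification (cf. \cite{grundhoefer}, p.~321, and \cite{betten}, Satz 5), since the coordinatization involves further choices beyond the two lines; hence the conclusions of Propositions \ref{prop1}--\ref{prop5} and \ref{7dimSO2R} do not apply to $Q^{\ast}$ verbatim. The paper bridges this by noting that for isotopic loops the group generated by the left translations, every subgroup of it and all nuclei are isomorphic (\cite{loops}, Lemmata 1.9, 1.10), from which it deduces that quasi-simplicity, (non-)decomposability, containment of ${\mathrm{SO}}_2(\mathbb{R})$ and being a split extension of $\widetilde{N^{\ast}}$ by a $1$-dimensional compact loop are all preserved under isotopy. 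Without some such transfer argument, your case analysis establishes the trichotomy only for the specific normalized quasifields appearing in the propositions, not for every $Q$ satisfying the hypothesis of the Corollary.
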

\begin{proof}
A locally compact topological quasifield coordinatizing the translation plane $\mathcal{A}$ and constructed with respect to two lines satifying  the assumptions is
isotopic to a quasifield given in Betten's classification (cf. \cite{grundhoefer}, p. 321, \cite{betten} Satz 5). For isotopic loops $Q_1^{\ast }$ and $Q_2^{\ast }$ the following holds:
The group generated by their left translations, every subgroup and all nuclei of them are isomorphic (cf. \cite{loops}, Lemmata 1.9, 1.10, p. 20). From these facts we get: If $Q_1$ is quasisimple and not decomposable, then also $Q_2$ is quasisimple and not decomposable.
If $Q_1$ contains the subgroup $SO_2(\mathbb R)$, then also $Q_2$ contains the group $SO_2(\mathbb R)$. If $Q_1$ is a split extension of $\widetilde{N^{\ast }}$ with a $1$-dimensional compact loop, then the same holds for $Q_2$.
\end{proof}

\medskip
\noindent
{\bf Acknowledgement.}
We are particularly grateful to the referee for his suggestions. This paper was supported by the J\'anos Bolyai Research Fellowship and
the European Union's Seventh Framework Programme (FP7/2007-2013) under grant agreements no. 317721, no. 318202.


\begin{thebibliography}{37}

\bibitem{andre} Andr\'e, J. \"Uber nicht-Desarguessche Ebenen mit transi\-ti\-ver Trans\-lationsgruppe. \textit{Math. Z.} 60:156-186.

\bibitem{betten1} Betten, D. Nicht-desarguessche $4$-dimensionale Ebenen. \textit{Arch. Math.} 21:100-102.

\bibitem{betten} Betten, D. $4$-dimensionale Translationsebenen. \textit{Math. Z.} 128:129-151.

\bibitem{betten2} Betten, D. $4$-dimensionale Transla\-ti\-ons\-ebe\-nen mit $8$-dimensionaler Kol\-li\-nea\-tions\-gruppe. \textit{Geom. Ded.} 2:327-339.

\bibitem{betten3} Betten, D. $4$-dimensionale Translationsebenen mit irreduzibler Kol\-li\-nea\-tions\-gruppe. \textit{Arch. Math.} 24:552-560. \textit{Math. Z.} 132:249-259.

\bibitem{betten4} Betten, D. 4-dimensionale Translationsebenen mit genau einer Fixrichtung. \textit{Deom. Ded.} 3:405-440.

\bibitem{betten5} Betten, D. 4-dimensionale Translationsebenen mit 7-dimensionaler Kol\-li\-neationsgruppe. \textit{J. Reine Angew. Math.} 285:126-148.

\bibitem{betten6} Betten, D. 4-dimensionale Translationsebenen mit kommutativer Standgruppe. \textit{Math. Z.} 154:125-141.

\bibitem{betten7} Betten, D. (1997) On the classification of 4-Dimensional Flexible Projective Planes. In: Johnson, N. L., ed. {\textit Mostly finite geometries.} Decker, pp. 9-33.

\bibitem{figulastrambach}  Figula, \'A.,  Strambach, K. Loops on spheres having a compact-free inner mapping group. \textit{Monatsh. Math.} 156:123-140.

\bibitem{grundhoefer}  Grundhöfer, T.,  Salzmann, H. H. (1990). Locally compact double loops and ternary fields. In: Chein, O., et al., ed. \textit{Quasigroups and Loops: Theory and Applications.}  Heldermann, pp. 313-355.


\bibitem{hughes}  Hughes, D. R., Piper, F. C. (1973). Projective Planes. Springer.


\bibitem{knarr} Knarr,  N. (1995). Translation Planes. Springer.

\bibitem{loops} Nagy,  P. T., Strambach, K. (2002). Loops in group theory and Lie theory. Walter de Gruyter.


\bibitem{ortleb} Ortleb,  S.  A new family of locally compact $4$-dimensional translation planes admitting a $7$-dimensional collineation group. \textit{Adv. Geom.} 9:1-12.


\bibitem{pickert}  Pickert, G. (1955) Projektive Ebenen. Springer.


\bibitem{strambachplaumann} Plaumann, P.,  Strambach, K. Zweidimensionale Quasialgebren mit Nullteilern. \textit{Aequationes Math.} 15:249-264.


\bibitem{salzmann} Salzmann, H., Betten, D.,  Grundh\"ofer, T., H\"ahl, H.,  L\"owen, R., Stroppel, M. (1995). Compact projective planes.
Walter de Gruyter.


\bibitem{walter} Walter,  W. (1970).  Differential and Integral Inequalities. Springer.

\end{thebibliography}
\end{document}